\providecommand{\U}[1]{\protect\rule{.1in}{.1in}}
\providecommand{\U}[1]{\protect\rule{.1in}{.1in}}
\newtheorem{theorem}{Theorem}
\newtheorem{definition}{Definition}
\newtheorem{lemma}{Lemma}
\newtheorem{proposition}{Proposition}
\newtheorem{remark}{Remark}
\newenvironment{proof}[1][Proof]{\noindent\textbf{#1.} }{\ \rule{0.5em}{0.5em}}
\begin{document}

\title{Efficient Simulation and Conditional Functional Limit Theorems for Ruinous
Heavy-tailed Random Walks}
\author{Jose Blanchet \thanks{Research supported in part by DMS-0902075 and
CMMI-0846816}\quad and Jingchen Liu \thanks{Research supported in part by
Institute of Education Sciences, U.S. Department of Education, through Grant
R305D100017}}
\maketitle

\begin{abstract}
The contribution of this paper is to introduce change of measure based
techniques for the rare-event analysis of heavy-tailed stochastic processes.
Our changes-of-measure are parameterized by a family of distributions admitting a mixture form. We exploit our methodology to achieve two types
of results. First, we construct Monte Carlo estimators that are strongly
efficient (i.e. have bounded relative mean squared error as the event of
interest becomes rare). These estimators are used to estimate both rare-event
probabilities of interest and associated conditional expectations. We
emphasize that our techniques allow us to control the expected termination
time of the Monte Carlo algorithm even if the conditional expected
stopping time (under the original distribution) given the event of interest is infinity -- a situation that
sometimes occurs in heavy-tailed settings. Second, the mixture family serves
as a good approximation (in total variation) of the conditional distribution
of the whole process given the rare event of interest. The convenient form of
the mixture family allows us to obtain, as a corollary, functional conditional
central limit theorems that extend classical results in the literature. We
illustrate our methodology in the context of the ruin probability $P(\sup
_{n}S_{n}>b)$, where $S_{n}$ is a random walk with heavy-tailed increments
that have negative drift. Our techniques are based on the use of Lyapunov
inequalities for variance control and termination time. The conditional limit
theorems combine the application of Lyapunov bounds with coupling arguments.


\end{abstract}

\section{Introduction}

Change-of-measure techniques constitute a cornerstone in the large
deviations analysis of stochastic processes (see for instance
\cite{DEMZEI98}). In the light-tailed setting, it is well understood that a
specific class of changes-of-measure, namely exponential tilting, provide
just the right vehicle to perform not only large deviations analysis but also
to design provably efficient importance sampling simulation estimators. There
is a wealth of literature on structural results, such as conditional limit
theorems, that justify the use of exponential changes of measure in these
settings (see for instance \cite{ASM82,ASMRUB95} in the setting of random walks and \cite{DupWang08} in the context of networks).

Our contribution in this paper is the introduction of change-of-measure techniques for the rare-event analysis of heavy-tailed stochastic processes.
Our general motivation is to put forward tools that allow to perform both,
large deviations analysis for heavy-tailed systems and, at the same time, construction of
efficient Monte Carlo algorithms for estimation of rare events, in the same
spirit as in light-tailed settings. To this end, we
introduce a family of changes of measures that are parameterized by a
mixture of finitely many distributions and develop mathematical tools for their analyses. We
concentrate on a class of problems of interest both in queueing theory and
risk theory, namely first passage time probabilities for random walks, which
serve as a good stylized model for testing and explaining
techniques at the interface of large deviations and simulation.
For instance, the first paper (\cite{SIE76}) that introduced the notations of efficiency together with the application of light-tailed large deviations ideas and exponential changes-of-measure, focused on this class of model problems. Such notations are now standard in rare-event simulation.
In the heavy-tailed setting, first passage time problems for random walks also serve as an environment for explaining the challenges that arise when
trying to develop efficient importance sampling estimators (see
\cite{ASMBINHOJ00}). We will provide additional discussion on those challenges
and contrast our methods here with recent approaches that have been developed
for first passage time problems for heavy-tailed random walks. We will illustrate the flexibility of our method in terms of simulation
estimators that have good variance performance and good control on the cost per
replication of the simulation estimator. The proposed change of measure also satisfies structural results
(in the form of conditional limit theorems) in the spirit of the theory that
has been developed in light-tailed environments. Let us introduce
the setup that will be the focus of our paper.

Let $S=\{S_{n}:n\geq0\}$ be a random walk with independently and identically
distributed (i.i.d.) increments, $\{X_{n}:n\geq1\}$, that is, $S_{n+1}%
=S_{n}+X_{n+1}$ for all $n\geq0$ and $S_{0}=0$. We assume that $\mu=EX_{n}<0$
and that the $X_{n}$'s are suitably heavy-tailed (see Section \ref{SectionMainResults}). For each $b\in\mathbb{R}%
^{+}$, let $\tau_{b}=\inf\{n\geq1:S_{n}>b\}$. Of interest in this paper is the
first passage time probability\footnote{If $S_{0}=0$ we use $P\left(  \cdot\right)  $ and
$E\left(  \cdot\right)  $ to denote the associated probability measure and
expectation operators in path space, respectively. If $S_{0}=s$, then
we write $P_{s}\left(  \cdot\right)  $ and $E_{s}\left(  \cdot\right)  $.}
\begin{equation}
u(b)=P(\tau_{b}<\infty), \label{Cd1}%
\end{equation}
and the conditional distribution of the random walk given $\{\tau_{b}%
<\infty\}$, namely
\begin{equation}
P(S\in\cdot|\tau_{b}<\infty). \label{Cd}%
\end{equation}

This paper introduces a family of unbiased simulation estimators for
$u\left(  b\right)  $ that can be shown to have bounded coefficient of
variation uniformly over $b>0$. The associated sampling distribution approximates (\ref{Cd}) in
total variation as $b\rightarrow\infty$. Unbiased estimators with
bounded coefficient of variation are called \emph{strongly efficient estimators}
in rare event simulation (Chapter 6 in \cite{ASMGLY07}).

The construction of provably efficient importance sampling estimators has been
the focus of many papers in the applied probability literature. A natural idea
behind the construction of efficient importance sampling estimators is that
one should mimic the behavior of the zero variance change of measure, which
coincides precisely with the conditional distribution (\ref{Cd}).
As it is well known, heavy-tailed large deviations are often
governed by the \textquotedblleft principle of the big jump", which,
qualitatively speaking, indicates that asymptotically as $b\rightarrow\infty$
the event of interest (in our case $\{\tau_{b}<\infty\}$) occurs due to the
contribution of a single large increment of size $\Omega(b)$.\footnote{For
$f(\cdot)$ and $g(\cdot)$ non-negative we use the notation $f(b)=O(g(b))$ if
$f(b)\leq cg(b)$ for some $c\in(0,\infty)$. Similarly, $f(b)=\Omega(g(b))$ if
$f(b)\geq cg(b)$ and we also write $f(b)=o(g(b))$ as $b\rightarrow\infty$ if
$f(b)/g(b)\rightarrow0$ as $b\rightarrow\infty$.} Consequently, the principle
of the big jump naturally suggests to mimic the zero variance
change of measure by a distribution which assigns zero probability to the event that ruin occurs due to the contribution
of more than one large jump of order $\Omega(b)$. However, such an importance sampling strategy is not feasible because it violates absolute continuity requirements to define a likelihood ratio. This is the
most obvious problem that arises in the construction of efficient importance
sampling schemes for heavy-tailed problems. A more subtle problem
discussed in \cite{ASMBINHOJ00} is the fact that the second moment of an
importance sampling estimator for heavy-tailed large deviations is often very
sensitive to the behavior of the likelihood ratio precisely on paths that
exhibit more than one large jump for the occurrence of the rare event in
question. We shall refer to those paths that require more than one large jump
for the occurrence of the event $\tau_{b}<\infty$ \emph{rogue paths}.

In the last few years state-dependent importance sampling has been used as a
viable way to construct estimators for heavy-tailed rare-event simulation. A natural idea is to exploit the
Markovian representation of (\ref{Cd}) in terms of the so-called Doob's
h-transform. In particular, it is well known that
\begin{equation}
P(X_{n+1}\in dx|S_{n},n<\tau_{b}<\infty)=\frac{u(b-S_{n}-x)}{u(b-S_{n}%
)}F\left(  dx\right)  , \label{Gold}%
\end{equation}
where $F$ is the distribution of $X_{n+1}$. In \cite{BlaGly07},
a state dependent importance sampling estimator based on an approximation to (\ref{Gold}) is constructed and a technique based on Lyapunov inequalities was introduced for variance control. In
particular, by constructing a suitable Lyapunov function, in \cite{BlaGly07},
it is shown that if $v(b-s)$ is a suitable approximation to $u(b-s)$ as
$b-s\nearrow\infty$ and $w(b-s)=Ev(b-s-X)$ then simulating the increment
$X_{n+1}$ given $S_{n}$ and $\tau_{b}>n$ via the distribution
\begin{equation}
\widetilde{P}\left(  X_{n+1}\in dx|S_{n}\right)  =\frac{v(b-S_{n}%
-x)}{w(b-S_{n})}F(dx) \label{BG08}%
\end{equation}
provides a strongly efficient estimator for $u\left(  b\right)  $. This
approach provided the first provably efficient estimator for $u\left(
b\right)  $ in the context of a general class of heavy-tailed increment
distributions, the class $S^{\ast}$,
which includes in particular Weibull and regularly varying distributions.
Despite the fact that the importance sampling strategy induced by (\ref{BG08})
has been proved to be efficient in substantial generality, it has a few
inconvenient features. First, it typically requires to numerically evaluate
$w\left(  b-S_{n}\right)  $ for each $S_{n}$ during the course of the
algorithm. Although this issue does not appear to be too critical in the one
dimensional setting (see the analysis in \cite{BLLI08}), for higher dimensional
problems, the numerical evaluation of $w\left(  b-S_{n}\right)  $
could easily require a significant computational overhead. For instance, see the first passage time computations for
multiserver queues, which have been studied in the regularly varying case in \cite{BlaGlyLiu07}.
The second
inconvenient feature is that if the increments have finite
mean but infinite variance we obtain $E\left(  \tau_{b}|\tau_{b}%
<\infty\right)  =\infty$. The strategy of mimicking the conditional
distribution without paying attention to the cost per replication of the
estimator could yield a poor overall computational complexity. Our proposed
approach does not suffer from this drawback because our parametric family of
changes of measures allows to control both the variance and the termination time.

We now proceed to explicitly summarize the contributions of this paper.
Further discussion will be given momentarily and precise mathematical
statements are given in Section \ref{SectSumResults}.

\begin{enumerate}
\item We provide a strongly efficient estimator (i.e. bounded relative mean
squared error as $b\nearrow\infty$) to compute the rare event probabilities
$u(b)$ and the associated conditional expectations, based on a finite
mixture family, for which both the simulation and density evaluation are
straightforward to perform (see Theorem \ref{ThmSE1}). Several features of the algorithm include:

\begin{enumerate}
\item The results require the distribution to have an eventually concave
cumulative hazard function, which includes a large class of distributions including
regularly varying, Weibull distribution, log-normal distribution and so forth
(see assumptions in Section \ref{SectionMainResults}).

\item One feature of the proposed algorithm relates to the
termination time. When the increments are regularly varying with tail index
$\iota\in(1,2)$, $E(\tau_{b}|\tau_{b}<\infty)=\infty$. This implies that the
zero-variance change of measure takes infinity expected time to generate one
sample. In contrast, we show that the proposed importance sampling algorithm
takes $O(b)$ expected time to generate one sample while still maintaining
strong efficiency if $\iota\in(1.5,2)$ --Theorem \ref{ThmSEb}.

\item For the case that $\iota\in(1,1.5]$, we show that the
$(1+\gamma)$-th moment of the estimator is of order $O(u^{1+\gamma}(b))$ with $\gamma>0$ depending on $\iota$. In addition, the expected termination time of the algorithm is $O(b)$ (Theorem \ref{Thm1_pls_Gamma}). Therefore, to compute $u(b)$ with
$\varepsilon$ relative error and at least $1-\delta$ probability, the total
computation complexity is $O(b)$.
\end{enumerate}

\item The mixture family approximates the conditional distribution of the
random walk given ruin in total variation. Based on this strong approximation
and on the simplicity of the mixture family's form we derive a conditional functional
central limit theorem of the random walk given ruin, which further
extends existing results reported in \cite{AK96} (compare Theorems
\ref{THAK}, \ref{ThmTVA} and \ref{ThmTVD} below).
\end{enumerate}

\bigskip

As mentioned earlier, the simulation estimators proposed in this paper are
based on importance sampling and they are designed to directly
mimic the conditional distribution of $S$ given $\tau_{b}<\infty$
based on the principle of the big jump. This principle suggests that one
should mimic the behavior of such a conditional distribution at each step by a
mixture of two components: one involving an increment distribution that is
conditioned to reach level $b$ and a second one corresponding to a nominal
(unconditional) increment distribution. This two-mixture sampler,
which was introduced by \cite{DULEWA06} in the context of tail estimation of a
fixed sum of heavy-tailed random variables, has been shown to produce strongly
efficient estimators for regularly varying distributions
\cite{DULEWA06,BGL07,BL08,BL10}. However, two-component
mixtures are not suitable for the design of strongly efficient estimators in
the context of other types of heavy-tailed distributions. In
particular, two-component mixtures are not applicable to 
semiexponential distributions (see \cite{BorMog06} for the definition)
such as Weibull.

As indicated, one of our main contributions in this paper is to introduce a
generalized finite-mixture sampler that can be shown to be suitable for
constructing strongly efficient estimators in the context of a general class
of heavy-tailed distributions, beyond regularly varying tails and including
lognormals and Weibullian-type tails. Our mixture family also mimics the
qualitative behavior mentioned above; namely, there is the contribution of a
large jump and the contribution of a regular jump. In addition, one needs to control the behavior of the likelihood ratio
corresponding to rogue sample paths. Depending on the degree
of concavity of the cumulative hazard function (which we assume to be eventually
strictly concave) we must interpolate between the large jump component and the
nominal component in a suitable way. At the end, the number of mixtures is
larger for cumulative hazard functions that are less concave.

Our mixture family and our Lyapunov based
analysis allow to obtain an importance sampling scheme that achieves strong
efficiency and \textit{controlled expected termination time even if the optimal (in terms
of variance minimization) change of measure involves an infinite expected
termination time}. More precisely, if the increment distribution is regularly
varying with tail index $\iota\in(1,2)$ it follows using the Pakes-Veraberbeke
theorem (see Theorem \ref{ThmPV}) that
\begin{align*}
E(\tau_{b}|\tau_{b}  &  <\infty)=\sum_{n=0}^{\infty}\frac{P\left(  \tau
_{b}>n,\tau_{b}<\infty\right)  }{P\left(  \tau_{b}<\infty\right)  }\\
&  \geq\sum_{n=1}^{\infty}\frac{P\left(  \tau_{b-\mu n/2}<\infty\right)
P\left(  |S_{n}+n\mu|\leq n\left\vert \mu\right\vert /2\right)  }{P\left(
\tau_{b}<\infty\right)  }=\infty.
\end{align*}
Nevertheless, as we will show, if $\iota\in(  1.5,2]  $ we can
choose the mixture parameters (which are state-dependent) in such a way that
(using $E^{Q}\left(  \cdot\right)  $ to denote the probability measure induced
by our importance sampling strategy assuming $S_{0}=0$)%
\begin{equation}
E^{Q}\tau_{b}=O\left(  b\right)  \label{ObExp}%
\end{equation}
while maintaining strong efficiency. We believe this feature is surprising!
In particular, it implies that one can construct a family of estimators for
expectations of the form $E(H(  S_{k}:k\leq\tau_{b}
)|\tau_{b}<\infty)$ that requires overall $O\left(  b\right)  $ random numbers
generated uniformly over a class of functions such that $0<K_{0}\leq H\leq
K_{1}<\infty$, even if $E(\tau_{b}|\tau_{b}<\infty)=\infty$. We shall also informally
explain why $\iota>1.5$ appears to be a necessary condition in
order to construct an unbiased estimator satisfying both strong efficiency and
(\ref{ObExp}).

In addition, for the case that $\iota\in(1,1.5]$, we are able to construct an
estimator whose $(1+\gamma)$-th moment (for $ 0 < \gamma < (\iota-1)/(2-\iota)$) is of order $O(u^{1+\gamma}(b))$ while
the expected termination time is $O(b)$. We will also argue that the bound on $\gamma$ is essentially optimal. Consequently, as it is shown in Theorem \ref{Thm1_pls_Gamma}, to
compute $u(b)$ with $\varepsilon$ relative error and at least $1-\delta$
probability, the total computational complexity is $O(b)$.

In addition to providing a family of strongly efficient estimators for
$u\left(  b\right)  $, our finite-mixture family can
approximate the conditional measure (\ref{Cd}) in total variation as
$b\nearrow\infty$. This approximation step further strengthens
our family of samplers as a natural rare-event simulation scheme for heavy-tailed systems.
Moreover, given the strong mode of convergence and
because the mixture family admits a friendly form, we are able to strengthen
classical results in the literature on heavy tailed approximations, see
\cite{AK96}. For instance, if a given increment has second moment, we will
derive, as a corollary of our approximations, a conditional functional central
limit theorem up to the first passage time $\tau_{b}$. Thereby, this improves
the law of large numbers derived in \cite{AK96}. Another related result in the
setting of high dimension regularly varying random walk is given in
\cite{HLMS05}. We believe that the proof techniques behind our approximations,
which are based on coupling arguments, are of independent interest and that
they can be used in other heavy-tailed environments.

A central technique in the analysis of both the computational complexity and our conditional limit theorems is the use of
Lyapunov functions. The Lyapunov functions are used for three different
purposes: First in showing the strong efficiency of the importance sampling
estimator, second in providing a bound on the finite expected termination time
of the algorithm, and finally in proving the approximation in total variation
of the zero-variance change of measure. The construction of Lyapunov functions
follows the so called fluid heuristic, which is well known in the literature
of heavy-tailed large deviations and has also been successfully applied in
rare event simulation, see \cite{BGL07,BL08,BL10}.

This paper is organized as follows. In Section \ref{SectionMainResults}, we
introduce our assumptions, our family of changes of measures and we provide
precise mathematical statements of our main results. Section \ref{SecPre}
discusses some background results on large deviations and Lyapunov
inequalities for importance sampling and stability of Markov processes. The
variance analysis of our estimators is given in Section \ref{SectionAnalysis}.
The results corresponding to the termination time of our algorithm can be
found in Section \ref{SecTermination}. Then we have our results on strong
conditional limit theorems in Section \ref{SecTV}. We provide numerical
experiments in Section \ref{SecSim}. Finally, we added an appendix which
contains auxiliary lemmas and technical results.

\section{Main Results\label{SectionMainResults}}

We shall use $X$ to denote a generic random variable with the same
distribution as any of the $X_{i}$'s describing the random walk $S_{n}
=\sum_{i=1}^{n}X_{i}$, for $n=1,2,...$ with $S_{0}=0$. We write $F(x)=P(X\leq x)$, $\bar{F}(x)=P(X>x)$ and $EX=\mu\in\left(  -\infty,0\right)  $. Further, let
$\Lambda(\cdot)$ be the cumulative hazard function and $\lambda(\cdot)$ be the hazard
function. Therefore, $F$ has density function, for $x\in\left(  -\infty
,\infty\right)  $
\[
f(x)=\lambda(x)e^{-\Lambda(x)},\quad\mbox{and }\bar{F}(x)=e^{-\Lambda(x)}.
\]
Of primary interest to us is the design of efficient importance sampling
(change of measure based) estimators for%
\begin{equation}
u(b)=P(\max_{n\geq1}S_{n}>b)=P(\tau_{b}<\infty), \label{u}%
\end{equation}
as $b\rightarrow\infty$ when $F$ is suitably heavy-tailed. In
particular, throughout this paper we shall assume either of the following two
sets of conditions:

\textbf{Assumption A:} $F$ has a regularly varying right tail with index
$\iota>1$. That is,
\[
\bar{F}(x)=1-F(x)=L(x)x^{-\iota},
\]
where $L(\cdot)$ is a slowly varying function at infinity, that is, $\lim_{x\rightarrow\infty}L(xt)/L(x)=1$ for all $t\in (0,1]$.

\bigskip

Or

\bigskip

\textbf{Assumption B:} There exists $b_{0}>0$ such that for all $x\geq b_{0}$
the following conditions hold.

\begin{itemize}
\item[B1] Suppose that $\lim_{x\rightarrow\infty}x\lambda(x)=\infty$.

\item[B2] There exists $\beta_{0}\in(0,1)$ such that $\partial\log
\Lambda(x)=\lambda\left(  x\right)  /\Lambda\left(  x\right)  \leq\beta
_{0}x^{-1}$ for $x\geq b_{0}$.

\item[B3] Assume that $\Lambda\left(  \cdot\right)  $ is concave for all
$x\geq b_{0}$; equivalently, $\lambda\left(  \cdot\right)  $ is assumed to be
non increasing for $x\geq b_{0}$.

\item[B4] Assume that
\[
P\left(  X>x+t/\lambda\left(  x\right)  |X>x\right)  =\exp\left(  -t\right)
(1+o\left(  1\right)  )
\]
as $x\nearrow\infty$ uniformly over compact sets in $t\geq0$. In addition, for
some $\alpha>1$, $P(X>x+t/\lambda(x)|X>x)\leq t^{-\alpha}$ for all $t,x>b_{0}$.
\end{itemize}

\begin{remark}
The analysis requires $\Lambda\left(  \cdot\right)  $ to be differentiable
only for $x\geq b_{0}$. The reason for introducing Assumptions A and B
separately is that the analysis for regularly varying distributions is
somewhat different from (easier than) the cases under Assumption B. Assumption B1 implies that the tail of $X$ decays
faster than any polynomial. Assumptions B2 and B3 basically say that the
cumulative hazard function of $F$ is ``more concave''
than at least some Weibull distribution with shape parameter $\beta_{0}<1$.
Typically, the more concave the cumulative hazard function is, the heavier the tail is.
Therefore, under Assumption B, $F$ is basically assumed to have a heavier tail
than at least some Weibull distribution with shape parameter $\beta_{0}<1$.
Assumption B4 is required only in Theorem \ref{ThmTVD} which states the
functional central limit theorem of the conditional random walk given ruin.
Note that the Assumptions A and B cover a wide range of heavy-tailed distributions that
are popular in practice, for instance, regularly varying, log-normal, Weibull
with $\beta_{0}\in(0,1)$ and so forth.
\end{remark}

In our random walk context, state-dependent importance sampling involves
studying a family of densities (depending on ``current'' state $s$ of the random walk)
which governs subsequent increments of the random walk. More precisely, we
write
\[
q_{s}\left(  x\right)  =r_{s}\left(  x\right)  ^{-1}f\left(  x\right)  ,
\]
where $r_{s}(\cdot)$ is a non-negative function such that $Er_{s}\left(
X\right)  =1$ for a generic family of state-dependent importance sampling
increment distributions. If we let $Q\left(  \cdot\right)  $ represent the
probability measure in path-space induced by the subsequent generation of
increments under $q_{s}\left(  \cdot\right)  $, then it follows easily that%
\[
u\left(  b\right)  =E^{Q}[I\left(  \tau_{b}<\infty\right)  L_{b}],
\]
with
\begin{equation}
L_{b}=\sum_{j=1}^{\tau_{b}}r_{S_{j-1}}\left(  S_{j}-S_{j-1}\right)  .
\label{LR1}%
\end{equation}
We say that
\begin{equation}
Z_{b}=I\left(  \tau_{b}<\infty\right)  L_{b} \label{IS1}%
\end{equation}
is an importance sampling estimator for $u\left(  b\right)  $ and its second
moment is simply
\[
E^{Q}[I\left(  \tau_{b}<\infty\right)  L_{b}^{2}]=E[I\left(  \tau_{b}%
<\infty\right)  L_{b}].
\]
If we select $Q\left(  \cdot\right)  =P\left(
\cdot|\tau_{b}<\infty\right)  $, or equivalently we let $r_{s}\left(
x\right)  =u(b-s)/u\left(  b-s-x\right)  $, then the corresponding importance
sampling estimator would yield zero variance. Hence, we call it zero-variance
importance sampling estimator; and we call $P\left(  \cdot|\tau_{b}%
<\infty\right)  $ the zero-variance change of measure or zero-variance
importance sampling distribution.

One of our main goals in this paper is to show that we can approximate the
zero-variance change of measure quite accurately using finitely many mixtures
whose parameters can be easily computed in advance. As a consequence, we can
use Monte Carlo simulation to not only accurately estimate $u\left(  b\right)
$ but also associated conditional expectations of the random walk given
$\tau_{b}<\infty.$ In fact, we can improve upon the zero variance change of measure in terms of overall computational cost when it
comes to estimating sample-path conditional expectations given $\tau
_{b}<\infty$ in situations where $E\left(  \tau_{b}|\tau_{b}<\infty\right)
=\infty$. The precise mathematical statements are given later in this section. Future sections are dedicated to the
development and the proofs of these statements.

Before stating the main results, we would first introduce the family of change
of measures which is based on a mixture of finitely many computable and
simulatable distributions.

\subsection{The mixture family\label{SubMixFam}}

We start by describing the precise form of the mixtures that we will use to
construct efficient importance sampling schemes. The family is constructed to consider the contribution of a ``large jump'' which makes the walk reach level $b$ in the next step,
a ``regular jump" which allows the random walk to continue
under (nearly) its original dynamics, and a number of ``interpolating" contributions. This intuition is consistent with the way in
which large deviations occur in heavy-tailed environments.

If $b-s>\eta_{\ast}$ for $\eta_{\ast}>0$ sufficiently large and to be
specified in our analysis, we propose to use a finite mixture family of the
form%
\begin{equation}
q_{s}(x)=p_{\ast}f_{\ast}(x|s)+p_{\ast\ast}f_{\ast\ast}(x|s)+\sum_{j=1}%
^{k}p_{j}f_{j}(x|s), \label{SampDist}%
\end{equation}
where $p_{\ast}$, $p_{\ast\ast}$, $p_{j}\in\lbrack0,1)$, $p_{\ast}+p_{\ast
\ast}+\sum_{j=1}^{k}p_{j}=1$, $k\in\mathbb{N}$, and $f_{\ast}$, $f_{\ast\ast}%
$, and $f_{j}$ for $j=1,..,k$ are properly normalized density functions, whose
supports are disjoint and depend on the \textquotedblleft
current\textquotedblright\ position of the walk, $s$. We will give specific
forms momentarily. The choice of $k$ depends on the
concavity of the cumulative hazard function, but otherwise is independent of $b$ and $s$.
We will ultimately let $p_{\ast},p_{\ast\ast}$ and the $p_{j}$'s depend
on $s$. In addition, we will also choose \textit{not to apply importance
sampling} if we are suitably close to the boundary level $b$. In other words, overall
we have that
\begin{equation}
q_{s}\left(  x\right)  =\Big[p_{\ast}f_{\ast}(x|s)+p_{\ast\ast}f_{\ast\ast
}(x|s)+\sum_{j=1}^{k}p_{j}f_{j}(x|s)\Big]I\left(  b-s>\eta_{\ast}\right)
+f\left(  x\right)  I\left(  b-s\leq\eta_{\ast}\right)  . \label{Sel_q}%
\end{equation}

We next specify the functional forms of each mixture distribution. First,%
\[
f_{\ast}(x|s)=f(x)\frac{I(x\leq b-s-\Lambda^{-1}(\Lambda(b-s)-a_{\ast}%
))}{P(X\leq b-s-\Lambda^{-1}(\Lambda(b-s)-a_{\ast}))},
\]
where $a_{\ast}>0$. So, $f_{\ast}$ represents the mixture component
corresponding to a \textquotedblleft regular\textquotedblright\ increment.

Further, for $a_{\ast\ast}>0$, let
\[
f_{\ast\ast}(x|s)=f(x)\frac{I(x>\Lambda^{-1}(\Lambda(b-s)-a_{\ast\ast}%
))}{P(X>\Lambda^{-1}(\Lambda(b-s)-a_{\ast\ast}))}.
\]
$f_{\ast\ast}$ represents the mixture component corresponding to the situation in which the rare event occurs
because this particular increment is large. Note that%
\[
P\left(  X>b-s|X>\Lambda^{-1}(\Lambda(b-s)-a_{\ast\ast})\right)  =\exp\left(
-a_{\ast\ast}\right)  .
\]
Therefore, if the \textquotedblleft next increment\textquotedblright, $X$,
given the current position, $s$, is drawn from $f_{\ast\ast}$, there is
probability $1-\exp\left(  -a_{\ast\ast}\right)  >0$ that the next position of
the random walk, namely $s+X$, is below the threshold $b$. This particular
feature is important in the variance control. It is necessary to introduce
such a positive $a_{\ast\ast}$ to achieve strong efficiency if we want to
consider the possibility of rogue paths in our sampler.

As we mentioned before, the choice of $k$ depends on the \textquotedblleft
concavity\textquotedblright\ of the cumulative hazard function $\Lambda(\cdot)$. The more
concave $\Lambda(\cdot)$ is, the smaller $k$ one can usually choose. In the
regularly varying case, for example, a two-mixture distribution is sufficient
(i.e. $k=0$). The analysis of importance sampling algorithms in this case
has been substantially studied in the literature (see
\cite{DULEWA06,BGL07,BL08,BL10}). We can see that this feature is captured in our
current formulation because in the regularly varying case one can find
$a_{\ast},a_{\ast\ast}>0$ such that
\begin{equation}
b-s-\Lambda^{-1}(\Lambda(b-s)-a_{\ast})\geq\Lambda^{-1}(\Lambda(b-s)-a_{\ast
\ast}), \label{k0}%
\end{equation}
for all $b-s$ large enough so that one can choose $k=0$. Indeed, to see how
(\ref{k0}) holds for the regularly varying case, just note that for any
$a\in\left(  0,1\right)  $, for each $t$, the inequality
\[
at\geq\Lambda^{-1}(\Lambda\left(  t\right)  -a_{\ast\ast})
\]
is equivalent to%
\begin{equation}
\frac{P\left(  X>at\right)  }{P\left(  X>t\right)  }\leq\exp\left(
a_{\ast\ast}\right)  . \label{RVRed1}%
\end{equation}
Similarly,
\[
t-\Lambda^{-1}(\Lambda(t)-a_{\ast})\geq at
\]
holds if and only if%
\begin{equation}
\frac{P\left(  X>(1-a)t\right)  }{P\left(  X>t\right)  }\leq\exp\left(
a_{\ast}\right)  . \label{RVRed2}%
\end{equation}
Karamata's theorem for regularly varying distributions ensures that it is
always possible to choose $a_{\ast},a_{\ast\ast}>0$ given any $a\in\left(
0,1\right)  $ so that (\ref{RVRed1}) and (\ref{RVRed2}) hold for uniformly in
$t$ and therefore we have that (\ref{k0}) holds. If Assumption A holds, we choose $a_{**}$ and then select $a_{*}$ (possibly depending on $b-s$) such that
\begin{equation}\label{Reg}
b-s-\Lambda^{-1}(\Lambda(b-s)-a_{\ast})=\Lambda^{-1}(\Lambda(b-s)-a_{\ast
\ast}).
\end{equation}
This selection is slightly different from the two-mixture form that has been analyzed in the literature (see \cite{BGL07,BL08,BL10}) which involves a ``regular'' component with support on $(-\infty,a(b-s)]$ and a ``large jump'' component with support on $(a(b-s),\infty)$, for $a\in(0,1)$. Our analysis here also applies to this parameterization. Nevertheless, to have unified statements in our results, under both Assumptions A and B, we opted for using equation \eqref{Reg}.

When (\ref{k0}) does not hold (for instance in the case of Weibull tails with
shape parameter $\beta\in(0,1)$), we will need more mixtures. In particular,
we consider a set of cut-off points $c_{0}<...<c_{k}$ depending on $b-s$.
Ultimately, we will have%
\[
c_{j}=a_{j}\left(  b-s\right)  \text{ \ for }j=1,2,...,k-1\text{.}%
\]
where $a_{1}<...<a_{k-1}$. The $a_{j}$'s are precomputed depending on
$\beta_{0}$ (from Assumption B3) according to Lemma \ref{LemWeibull} (Section \ref{SectionAnalysis}). We
let $c_{0}=b-s-\Lambda^{-1}(\Lambda(b-s)-a_{\ast}))$ and $c_{k}=\Lambda
^{-1}(\Lambda(b-s)-a_{\ast\ast}))$. Given these values we define for $1\leq
j\leq k-1$,
\[
f_{j}(x)=f(x)\frac{I(x\in(c_{j-1},c_{j}])}{P(X\in(c_{j-1},c_{j}])}.
\]
For $j=k$,
\[
f_{k}(x)=f(b-s-x)\frac{I(x\in(c_{k-1},c_{k}])}{P(X\in(b-s-c_{k},b-s-c_{k-1}%
])}.
\]
In our previous notation, we then can write%
\begin{align*}
&  r_{s}\left(  x\right)  ^{-1}\\
&  =\left(  \frac{p_{\ast}I(x\leq c_{0})}{P(X\leq c_{0})}+\frac{p_{\ast\ast
}I(x>c_{k})}{P(X>c_{k})}+\sum_{j=1}^{k-1}\frac{p_{j}I(x\in(c_{j-1},c_{j}%
])}{P(X\in(c_{j-1},c_{j}])}+\frac{f(b-s-x)p_{k}I(x\in(c_{k-1},c_{k}%
])}{f(x)P(X\in(b-s-c_{k},b-s-c_{k-1}])}\right) \\
&  \times I\left(  b-s>\eta_{\ast}\right)  +I\left(  b-s\leq\eta_{\ast
}\right)  .
\end{align*}

With this family of change of measures, we are ready to present our main
results which are based on appropriate choices of the various tuning parameters.

\subsection{Summary of the results\label{SectSumResults}}

Our first result establishes that one can explicitly choose $\eta_{*}$,
$c_{j}$'s, $a_{\ast}$, $a_{\ast\ast}$, $p_{\ast}$, $p_{\ast\ast}$ and the
$p_{j}$'s in order to have a strongly efficient (in the terminology of
rare-event simulation, see \cite{ASGLYNN}) estimator.

\begin{theorem}
\label{ThmSE1}Under either Assumptions A or B1-3, there exists an explicit
selection of $\eta_{*}$, the $c_{j}$'s, $a_{\ast}$, $a_{\ast\ast}$, $p_{\ast}%
$, $p_{\ast\ast}$ and the $p_{j}$'s so that the estimator $Z_{b}$ (defined as
in (\ref{IS1})) is strongly efficient in the sense of being unbiased and
having a bounded coefficient of variation. In particular, one can compute
$K\in\left(  0,\infty\right)  $ (uniform in $b>0$) such that%
\[
\frac{E^{Q}Z_{b}^{2}}{\left(  E^QZ_{b}\right)  ^{2}}=\frac{EL_{b}I\left(
\tau_{b}<\infty\right)  }{u\left(  b\right)  ^{2}}<K
\]
for $b>0$.
\end{theorem}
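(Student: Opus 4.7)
The plan is to bound $E^Q Z_b^2 = E[L_b I(\tau_b<\infty)]$ by $K u(b)^2$ via a Lyapunov (drift) argument applied to the second-moment recursion. Setting $M(s) = E_s^Q[L_b^2 I(\tau_b<\infty)]$ for $s<b$ and $M(s)=1$ for $s\geq b$, a one-step conditioning yields
\begin{equation*}
M(s) = \int f(x)\,r_s(x)\bigl\{I(s+x>b)+I(s+x\le b)\,M(s+x)\bigr\}\,dx.
\end{equation*}
The strategy is to exhibit a supersolution $g(s) = K_0\,v(b-s)^2$, where $v$ is the Pakes--Veraverbeke equivalent of $u$ (under Assumption A, $v(y)=\bar F_I(y)/|\mu|$; under Assumption B, $v(y) \sim \bar F(y)/|\mu|$). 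The Lyapunov comparison recalled in Section \ref{SecPre} then gives $M(0)\le g(0) = K_0 v(b)^2$, and since $u(b)\sim v(b)$ this implies $E^Q Z_b^2 / u(b)^2 \le K$. Unbiasedness $E^Q Z_b = u(b)$ is automatic because $q_s>0$ wherever $f>0$ on the relevant region, so the bound on the coefficient of variation follows.

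For $b-s\le \eta_*$ the proposal is nominal, $r_s\equiv 1$, hence $M(s)\le u(b-s)$, which is dominated by $K_0 v(b-s)^2$ once $K_0$ exceeds the finite quantity $\sup_{y\le \eta_*} u(y)/v(y)^2$. For $b-s>\eta_*$ I split the right-hand side into contributions from $f_*$, $f_{**}$ and (in Assumption B) $f_1,\dots,f_k$. On the support of $f_*$ the likelihood ratio is the bounded constant $p_*^{-1}F(c_0)$, so combining with the negative drift and slowly/regularly varying estimates on $v$ produces a contraction
\begin{equation*}
p_*^{-1}\int_{-\infty}^{c_0} f(x)\,v(b-s-x)^2\,dx \le (1-\delta)v(b-s)^2
\end{equation*}
for some $\delta>0$, provided $\eta_*$ is large enough. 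On the support of $f_{**}$ the likelihood ratio is $p_{**}^{-1}\bar F(c_k)$, which by the definition $c_k=\Lambda^{-1}(\Lambda(b-s)-a_{**})$ is comparable to $v(b-s)$; the role of $a_{**}>0$ is precisely to keep $\bar F(c_k)/\bar F(b-s)$ uniformly bounded so that rogue paths in which $s+X$ just fails to cross $b$ do not inflate the likelihood ratio.

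The main obstacle is the intermediate mixtures $f_1,\dots,f_k$, required precisely in the Weibullian regime where condition (\ref{k0}) fails and the supports of $f_*$ and $f_{**}$ no longer cover the line. Each $f_j$ contributes
\begin{equation*}
p_j^{-1}\,P\bigl(X\in (c_{j-1},c_j]\bigr)\int_{c_{j-1}}^{c_j} f(x)\,v(b-s-x)^2\,dx,
\end{equation*}
and the task is to choose cut-offs $c_j = a_j(b-s)$, with $a_j$ depending only on the hazard exponent $\beta_0$ from Assumption B3, so that each contribution is $O(v(b-s)^2)$. Lemma \ref{LemWeibull} guarantees that finitely many geometrically spaced $a_j$'s suffice, the concavity of $\Lambda$ being what limits the ratio $\bar F(c_{j-1})/\bar F(c_j)$ between consecutive cut-offs so that the relevant products telescope. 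The mirror component $f_k$, built from the reflected density $f(b-s-x)$, handles increments just short of $b-s$ and is controlled by the same concavity estimate as $f_{**}$. Summing the $k+2$ contributions and enlarging $K_0$ to absorb all implicit constants closes the supersolution inequality, delivering strong efficiency.
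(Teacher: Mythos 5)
Your overall architecture is the same as the paper's: set up the second-moment recursion, exhibit a Lyapunov supersolution of the form $K_0\,v(b-s)^2$ with $v$ asymptotically equivalent to $u$ (the Pakes--Veraverbeke approximation), decompose the one-step expectation into contributions from $f_*$, $f_{**}$ and the interpolating $f_j$'s, and close the drift inequality by choosing $p_*$, $p_{**}$, $p_j$, $\eta_*$ appropriately. Under Assumption A your description is accurate (up to absorbable constants, $\bar F_I(y)/|\mu|$ is the correct asymptotic since $\bar F_I \propto G$, the integrated tail).

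There is, however, a genuine error under Assumption B: you set $v(y) \sim \bar F(y)/|\mu|$, but the Pakes--Veraverbeke theorem gives $u(b) = (|\mu|^{-1} + o(1))\,G(b)$ \emph{in both cases}, where $G(b) = \int_b^\infty \bar F(s)\,ds$ is the integrated tail; under Assumption B one has $\bar F(b)/G(b) \to 0$ (Lemma~\ref{LemIntTail1}), so $\bar F(b)$ is asymptotically negligible relative to $G(b)$. The consequences are fatal. First, the terminal value of your candidate Lyapunov function would be $g(0) = K_0 \bar F(b)^2/\mu^2 = o\bigl(u(b)^2\bigr)$, which is impossible as an upper bound for the second moment since $E^Q Z_b^2 \geq (E^Q Z_b)^2 = u(b)^2$. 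Second, the drift inequality would actually fail in the $f_{**}$ piece: with this Lyapunov function, $J_{**} \leq e^{2a_{**}}\bar F^2(b-s)/g(s) = e^{2a_{**}}\mu^2/K_0$, a constant, so $J_{**}/p_{**} \to \infty$ once $p_{**}\to 0$ (which is forced because $p_*$ must stay near $1$). The correct supersolution under Assumption B is the same as under Assumption A, namely $g(s) = \min\{\kappa G^2(b-s), 1\}$, with $p_{**}$ of order $\bar F(b-s)/G(b-s)$; this is what makes $J_{**}/p_{**} = O(\bar F(b-s)/G(b-s)) \to 0$ and simultaneously gives $g(0) \asymp u(b)^2$. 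The rest of your decomposition (contraction from $f_*$, control of the interpolating $J_j$'s via the concavity of $\Lambda$ and Lemma~\ref{LemWeibull}) then goes through exactly as you describe.
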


The proof of this result is given at the end of Section \ref{SectionAnalysis}.
The explicit parameter selection is discussed in items I) to IV) stated in
Section \ref{SectionAnalysis}. A consequence of this result is that, by
Chebyshev's inequality, at most $n=O\left(  \varepsilon^{-2}\delta
^{-1}\right)  $ i.i.d. replications of $Z_{b}$ are enough in order to estimate
$u\left(  b\right)  $ with $\varepsilon$-relative precision and with
probability at least $1-\delta$ uniformly in $b$. Because the estimator
$Z_{b}$ is based on importance sampling, one can estimate a large class of
expectations of the form $u_{H}\left(  b\right)  =E(H\left(  S_{n}:n\leq
\tau_{b}\right)  |\tau_{b}<\infty)$ with roughly the same number of
replications in order to achieve $\varepsilon$-relative precision with at
least $1-\delta$ probability (uniformly in $b$). Indeed, if $K_{1}\in\left(
0,\infty\right)  $ is such that $K_{1}^{-1}\leq H\leq K_{1}$ then we have that
$u_{H}\left(  b\right)  \geq K_{1}^{-1}$. We also have that $L_{b}I\left(
\tau_{b}<\infty\right)  H\left(  S_{n}:n\leq\tau_{b}\right)  $ is an unbiased
estimator for $E(H\left(  S_{n}:n\leq\tau_{b}\right)  ;\tau_{b}<\infty)$ and
its second moment is bounded by $K_{1}^{2}u\left(  b\right)  ^{2}$. Therefore,
we can estimate both the numerator and the denominator in the expression
\[
u_{H}(b)=E\left(  H\left(  S_{n}:n\leq\tau_{b}\right)  \ |\ \tau_{b}%
<\infty\right)  =\frac{E(H\left(  S_{n}:n\leq\tau_{b}\right)  ;\tau_{b}%
<\infty)}{u\left(  b\right)  }%
\]
with good relative precision (uniformly in $b$). Naturally, the condition
$K_{1}^{-1}\leq H\leq K_{1}$ is just given to quickly explain the significance
of the previous observation. More generally, one might expect strong
efficiency for $u_{H}\left(  b\right)  $ using an importance sampling
estimator designed to estimate $u\left(  b\right)  $ if $u_{H}\left(
b\right)  \in(K_{1}^{-1},K_{1})$ uniformly in $b$.

Given that nothing has been said about the cost of generating a single
replication of $Z_{b}$, strong efficiency is clearly not a concept that allows
to accurately assess the total computational cost of estimating $u\left(
b\right)  $ or $u_{H}\left(  b\right)  $. For this reason, we will also
provide results that estimate the expected cost required to generate a single
replication of $Z_{b}$. However, before we state our estimates for the cost
per replication, it is worth discussing what is the performance of the
zero-variance change of measure for the regularly varying case. The following
classical result (\cite{AsmKlup96}) provides a good description of $\left(
S_{n} :n\geq0\right)  $ given $\tau_{b}<\infty$.

\begin{theorem}
[Asmussen and Kluppelberg]\label{THAK}Suppose that $X$ is regularly varying
with index $\iota>1$ and define $a\left(  b\right)  =\int_{b}^{\infty}P\left(
X>u\right)  du/P\left(  X>b\right)  $. Then, conditional on $\tau_{b}<\infty$
we have that%
\[
\left(  \frac{\tau_{b}}{a\left(  b\right)  },\left(  \frac{S_{\left\lfloor
u\tau_{b}\right\rfloor }}{\tau_{b}}:0\leq u<1\right)  ,\frac{S_{\tau_{b}}%
-b}{b}\right)  \Longrightarrow\left(  Y_{0}/\left\vert \mu\right\vert ,\left(
u\mu:0\leq u<1\right)  ,Y_{1}\right)  ,
\]
where the convergence occurs in the space $R\times D[0,1)\times R$, $P\left(
Y_{i}>t\right)  =(1+t/(\iota-1))^{-\iota+1}$ for $t\geq0$ and $i=0,1$ and
$P\left(  Y_{0}>y_{0},Y_{1}>y_{1}\right)  =P\left(  Y_{0}>y_{0}+y_{1}\right)
.$
\end{theorem}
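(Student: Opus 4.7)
The plan is to use the classical principle of a single big jump. Conditional on the rare event $\{\tau_b < \infty\}$, with overwhelming probability as $b \to \infty$ there is a unique increment $X_{\tau_b}$ of order $\Omega(b)$ causing the first passage across $b$, while all previous increments are typical (drifting downward at rate $\mu$). This qualitative picture immediately suggests the three scaling limits in the statement: $\tau_b$ is determined by how long the walk drifts down before the big jump arrives; $S_{\lfloor u\tau_b\rfloor}/\tau_b$ tracks the fluid path $u\mu$ for $u \in [0,1)$; and the overshoot $S_{\tau_b} - b$ is the residual of the conditionally large final increment.

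For the joint law of $(\tau_b/a(b), (S_{\tau_b}-b)/b)$, I would decompose by the time of the big jump. Writing
\[
P\bigl(\tau_b > t\,a(b),\; S_{\tau_b}-b > y_1 b,\; \tau_b < \infty\bigr) \;=\; \sum_{n \geq t a(b)} E\bigl[I(S_1,\ldots,S_n \leq b)\,\bar F(b(1+y_1) - S_n)\bigr],
\]
and replacing $S_n$ by its law-of-large-numbers value $n\mu$ (justified because typical trajectories with negative drift stay below $b$ with probability $1 - o(1)$), one obtains
\[
\approx \sum_{n \geq t a(b)} \bar F\bigl(b(1+y_1) + n|\mu|\bigr) \;\sim\; \frac{1}{|\mu|}\int_{t a(b)}^{\infty} \bar F\bigl(b(1+y_1) + s|\mu|\bigr)\,ds.
\]
Karamata's theorem applied to the integrated tail $\int_b^\infty \bar F(u)\,du$, which is regularly varying with index $-(\iota-1)$, produces the Pareto-type limit, and dividing by $u(b) \sim a(b)\bar F(b)/|\mu|$ (the Pakes-Veraverbeke theorem, which is Theorem \ref{ThmPV} in the paper) yields the claimed joint survival function for $(Y_0,Y_1)$.

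For the functional convergence $S_{\lfloor u\tau_b\rfloor}/\tau_b \Rightarrow u\mu$ in $D[0,1)$, I would condition on $\tau_b = n+1$ with $n$ of order $a(b)$. Given that the big jump is the final increment, the conditional law of $(X_1,\ldots,X_n)$ is asymptotically, in total variation, that of $n$ i.i.d.\ samples from $F$ subject only to the mild constraint $S_1,\ldots,S_n \leq b$, which is satisfied by the unconditioned walk with probability tending to one. A uniform law of large numbers on compact subsets of $[0,1)$ then gives $S_{\lfloor un\rfloor}/n \to u\mu$ uniformly, and joint convergence follows by combining the marginal results on continuity sets of the limit law.

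The main obstacle is controlling the rogue paths: trajectories that reach level $b$ via two or more moderately large increments, or where a large increment occurs strictly before $\tau_b$. Subexponentiality of $F$ (implied by regular variation) is the crucial property that renders these contributions negligible after summing over $n$ up to order $a(b)$. A secondary subtlety is that the functional convergence must be on $D[0,1)$ rather than $D[0,1]$: at the right endpoint, the big jump contributes a non-vanishing $\Omega(b)$ to $S_{\tau_b}$, so the limit function is discontinuous at $u=1$, and this discontinuity is precisely what the separate overshoot convergence captures.
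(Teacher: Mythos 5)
The paper does not prove Theorem \ref{THAK} at all: it is cited as a classical result from Asmussen and Kl\"uppelberg (\cite{AsmKlup96}), so there is no internal proof to compare against. Your argument is the standard ``direct'' route to this kind of conditional limit theorem --- decompose over the time of the single big jump, replace the pre-jump trajectory by the LLN path $n\mu$, integrate the tail, and normalize by the Pakes--Veraverbeke estimate of $u(b)$. That is essentially the approach of the cited reference and it is sound. It is, however, \emph{not} the method this paper uses for the stronger companion result Theorem \ref{ThmTVD}: there the authors first show (Theorem \ref{ThmTV}) that the importance-sampling mixture law $Q$ approximates $P(\,\cdot \mid \tau_b < \infty)$ in total variation, and then prove the functional limit directly under $Q^{*}$ by an explicit coupling of the mixture process $\hat S$ with an unconditioned walk $\tilde S$, identifying $\tau_b$ with the coupling separation time $N_b$ (Lemmas \ref{LemN}, \ref{LemJump}, Proposition \ref{PropCoup}) and invoking a strong Brownian approximation for the pre-jump path. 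The coupling route buys a stronger conclusion (a Brownian correction to the fluid path, not just a LLN) at the cost of first building and analyzing the sampler; your direct route is more elementary and self-contained but delivers only the law-of-large-numbers statement in \ref{THAK}.

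Two places where your sketch is lighter than a full proof deserves mention. First, conditional on $\tau_b = n+1$, the law of $(X_1,\dots,X_n)$ carries the reweighting factor $\bar F(b - S_n)$ in its density, not just the restriction $\max_{j\le n} S_j \le b$; one must argue that this factor is asymptotically constant over typical paths (using $|S_n - n\mu| = o(b)$ and regular variation) before invoking the unconditional LLN, and you pass over this. Second, if you push your tail computation to its conclusion you obtain $P\bigl((S_{\tau_b}-b)/b > y_1 \mid \tau_b < \infty\bigr) \to (1+y_1)^{-(\iota-1)}$, which does \emph{not} match the marginal $P(Y_1 > t) = (1+t/(\iota-1))^{-(\iota-1)}$ quoted in the statement; the latter is the correct limit when the overshoot is normalized by $a(b) \sim b/(\iota-1)$, as in \cite{AsmKlup96} and in the paper's own Theorem \ref{ThmTVD}. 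The $b$-normalization in Theorem \ref{THAK} as transcribed here is a typo; your derivation, carried through, would have surfaced it.
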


\begin{remark}
The previous result suggests that if Assumption A holds, the best possible
performance that one might realistically expect is $E^{Q}\tau_{b}=O\left(
b\right)  $ as long as (very important!) $\iota>2$. The full statement of
Asmussen and Kluppelberg's result (Theorem 1.1 in \cite{AsmKlup96}) also covers other subexponential
distributions. For instance, in the case of Weibull-type tails with shape
parameter $\beta_{0}$, their result suggests that $E(\tau_{b}|\tau_{b}%
<\infty)=O(b^{1-\beta_{0}})$.
\end{remark}

As the next theorem states, for the regularly varying case with $\iota>1.5$, we can guarantee $E^{Q}\tau_{b}=O\left(
b\right)  $ while maintaining strong efficiency as stated in Theorem
\ref{ThmSE1}. We will also indicate why we believe that this result is
basically the best possible that can be obtained among a reasonable class of
importance sampling distributions.

\bigskip

\begin{theorem}
\label{ThmSEb} \

\begin{itemize}
\item If Assumption A holds and $\iota>1.5$, then there exists an explicit
selection of $\eta_{*}$, the $c_{j}$'s, $a_{\ast}$, $a_{\ast\ast}$, $p_{\ast}%
$, $p_{\ast\ast}$ such that strong efficiency (as indicated in Theorem
\ref{ThmSE1}) holds and%
\[
E^{Q}\tau_{b}\leq\rho_{0}+\rho_{1}b
\]
for some $\rho_{0},\rho_{1}>0$ independent of $b$.

\item If Assumptions B1-3 hold, we assume there exists $\delta>0$ and
$\beta\in[0,\beta_{0}]$ such that $\lambda(x) \geq\delta x^{\beta-1}$ for $x$
sufficiently large. Then, with the parameters selected in Theorem \ref{ThmSE1}, there exists $\rho_{0}$ and $\rho_{1}$ independent
of $b$, such that,
\[
E^{Q}\tau_{b}\leq\rho_{0}+\rho_{1}b^{1-\beta}.
\]

\end{itemize}
\end{theorem}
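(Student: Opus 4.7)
The plan is to apply a Foster--Lyapunov drift argument: construct a nonnegative function $V:\mathbb{R}\to[0,\infty)$ vanishing on $[b,\infty)$ and satisfying
\[
E^Q[V(S_{n+1}) - V(S_n)\mid S_n=s] \leq -1
\]
for every $s<b$. Optional stopping applied to the supermartingale $V(S_{n\wedge\tau_b}) + (n\wedge\tau_b)$ then yields $E^Q\tau_b \leq V(0)$, so the two target bounds reduce to achieving $V(0)=O(b)$ and $V(0)=O(b^{1-\beta})$ respectively. I will take $V(s)=\rho_1 V_0(s)+C_1 I(s<b)$, where the offset $C_1 I(s<b)$, combined with $P(X>b-s)\geq P(X>\eta_*)=:p_0>0$, handles the ``boundary region'' $b-s\leq\eta_*$ in which $Q$ reverts to the nominal $P$ and $\rho_1 V_0$ alone would have the wrong-sign drift.

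For Part 1, I take $V_0(s)=(b-s)_+$. On the IS region, the drift of $\rho_1 V_0$ under $Q$ is $-\rho_1 E^Q[X\wedge(b-s)\mid S_n=s]$; decomposing by mixture, the regular component $f_*$ contributes $p_* E_{f_*}[X]\to p_*\mu$ via Karamata-type asymptotics under Assumption A, while the large-jump component $f_{**}$ contributes $p_{**}E_{f_{**}}[X\wedge(b-s)]=\Theta(p_{**}(b-s))$ by direct computation of the truncated Pareto mean at the cutoff $c_k\asymp(b-s)e^{-a_{**}/\iota}$. The selection of Theorem \ref{ThmSE1} permits $p_{**}\asymp 1/(b-s)$ with a tunable prefactor; the restriction $\iota>1.5$ is precisely what provides the room to make this prefactor large enough that $E^Q[X\wedge(b-s)]\geq c'>0$ uniformly in $s$ while still respecting the second-moment (strong efficiency) constraint on $p_{**}$. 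In the boundary region, the drift of $\rho_1 V_0$ under $P$ is at most $\rho_1(|\mu|+E[X_+])$ (a bounded positive quantity, since $E[X\wedge(b-s)]=\mu-E[(X-(b-s))_+]$ is bounded below), and choosing $C_1\geq(\rho_1(|\mu|+E[X_+])+1)/p_0$ makes the boundary drift of $V$ at most $-1$. Finally $V(0)=\rho_1 b+C_1=O(b)$.

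For Part 2 a linear $V_0$ is inadequate, because the bound $\lambda(x)\geq\delta x^{\beta-1}$ together with the definition $c_k=\Lambda^{-1}(\Lambda(b-s)-a_{**})$ gives $b-s-c_k=O((b-s)^{1-\beta})$, so the excess of $p_{**}E_{f_{**}}[X]$ over $|\mu|$ is only $\Theta((b-s)^\beta)$. Take instead the concave $V_0(s)=(b-s)_+^{1-\beta}$. Decomposing $\mathcal{L}V_0$ in the IS region: the regular component contributes $(1-\beta)(b-s)^{-\beta}|\mu|+O((b-s)^{-\beta-1})$ by Taylor (using $E[X^2]<\infty$ under Assumption B1), which vanishes; the large-jump component contributes $-p_{**}V_0(s)(1-o(1))$, since with probability $e^{-a_{**}}$ the walk overshoots $b$ (so $V_0(S_{n+1})=0$) and otherwise $V_0(S_{n+1})\leq(b-s-c_k)^{1-\beta}=O((b-s)^{(1-\beta)^2})=o(V_0(s))$; the intermediate mixtures $f_j$ contribute analogously using the precomputed thresholds $a_j$ of Lemma \ref{LemWeibull}. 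Because the parameter selection gives $p_{**}+\sum_j p_j\asymp(b-s)^{\beta-1}=\Theta(1/V_0(s))$, the aggregate jump-component contribution is a strictly negative constant. Choosing $\rho_1$ large gives the required IS-region drift, the boundary region is treated exactly as in Part 1, and $V(0)=\rho_1 b^{1-\beta}+C_1=O(b^{1-\beta})$.

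The main obstacle is verifying the concave-$V_0$ drift decomposition in Part 2 across all $k+2$ mixture components simultaneously, and in particular checking that the precomputed $a_j$ of Lemma \ref{LemWeibull} are exactly what make each intermediate shell's contribution to $\mathcal{L}V_0$ nonpositive under the concave choice; the cumulative hazard concavity from Assumption B3 enters crucially at this step. For Part 1, the delicate point is exhibiting the mutual compatibility of the (lower bound from drift) and (upper bound from variance) constraints on the prefactor of $p_{**}$ precisely when $\iota>1.5$, a quantitative interplay that is essentially optimal and that captures the substantive content of the theorem.
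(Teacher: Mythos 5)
Your proposal takes essentially the same route as the paper: a Foster--Lyapunov drift argument (Lemma \ref{LemTM}) with the linear Lyapunov function $h(s)=(\rho+b-s)I(s<b)$ under Assumption A and the concave $h(s)=(\rho+(b-s)^{1-\beta})I(s<b)$ under Assumption B, with the boundary region $b-s\leq\eta_{*}$ (where $Q$ reverts to $P$) handled by choosing the additive constant $\rho$ large enough that $\rho P(X>\eta_{*})$ dominates the bounded positive drift of the leading term. You also correctly locate the $\iota>3/2$ threshold in the tension between the variance-control upper bound on $p_{**}$ (coming from Theorem \ref{ThmSE1}, which fixes $\theta<|\mu|$) and the drift-control lower bound on $p_{**}$, which is exactly the content of the paper's condition $\delta_{2}^{\ast}>0$ in item V.

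One correction to the Part 2 argument: you invoke Taylor ``using $E[X^{2}]<\infty$ under Assumption B1,'' but B1 controls only the right tail, giving $E[X_{+}^{p}]<\infty$ for every $p$ while saying nothing about $E[X_{-}^{2}]$, so $E[X^{2}]<\infty$ is not available. This is harmless because no second moment is needed: concavity of $x\mapsto x^{1-\beta}$ gives
\[
(b-s-X)^{1-\beta}\leq(b-s)^{1-\beta}-(1-\beta)(b-s)^{-\beta}X
\]
pointwise for all $X\leq b-s$, and taking expectations over the regular component (plus a $(1+\delta')$ slack to absorb the discrepancy between the conditional and unconditional means) yields the drift upper bound using only $E|X|<\infty$; this is exactly how the paper argues in Proposition \ref{PropWeibull}. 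Also, your stated reason that linear $V_{0}$ is ``inadequate'' in Part 2 is slightly off: for $\beta>0$ a linear $V_{0}$ in fact has more than enough negative drift (of order $(b-s)^{\beta}$), but would only give $V(0)=O(b)$; the concave $V_{0}$ is chosen to sharpen the termination bound to $O(b^{1-\beta})$, not to repair a drift deficiency. Finally, you decompose the drift by mixture component while the paper decomposes by an explicit threshold on the increment ($X\leq(1-\varepsilon)(b-s)$ vs.\ $X>(1-\varepsilon)(b-s)$); these are organizationally different but morally equivalent.
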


\begin{remark}
The results in this theorem follow directly as a consequence of Propositions
\ref{PropReg} and \ref{PropWeibull} in Section \ref{SecTermination}. For the
regularly varying case (Assumption A), in addition to the explicit parameter
selection indicated in items I) to IV) in Section \ref{SectionAnalysis}, which
guarantee strong efficiency, we also add item V) in Section
\ref{SecTermination}, which explicitly indicates how to select the parameters
to obtain $O\left(  b\right)  $ expected stopping time while maintaining
strong efficiency. We assume that it takes at most a fixed cost $c$ of
computer time units to generate a variable from $q_{s}\left(  \cdot\right)  $
(uniformly in $s$). The previous result implies that if $X$
is regularly varying with index $\iota>1.5$, then our importance sampling
family estimates $u\left(  b\right)  $ and associated conditional expectations
such as $u_{H}\left(  b\right)  $ in $O\left(  \varepsilon^{-2}\delta
^{-1}b\right)  $ units of computer time. This is in some sense (given that we
have linear complexity in $b$ even if $\iota\in(1.5,2)$) better than what one
might expect in view of Theorem \ref{THAK}. We will further provide an
argument, see Remark \ref{RemOp3_2} in Section \ref{SecTermination}, for why
in the presence of regular variation $\iota>3/2$ appears to be basically a
necessary condition to obtain strongly efficient unbiased estimators with
$O(b)$ expected termination time.
\end{remark}

\begin{remark}
For the second case in Theorem \ref{ThmSEb}, note that when Assumption B1
holds, one can always choose $\beta=0$ and $\delta$ arbitrarily large.
This implies that the expected termination time is at the most $O(b)$ under Assumption B. It is desirable to choose $\beta$ as large as possible because this yields a (asymptotically) smaller termination time. However, there is an upper bound, namely $\beta_{0}$,
which can be derived from Assumption B2 (Lemma \ref{LemRateFun}).
\end{remark}

For the regularly varying case, we provide further results for all $\iota>1$.
If $\iota>1$, we are able to construct an importance sampling estimator $Z_{b}$
such that for some $\gamma>0$ we can guarantee $E^{Q}(Z_{b}^{1+\gamma})\leq
Ku\left(  b\right)  ^{1+\gamma}$ and at the same time $E^{Q}\tau_{b}=O\left(
b\right)  $. The next result, whose proof is given at the end of Section
\ref{SecTermination}, allows us to conclude that this can be achieved with our
method as well.

\begin{theorem}
\label{Thm1_pls_Gamma}Suppose that Assumption A is in force and $\iota
\in(1,1.5]$. Then, for each $\gamma\in(0,(\iota-1)/(2-\iota))$ we can select $K>0$, and a
member of our family of importance sampling distributions such that
\[
E^{Q}(Z_{b}^{1+\gamma})\leq Ku\left(  b\right)  ^{1+\gamma}%
\]
for all $b>0$ and $E^{Q}\left(  \tau_{b}\right)  \leq\rho_{0}+\rho_{1}b$ for
$\rho_{0},\rho_{1}\in\left(  0,\infty\right)  $. Consequently, assuming that
each increment under $q_{s}\left(  \cdot\right)  $ takes at most constant
units of computer time, then $O\left(  \varepsilon^{-2/\gamma}%
\delta^{-1/\gamma}b\right)  $ expected total cost is required to obtain an
estimate for $u\left(  b\right)  $ with $\varepsilon$ relative error and with
probability at least $1-\delta$.
\end{theorem}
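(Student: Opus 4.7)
The plan is to extend the Lyapunov function approach used for strong efficiency in Theorem \ref{ThmSE1} and for the linear termination-time estimate in Theorem \ref{ThmSEb} (item V of Section \ref{SecTermination}), replacing $L^{2}$-control of the likelihood ratio by $L^{1+\gamma}$-control. We continue to work in the two-mixture regime ($k=0$, admissible under Assumption A via \eqref{Reg}), fix $a_{\ast\ast}>0$, couple $a_{\ast}$ to it through \eqref{Reg}, and take the state-dependent weight $p_{\ast\ast}(s)=\rho/(b-s)$ for $b-s>\eta_{\ast}$, with a constant $\rho>0$ to be chosen. This parameterization, as in Proposition \ref{PropReg}, produces a uniform positive drift $E^{Q}[X\mid s]\ge\delta>0$ and hence $E^{Q}\tau_{b}\le\rho_{0}+\rho_{1}b$ via the linear Lyapunov function $h(s)=\rho_{1}(b-s)$, taking care of the termination-time half of the theorem.

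The heart of the argument is to exhibit a Lyapunov function $g$ satisfying $E^{Q}[r_{s}(X)^{1+\gamma}g(s+X)]\le g(s)$ for $b-s>\eta_{\ast}$, with $g(s)\asymp u(b-s)^{1+\gamma}$ and $g(S_{\tau_{b}})$ bounded below by a strictly positive constant on $\{\tau_{b}<\infty\}$. The natural candidate is $g(s)=K u(b-s)^{1+\gamma}$, extended by $K$ on $\{s\ge b\}$ and possibly multiplied by a slowly varying correction in $b-s$. Using the identity $E^{Q}[r_{s}^{1+\gamma}g(s+X)]=E_{P}[r_{s}^{\gamma}g(s+X)]$ together with $r_{s}\le 1$ on the large-jump component (which holds since $p_{\ast\ast}(s)\gg\bar{F}(c_{k})$ when $\iota>1$), the desired inequality rearranges as a comparison between the Jensen defect of $u^{1+\gamma}$ under the harmonic identity $E_{P}[u(b-s-X)]=u(b-s)$ and a large-jump correction of size $(1-(\bar{F}(c_{k})/p_{\ast\ast}(s))^{\gamma})\int_{x>c_{k}} g(s+x)f(x)\,dx$. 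A Karamata expansion of each piece, taking into account the truncated moment $E_{P}[X^{2}I(X\le c_{0})]\asymp(b-s)^{2-\iota}$ that controls the second-order regular-part residual, shows that closure is possible precisely when $\gamma<(\iota-1)/(2-\iota)$; this is the same threshold that, specialized to $\gamma=1$, reproduces the $\iota>3/2$ requirement of Theorem \ref{ThmSEb}.

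Given the Lyapunov inequality, the standard optional-stopping argument developed in Section \ref{SecPre} gives $E^{Q}[L_{b}^{1+\gamma}g(S_{\tau_{b}})I(\tau_{b}<\infty)]\le g(0)\le K' u(b)^{1+\gamma}$, and the positive lower bound on $g(S_{\tau_{b}})$ yields $E^{Q}Z_{b}^{1+\gamma}\le K u(b)^{1+\gamma}$ as claimed. For the complexity claim, I would combine this moment bound with the von Bahr--Esseen inequality for i.i.d. sums having finite $(1+\gamma)$-th moments to obtain $P(|\hat u_{n}/u(b)-1|>\varepsilon)\le C\varepsilon^{-(1+\gamma)}n^{-\gamma}$, so that $n=O(\varepsilon^{-(1+\gamma)/\gamma}\delta^{-1/\gamma})\le O(\varepsilon^{-2/\gamma}\delta^{-1/\gamma})$ replications suffice (using $\gamma\le 1$); multiplying by the $O(b)$ per-replication cost gives the stated total computational cost.

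The main obstacle is the delicate calibration inside the Lyapunov inequality. The Jensen defect, the large-jump slack, and the second-order regular-part residual are all of the same asymptotic order $(b-s)^{-1}g(s)$, and one must simultaneously track their constants and their slowly varying corrections. Enlarging $\rho$ shrinks the large-jump contribution but does not affect the Jensen defect coming from $P$, so the latter saturates the available slack precisely at $\gamma=(\iota-1)/(2-\iota)$; arguing in the spirit of Remark \ref{RemOp3_2}, an examination of paths with more than one large jump would further indicate that this bound on $\gamma$ is essentially optimal within the mixture-family importance samplers considered.
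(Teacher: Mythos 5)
Your overall strategy coincides with the paper's: a Lyapunov function of the form $g_\gamma(s)\asymp G(b-s)^{1+\gamma}$ (the paper uses $\min\{\kappa G(b-s)^{1+\gamma},1\}$; yours, $Ku(b-s)^{1+\gamma}$, is asymptotically the same by Pakes--Veraberbeke) is verified for the $L^{1+\gamma}$-weighted Lyapunov inequality via Lemma \ref{LemLyap}, and a separate linear Lyapunov function $h(s)=\rho+(b-s)$ verified against Lemma \ref{LemTM} for the expected termination time. Your identification of the critical threshold $\gamma<(\iota-1)/(2-\iota)$ as the point where the positive contribution from the big-jump component ($\sim p_{**}$ times its likelihood-ratio penalty) balances the Jensen-type defect in the regular component, and the consistency check at $\gamma=1$ recovering $\iota>3/2$, match the paper's Proposition \ref{PropReg1_g}. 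One remark: your statement that the drift $E^Q[X\mid s]$ is ``uniformly positive'' is not quite how the argument works near the boundary $b-s\leq\eta_*$, where no importance sampling is applied and the drift reverts to $\mu<0$; the paper compensates using the $\rho P(X>\eta_*)$ term in $h$, and your sketch should flag that this boundary layer must be handled separately rather than appealing to uniform positivity.

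The one step where you take a genuinely different route is the translation of the $(1+\gamma)$-th moment bound into a sample-size estimate. You invoke von Bahr--Esseen for the sum of i.i.d. centered terms with finite $(1+\gamma)$-th moment and then apply Markov's inequality, obtaining $P(|\hat u_n/u(b)-1|>\varepsilon)\leq C\varepsilon^{-(1+\gamma)}n^{-\gamma}$, whereas the paper splits on the event $\{\max_{i\leq n}W_i\leq n\}$, bounds the truncated second moment by $\frac{2K}{1-\gamma}n^{1-\gamma}$, and applies Chebyshev, arriving at a slightly weaker $\varepsilon^{-2}$ dependence. Both yield the stated $O(\varepsilon^{-2/\gamma}\delta^{-1/\gamma})$ replication count and hence the same overall complexity once multiplied by the $O(b)$ per-replication cost. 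Your von Bahr--Esseen route is cleaner and gives a sharper $\varepsilon$-dependence as a bonus, at the cost of importing an external inequality rather than keeping the argument self-contained; the paper's truncation argument is more elementary and reusable in settings where von Bahr--Esseen is not directly applicable.
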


\begin{remark}
Similar to the case of controlling the second moment, we believe that the upper bound $(\iota-1)/(2-\iota)$ is optimal within a reasonable class of simulation algorithms. A heuristic argument will be given in Section \ref{SecTermination}.

\end{remark}

\bigskip

Finally, the proposed family of change of measures and analysis techniques are useful not only for Monte Carlo simulation purposes but also for
asymptotic analysis. We provide the following approximation results which
improve upon classical results in the literature such as Theorem \ref{THAK}.
By appropriately tuning various parameters in our
family we can approximate
$P(S\in\cdot|\tau_{b}<\infty)$ by $Q\left(  S\in\cdot\right)  $ asymptotically
as $b\nearrow\infty$. We will explicitly indicate how to do so in later analysis.

\bigskip

\begin{theorem}
\label{ThmTVA}Under either Assumptions A or B1-3, there exists an explicit
selection of $\eta_{*}$, the $c_{j}$'s, $a_{\ast}$, $a_{\ast\ast}$, $p_{\ast}%
$, $p_{\ast\ast}$ and the $p_{j}$'s so that
\[
\overline{\lim}_{b\rightarrow\infty}\sup_{A}\left\vert P(S\in A|\tau
_{b}<\infty)-Q\left(  S\in A\right)  \right\vert =0.
\]

\end{theorem}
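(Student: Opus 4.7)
The plan is to construct a one-step coupling of the importance sampling kernel with the Doob h-transform of the original walk, and then to propagate it along the entire path via a Lyapunov argument. Let $\pi^{\star}(s,dx)=\frac{u(b-s-x)}{u(b-s)}F(dx)$ denote the Doob h-transform (so that $P(\cdot\mid\tau_b<\infty)$ is the Markov path measure with transition $\pi^{\star}$), and let $\pi^{Q}(s,dx)=q_s(x)\,dx$ be the importance sampling kernel from \eqref{Sel_q}. Because $f_{\ast\ast}$ puts mass at least $p_{\ast\ast}e^{-a_{\ast\ast}}>0$ on $\{x>b-s\}$ at every step, $\tau_b<\infty$ holds $Q$-almost surely. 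A sequential maximal coupling of the two kernels, one state at a time, then gives
\begin{equation*}
\sup_A\bigl|P(S\in A\mid\tau_b<\infty)-Q(S\in A)\bigr|\ \le\ E^{Q}\sum_{n=0}^{\tau_b-1}\delta(S_n),
\end{equation*}
where $\delta(s):=\|\pi^{Q}(s,\cdot)-\pi^{\star}(s,\cdot)\|_{TV}$.

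Next I would show $\delta(s)\to 0$ uniformly as $b-s\to\infty$ by matching $q_s$ against $\pi^{\star}$ region by region, using the heavy-tail asymptotics available under Assumptions A or B1-3. On $\{x\le c_0\}$, the Pakes-Veraverbeke theorem (see \ref{ThmPV}) gives $u(b-s-x)/u(b-s)=1+o(1)$ uniformly, so $\pi^{\star}$ restricted to that region is essentially proportional to $f(x)$; tuning $p_{\ast}$ to the h-transform mass of $\{X\le c_0\}$ then makes the ``regular'' piece $p_{\ast}f_{\ast}(\cdot\mid s)$ match $\pi^{\star}$ up to a $1+o(1)$ factor. On $\{x>c_k\}$, the identity $\Lambda(c_k)=\Lambda(b-s)-a_{\ast\ast}$ together with the concave-hazard control of Assumption B3 (or the Karamata arguments already used in Section \ref{SubMixFam} under Assumption A) calibrates $p_{\ast\ast}f_{\ast\ast}$ to the large-jump shape of $\pi^{\star}$. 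The interpolating windows $(c_{j-1},c_j]$, which are needed only under Assumption B, are handled analogously with each $p_j$ chosen to match the h-transform mass of its window to leading order.

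The main difficulty is that $\tau_b$ can grow with $b$, so $\delta$ must decay fast enough along typical $Q$-trajectories to overpower the length of the walk. I would address this with a Lyapunov function $V(s)$ analogous to those used for the variance analysis in Section \ref{SectionAnalysis}: $V$ is designed to dominate $\delta$ on states with $b-s$ large and to satisfy a drift inequality of supermartingale type under $\pi^{Q}$, so that a Foster-Lyapunov summation yields $E^{Q}\sum_{n=0}^{\tau_b-1}\delta(S_n)\to 0$ as $b\to\infty$. The contribution from the boundary region $\{b-s\le\eta_{\ast}\}$, where no importance sampling is applied, is treated separately: its $Q$-expected occupation time is bounded and $\delta$ is bounded on compact sets, so that block contributes an asymptotically negligible correction. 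The uniform $L^2$ control on $L_b/u(b)$ from Theorem \ref{ThmSE1} also serves as a safety net for absorbing residual fluctuations.
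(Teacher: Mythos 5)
Your proposal takes a genuinely different route from the paper's, but it has a gap that I do not think can be closed without essentially switching to the paper's argument. The paper proves this theorem by combining Lemma~\ref{LemTV1} (an $L^2$ bound on total variation via the likelihood ratio: if $E^{Q}(Z_b/u(b))^2\le 1+\varepsilon$ then the TV distance is at most $\varepsilon^{1/2}$) with Theorem~\ref{ThmTV}, which carries out a refined Lyapunov analysis with the tuned test function $g(s)=\min\{1,\mu^{-2}\gamma(s)G^2(b-s)\}$, $\gamma(s)=1+5\varepsilon+\kappa s^{1+\varepsilon'}/b^{1+\varepsilon'}$, to push the second-moment constant all the way down to $1+\varepsilon$. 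The crucial point is that this $L^2$ method sees the cancellation between positive and negative fluctuations of the likelihood ratio along the trajectory.

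Your sequential maximal coupling bound,
\[
\sup_A\bigl|P(S\in A\mid\tau_b<\infty)-Q(S\in A)\bigr|\ \le\ E^{Q}\sum_{n=0}^{\tau_b-1}\delta(S_n),
\]
is an $L^1$-type bound and does not. Concretely, consider the per-step TV distance $\delta(s)$ on the ``regular'' region $\{x\le c_0\}$. Under the Doob $h$-transform the density there is $\frac{u(b-s-x)}{u(b-s)}f(x)$, and by Pakes--Veraverbeke $\frac{u(b-s-x)}{u(b-s)}=1+c\,x\,\bar F(b-s)/G(b-s)+o(\cdot)$ for $x$ of order one, i.e.\ it carries an $x$-dependent tilt at first order. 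Your kernel's regular component is $\frac{p_\ast}{P(X\le c_0)}f(x)$, a \emph{constant} multiple of $f$ on $\{x\le c_0\}$; no choice of $p_\ast$ can match the linear-in-$x$ dependence. Hence $\delta(s)$ is genuinely of order $\bar F(b-s)/G(b-s)$, which under Assumption A is $\Theta\bigl(1/(b-s)\bigr)$. Along a typical $Q$-trajectory $b-S_n\approx b+|\mu|n$ and $\tau_b=\Theta(b)$ (Theorem~\ref{THAK}), so
\[
E^{Q}\sum_{n=0}^{\tau_b-1}\delta(S_n)\ \asymp\ \sum_{n=0}^{\Theta(b)}\frac{1}{b+|\mu|n}\ \asymp\ \frac{1}{|\mu|}\log\Bigl(1+|\mu|\,\Theta(1)\Bigr)\ =\ \Theta(1),
\]
not $o(1)$. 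A Lyapunov drift argument cannot rescue this, because the sum is already $\Theta(1)$ along the deterministic fluid path; there is no bad event to excise. This is exactly why the paper uses Lemma~\ref{LemTV1}: the likelihood-ratio increments have $O(1/(b-s))$ \emph{bias} but $O(1/(b-s)^2)$ \emph{variance}, so the $L^2$ norm of $Z_b/u(b)-1$ accumulates to $o(1)$ even though the $L^1$-style coupling bound accumulates to $\Theta(1)$. Your claim that ``$u(b-s-x)/u(b-s)=1+o(1)$ uniformly on $x\le c_0$'' is also not correct as stated, since under Assumption A $c_0$ is a constant fraction of $b-s$ and the ratio at $x=c_0$ is bounded away from $1$; the pointwise approximation holds only for $x=o(b-s)$, which is what produces the $\Theta(1/(b-s))$ integrated error above.
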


The previous result is an immediate consequence of Lemma \ref{LemTV1} combined
with Theorem \ref{ThmTV}. It further shows that our mixture family is an
appropriate vehicle to approximate the conditional distribution of the random
walk given $\tau_{b}<\infty$. Moreover, due to the convenience of the mixture
form, as a corollary of the previous theorem and using a coupling technique,
we can show, without much additional effort, the following theorem which
further extends Theorem 1.1 in \cite{AsmKlup96} by adding a central limit
theorem correction term. This theorem is proven at the end of Section
\ref{SectCLT}.




\begin{theorem}
\label{ThmTVD}Suppose that either Assumption A or Assumptions B1-4 are
in force. Let $\sigma^{2}=Var\left(  X_{1}\right) < \infty $ and $a\left(
b\right)  =\int_{b}^{\infty}P\left(  X>u\right)  du/P\left(  X>b\right)  $.
Then%
\[
\left(  \frac{\tau_{b}}{a(b)},\left\{  \frac{S_{\left[  t\tau_{b}\right]
}-t\mu\tau_{b}}{\sqrt{\tau_{b}}}\right\}  _{0\leq t<1},\frac{S_{\tau_{b}}%
-b}{a(b)}\right)  \Rightarrow(Y_{0}/\left\vert \mu\right\vert ,\left\{  \sigma
B(t)\right\}  _{0\leq t<1},Y_{1}),
\]
in $R\times D[0,1)\times R$. $\{B(t): 0\leq t<1\}$ is a standard Brownian motion independent of $(Y_0, Y_1)$. The joint law of $Y_{0}$ and $Y_{1}$ is
defined as follows. First, $P\left(  Y_{0}>y_0,Y_{1}>y_1\right)  =P\left(
Y_{1}>y_0+y_1\right)  $ with $Y_{0}\overset{d}{=}Y_{1}$ and

\begin{itemize}
\item If Assumption A holds then
\[
P\left(  Y_{1}>t\right)  =\frac{1}{(1+t/(\iota-1))^{\iota-1}}.
\]

\item If Assumptions B1-4 hold, then $Y_{1}$ follows exponential distribution with mean 1 and consequently $Y_0$ and $Y_1$ are independent.
\end{itemize}
\end{theorem}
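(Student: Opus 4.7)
The plan is to transfer the question from $P(\cdot\mid \tau_b<\infty)$ to the importance sampling measure $Q$ via Theorem \ref{ThmTVA}, and then prove the joint weak convergence directly under $Q$ by exploiting the explicit two-stage mixture structure of $q_s$, a coupling to a nominal random walk, and Donsker's invariance principle.

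First, by Theorem \ref{ThmTVA} the total variation distance between $P(\cdot\mid\tau_b<\infty)$ and $Q(\cdot)$ tends to zero, so weak convergence under one measure is equivalent to weak convergence under the other. It is therefore enough to work under $Q$. Represent each pre-ruin increment as a two-stage draw: first sample an independent label $\xi_n\in\{\ast,\ast\ast,1,\ldots,k\}$ with the state-dependent mixture weights $(p_\ast,p_{\ast\ast},p_1,\ldots,p_k)$ (which depend only on $b-S_{n-1}$), and then draw $X_n$ from the corresponding component $f_{\xi_n}(\cdot\mid S_{n-1})$. Let $\tau_b^{\ast\ast}$ denote the first index at which $\xi_n=\ast\ast$. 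I first verify that with $Q$-probability tending to one, $\tau_b=\tau_b^{\ast\ast}$ and all prior labels equal $\ast$; this follows from a union bound combined with the expected-termination bound $E^Q\tau_b=O(b)$ (respectively $O(b^{1-\beta_0})$) of Theorem \ref{ThmSEb} and the asymptotic smallness of the $p_{\ast\ast}$ and intermediate $p_j$ weights relative to $p_\ast$.

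Second, on this dominant event the increments before $\tau_b$ are independent draws from the truncated density $f_\ast(\cdot\mid S_{n-1})$, which is $F$ conditioned on the increment being at most $c_0(b-S_{n-1})=b-S_{n-1}-\Lambda^{-1}(\Lambda(b-S_{n-1})-a_\ast)$. Since $c_0\to\infty$ as $b-S_{n-1}\to\infty$, the mean and variance of this truncated distribution tend to $\mu$ and $\sigma^2$ and a Lindeberg-type condition holds. I couple the $Q$-walk with a nominal i.i.d.\ walk $\tilde S_n$ with law $F^{\otimes\infty}$ via the quantile transform so that the two coincide on each step that the truncation is not triggered; a maximal-coupling argument combined with the uniform-integrability input from the Lyapunov bounds of Section \ref{SectionAnalysis} shows that $\sup_{n\le\tau_b}|S_n-\tilde S_n|/\sqrt{\tau_b}\to 0$ in $Q$-probability. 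Donsker's invariance principle applied to $\tilde S_n$, together with the fact that $\tau_b\to\infty$ in probability, then yields the fluctuation component $\{\sigma B(t)\}_{0\le t<1}$ in $D[0,1)$.

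Third, the pair $(\tau_b/a(b),(S_{\tau_b}-b)/a(b))$ is governed by the single terminating increment. Conditioning on $S_{\tau_b-1}=s$, this increment is drawn from $f_{\ast\ast}(\cdot\mid s)$, i.e.\ $F$ conditioned on $X>\Lambda^{-1}(\Lambda(b-s)-a_{\ast\ast})$. Under Assumption A, standard regularly varying calculations (scaling by $a(b)$ and using Karamata's theorem) produce the bivariate Pareto law with $P(Y_0>y_0,Y_1>y_1)=P(Y_1>y_0+y_1)$. Under Assumptions B1--4, hypothesis B4 delivers asymptotic exponentiality of the rescaled overshoot and, because the position $S_{\tau_b-1}$ concentrates at $b-\mu\tau_b$ with $a(b)=1/\lambda(b)(1+o(1))$ much smaller than $\tau_b$, the overshoot decouples from $\tau_b$, yielding independent exponential $Y_0,Y_1$. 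The independence of the Brownian component from $(Y_0,Y_1)$ is built in by the two-stage representation, since the labels $(\xi_n)$ are sampled independently of the magnitudes drawn from $f_\ast$.

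The main obstacle is the joint convergence in the product space $\mathbb{R}\times D[0,1)\times\mathbb{R}$: one must show simultaneously that the Brownian fluctuation, the stopping time, and the overshoot become asymptotically independent with only the explicit $(Y_0,Y_1)$ coupling. The cleanest way is to condition on the full label sequence $(\xi_n)_{n\ge 1}$: given the labels, the fluctuations of the $f_\ast$-steps form an independent finite-variance triangular array to which Donsker applies, whereas the $\ast\ast$-step provides the overshoot. Combining this conditional convergence with the tightness of $\tau_b/a(b)$ (which itself is inherited from Theorem \ref{THAK} through the total variation approximation) and the uniform integrability coming from the Lyapunov estimates in Section \ref{SectionAnalysis} completes the argument.
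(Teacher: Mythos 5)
Your overall strategy---transferring from $P(\cdot\mid\tau_b<\infty)$ to $Q$ via the total variation result, identifying the terminating step as a single $f_{\ast\ast}$ draw, coupling the pre-ruin increments to a nominal walk, and then combining Donsker for the fluctuations with a Karamata/B4 computation for the overshoot---follows the paper's route. However, the central step is asserted rather than proved, and the assertion is actually false as stated.

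You write that the independence of the Brownian component from $(Y_0,Y_1)$ is ``built in by the two-stage representation, since the labels $(\xi_n)$ are sampled independently of the magnitudes drawn from $f_\ast$.'' They are not. The mixture weights $p_\ast$, $p_{\ast\ast}$, $p_1,\dots,p_k$ all depend on $b-S_{n-1}$, and $S_{n-1}$ is a function of the previous magnitudes. Hence the label at time $n$ is correlated with the magnitudes at times $1,\dots,n-1$, and conversely the conditional law of the magnitudes given the label sequence is state-dependent, not a deterministic triangular array. Thus the ``cleanest way'' you propose in the final paragraph---condition on the labels and apply Donsker conditionally---runs into exactly the circularity it is supposed to sidestep. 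The paper's Proposition \ref{PropCoup} does the real work here: it shows that conditional on the nominal walk $\tilde S$ lying in a high-probability event $B_b$ on which $\tilde S_j\approx j\mu$, the conditional law of $N_b/a(b)$ converges to a fixed law not depending on $\tilde S$. That asymptotic de-correlation is what actually delivers independence; nothing makes it automatic.

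Two secondary issues. First, you invoke $E^Q\tau_b=O(b)$ from Theorem \ref{ThmSEb} for the union bound, but the measure used for the total variation approximation (Theorem \ref{ThmTV}/\ref{ThmTVA}) is tuned differently---the paper flags this explicitly at the end of Section \ref{SecTV}. The paper's Lemma \ref{LemJump} instead bounds $Q^*(\tau_b=N_b)$ by conditioning on $\{N_b<\infty\}$ and using $u(b-\eta_*)\to 0$ for the nominal walk, avoiding any termination-time control; and it only yields a bound $1-\gamma(a_{\ast\ast},\varepsilon)$ with $\gamma\to 0$ as $a_{\ast\ast},\varepsilon\to 0$, so the theorem is obtained via a double limit (first $b\to\infty$, then $a_{\ast\ast},\varepsilon\to 0$), a structure your sketch does not reflect. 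Second, your coupling leaves the pre-ruin increments distributed as the truncated $f_\ast(\cdot\mid S_{n-1})$, requiring a state-dependent Lindeberg triangular-array CLT plus a separate estimate for $\sup_{n\le\tau_b}|S_n-\tilde S_n|/\sqrt{\tau_b}$; the paper's decomposition $q_s=p(s)f+(1-p(s))q_s^\ast$ (Lemma \ref{LemDensDec}) lets the coupled steps be exact draws from $f$, so a single strong-approximation embedding $\tilde S_{\lfloor t\rfloor}=\mu t+\sigma B(t)+e(t)$ suffices. Your route is not impossible, but it trades a clean coupling for a harder CLT and still leaves the independence gap to be filled.
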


\section{Preliminaries: Heavy tails, importance sampling and Lyapunov
inequalities}

\label{SecPre}

\subsection{Heavy tails}

A non-negative random variable $Y$ is said to be heavy-tailed if $E\exp\left(
\theta Y\right)  =\infty$ for every $\theta>0$. This class is too big to
develop a satisfactory asymptotic theory of large deviations and therefore one
often considers the subexponential distributions which are defined
as follows.

\begin{definition}
\label{DefSubExp} Let $Y_{1},...,Y_{n}$ be independent copies of a
non-negative random variable $Y$. The distribution of $Y$ (or Y itself) is
said to be subexponential if and only if
\[
\lim_{u\rightarrow\infty}\frac{P\left(  Y_{1}+...+Y_{n}>u\right)  }{P\left(
Y>u\right)  }=n.
\]
Actually it is necessary and sufficient to verify the previous limit for $n=2$ only.
\end{definition}

Examples of distributions that satisfy the subexponential property include
Pareto distribution, Lognormal distributions, Weibull distributions, and so
forth. A general random variable $X$ is said to have a subexponential right
tail if $X^{+}$ is subexponential. In such a case, we simply say that $X$ is subexponential.

If $X$ is subexponential, then $X$ satisfies that $P\left(  X>x+h\right)
/P\left(  X>x\right)  \rightarrow1$ as $x\rightarrow\infty$ for each
$h\in\left(  -\infty,\infty\right)  $. A random variable with this property is
said to possess a ``long tail". It turns out that there are long tailed random
variables that do not satisfy the subexponential property (see \cite{EKM97}).

In order to verify the subexponential property in the context of random
variables with a density function (as we shall assume here) one often takes
advantage of the so-called cumulative hazard function. Indeed, a sufficient condition to
guarantee subexponentiality due to Pitman is given next (see \cite{EKM97}).

\begin{proposition}
\label{PropPitmanC} A random variable $X$ with concave cumulative hazard function
$\Lambda\left(  \cdot\right)  $ and hazard function $\lambda\left(
\cdot\right)  $ is subexponential if%
\[
\int_{0}^{\infty}\exp\left(  x\lambda\left(  x\right)  -\Lambda\left(
x\right)  \right)  dx<\infty.
\]

\end{proposition}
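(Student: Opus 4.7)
The plan is to verify the $n=2$ condition in Definition \ref{DefSubExp}, namely $P(Y_1+Y_2>u)/\bar{F}(u)\to 2$, for independent copies $Y_1,Y_2$ of $X^+$. Since a general random variable is declared subexponential via its positive part, I may assume $X\geq 0$, in which case
\[
P(Y_1+Y_2>u) = \bar{F}(u) + \int_0^u \bar{F}(u-y) f(y)\,dy,
\]
so it suffices to show $R(u):=\bar{F}(u)^{-1}\int_0^u \bar{F}(u-y)f(y)\,dy$ tends to $1$. The lower bound $\liminf_u R(u)\geq 1$ is routine: concavity of $\Lambda$ makes $\lambda$ non-increasing, and the integrability hypothesis forces $\lambda(\infty)=0$ (otherwise $y\lambda(y)-\Lambda(y)$ would stay bounded below). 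Hence $\bar{F}(u-y)/\bar{F}(u)\to 1$ for every fixed $y$, and bounded convergence on $[0,A]$ yields $\liminf_u R(u)\geq F(A)$, so letting $A\to\infty$ gives the claim.

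For the upper bound I would use the standard symmetrization: substituting $z=u-y$ in the integral over $[u/2,u]$ yields
\[
\int_0^u \bar{F}(u-y)f(y)\,dy \;=\; \int_0^{u/2}\!\bigl[\bar{F}(u-y)f(y)+\bar{F}(y)f(u-y)\bigr]dy.
\]
Concavity of $\Lambda$ then supplies the crucial pointwise bounds: for $y\leq u/2$ one has $u-y\geq y$, and since $\lambda$ is non-increasing, $\lambda(u-y)\leq \lambda(y)$; the tangent-line inequality for the concave $\Lambda$ at the point $u-y$ further gives $\Lambda(u)-\Lambda(u-y)\leq y\lambda(u-y)\leq y\lambda(y)$. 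Dividing by $\bar{F}(u)=e^{-\Lambda(u)}$ and assembling,
\[
\frac{\bar{F}(u-y)f(y)+\bar{F}(y)f(u-y)}{\bar{F}(u)} \;\leq\; 2\lambda(y)\,e^{\,y\lambda(y)-\Lambda(y)}.
\]

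The remaining step is dominated convergence: the integrand converges pointwise to $f(y)$ (the first summand by long-tailedness, the second because $\lambda(u-y)\to 0$), so once the dominating function above is integrable on $[0,\infty)$ we obtain $R(u)\to\int_0^\infty f(y)\,dy=1$ and the proof is complete. Verifying integrability of this dominating function is the main technical point. Using that $\lambda$ is non-increasing with $\lambda(\infty)=0$, pick $b_0$ so that $\lambda$ is bounded on $[b_0,\infty)$; then
\[
\int_{b_0}^\infty \lambda(y)\,e^{y\lambda(y)-\Lambda(y)}\,dy \;\leq\; \lambda(b_0)\int_{b_0}^\infty e^{y\lambda(y)-\Lambda(y)}\,dy \;<\;\infty
\]
by the hypothesis, while the piece over $[0,b_0]$ is handled by bounding $e^{y\lambda(y)-\Lambda(y)}$ by its continuous maximum and using $\int_0^{b_0}\lambda(y)\,dy=\Lambda(b_0)<\infty$. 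The one place requiring care is precisely this passage from the bare hypothesis (no $\lambda$ factor) to integrability of the dominating function (with a $\lambda$ factor), which is enabled by the concavity of $\Lambda$ through the conclusion $\lambda(\infty)=0$.
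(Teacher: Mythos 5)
The paper does not prove Proposition \ref{PropPitmanC}; it cites it as Pitman's criterion from \cite{EKM97}, so there is no ``paper proof'' to compare against. Your argument follows what is essentially the standard route to this result: symmetrize the convolution integral at $u/2$, bound the two symmetric pieces pointwise via the concavity of $\Lambda$, and invoke dominated convergence with dominating function $2\lambda(y)e^{y\lambda(y)-\Lambda(y)}$.

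There is, however, a genuine gap. You assert that ``the integrability hypothesis forces $\lambda(\infty)=0$ (otherwise $y\lambda(y)-\Lambda(y)$ would stay bounded below).'' The parenthetical justification is false: since $y\lambda(y)-\Lambda(y)=-\int_0^y(\lambda(t)-\lambda(y))\,dt$, this quantity can tend to $-\infty$ even when $\lambda$ decreases to a strictly positive limit. Concretely, take $\lambda(y)=1+(1+y)^{-1/2}$ on $[0,\infty)$. Then $\Lambda(y)=y+2\sqrt{1+y}-2$ is concave, $\lambda(\infty)=1>0$, and $y\lambda(y)-\Lambda(y)=-\sqrt{1+y}-(1+y)^{-1/2}+2\to-\infty$, so $\int_0^\infty e^{y\lambda(y)-\Lambda(y)}\,dy<\infty$. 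Yet the corresponding $\bar F(y)=e^{-y-2\sqrt{1+y}+2}$ is not even long-tailed (its tail is exponentially dominated), hence not subexponential. So $\lambda(x)\to 0$ is an independent hypothesis --- it is stated explicitly in the reference's version of Pitman's criterion as ``$\lambda$ eventually decreasing to $0$'' --- and does not follow from the integrability condition as you claim. Since your lower bound, the pointwise convergence of the integrand, and indeed the truth of the conclusion all rely on $\lambda(\infty)=0$, this is a substantive gap, not a presentational one; in fact the example above shows that the proposition as literally stated (without the $\lambda\to 0$ hypothesis) is false, so no valid proof of that statement exists, and the correct move is to add the missing hypothesis rather than to deduce it.

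Once $\lambda\to 0$ is assumed, the remainder of your argument is correct: the symmetrization identity, the concavity bounds $\Lambda(u)-\Lambda(u-y)\leq y\lambda(u-y)\leq y\lambda(y)$ for $y\leq u/2$, the resulting domination, and the integrability of the dominating function all go through. For the piece over $[0,b_0]$, a cleaner observation than ``bound by the continuous maximum'' is that global concavity of $\Lambda$ with $\Lambda(0)=0$ already gives $y\lambda(y)\leq\Lambda(y)$, so $e^{y\lambda(y)-\Lambda(y)}\leq 1$ there and the integral is bounded by $2\Lambda(b_0)<\infty$; this also avoids any worries about $\lambda$ being unbounded near the origin.
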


\bigskip

A distinctive feature of heavy-tailed random walks is that the rare event
$\{\sup_{n}S_{n}>b\}$ is asymptotically (as $b\rightarrow\infty$) caused by a
single large increment, while other increments behave like ``regular'' ones.
Therefore, one can obtain the following approximation, often called
fluid heuristic, for the probability $u(b)$:
\begin{align}
u(b)  &  =P(\tau_{b}<\infty)=\sum_{k=1}^{\infty}P(\tau_{b}=k)\label{fluid}\\
&  \approx\sum_{k=1}^{\infty}P(X_{k}>b-(k-1)\mu)\approx-\frac{1}{\mu}\int
_{b}^{\infty}P(X>s)ds.\nonumber
\end{align}
For notational convenience, we denote the integrated tail by
\begin{equation}
G(x)=\int_{x}^{\infty}P(X>s)ds. \label{IntTail}%
\end{equation}
The previous heuristic can actually be made rigorous under subexponential
assumptions. This is the content of the Pakes-Veraberbeke theorem which we
state next (see page 296 in \cite{ASM03}).

\begin{theorem}
[Pakes-Veraberbeke]\label{ThmPV}If $F$ is long tailed (i.e. $\bar{F}(x+h)
/\bar{F}(x) \longrightarrow1$ as $x\nearrow\infty$ for every $h>0$) and
$\int_{0}^{t}P\left(  X>s\right)  ds /EX^{+}$ is subexponential (as a function
of $t$) then
\begin{equation}
u(b)=-(\mu^{-1}+o(1))G(b), \label{App}%
\end{equation}
as $b\rightarrow\infty$.
\end{theorem}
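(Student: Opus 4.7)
The plan is to follow the classical ladder-height approach to Pakes-Veraberbeke. Since $\mu=EX<0$, the maximum $M := \sup_n S_n$ is almost surely finite and $u(b) = P(M>b)$. Let $\tau_+ = \inf\{n\geq 1: S_n>0\}$, $\rho := P(\tau_+<\infty) \in (0,1)$, and let $H$ denote the strictly ascending ladder height conditional on $\{\tau_+<\infty\}$. By the Wiener--Hopf / duality decomposition for a random walk with negative drift,
\[
M \stackrel{d}{=} \sum_{i=1}^{N} H_i, \qquad P(N=n)=(1-\rho)\rho^n\ (n\geq 0), \qquad H_i \ \text{i.i.d.}\sim H,
\]
so that $u(b) = (1-\rho)\sum_{n=1}^\infty \rho^n\, P(H_1+\cdots+H_n>b)$. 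The remaining work is to (i) identify the tail of $H$ in terms of $G$, and (ii) push the resulting subexponential asymptotic through the geometric sum.

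For (i), note that since $\mu<0$ the weak descending ladder epoch $\tau_-=\inf\{n\geq 1:S_n\leq 0\}$ is a.s.\ finite with $m_- := E[-S_{\tau_-}]<\infty$, and the renewal measure $V_-$ on $[0,\infty)$ of $-S_{\tau_-}$ is well defined. A standard fluctuation identity yields
\[
P(S_{\tau_+}>x,\ \tau_+<\infty) = \int_{[0,\infty)} \bar F(x+y)\, V_-(dy).
\]
Combining the long-tail hypothesis $\bar F(x+h)/\bar F(x)\to 1$ with Blackwell's renewal theorem and a dominated-convergence argument (with Potter-type domination coming from subexponentiality of $G$) lets me pass the limit through the integral and obtain $\rho\,\bar F_H(x) \sim G(x)/m_-$ as $x\to\infty$, where $\bar F_H(x)=P(H>x)$. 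The balance identity $|\mu|=(1-\rho) m_-$ -- which follows from Wald at $\tau_-$ (giving $m_-=|\mu|\,E\tau_-$) combined with the Spitzer-type identity $E\tau_-=1/(1-\rho)$ -- then simplifies to $\bar F_H(x)\sim (1-\rho) G(x)/(\rho\,|\mu|)$.

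For (ii), the hypothesis that the distribution function $t\mapsto \int_0^t \bar F(s)\,ds/EX^+$ is subexponential is precisely subexponentiality of $G$; by step (i) and tail-equivalence $\bar F_H$ is subexponential as well. The convolution identity $P(H_1+\cdots+H_n>b)\sim n\,\bar F_H(b)$, combined with Kesten's lemma (which provides an $n$-uniform bound $P(H_1+\cdots+H_n>b)/\bar F_H(b)\leq K_\varepsilon (1+\varepsilon)^n$ for any $\varepsilon>0$), permits term-by-term passage to the limit via dominated convergence against the geometric weights:
\[
\frac{u(b)}{\bar F_H(b)} \ \longrightarrow\ (1-\rho)\sum_{n=1}^\infty n\rho^n \ =\ \frac{\rho}{1-\rho}.
\]
Inserting the asymptotic from step (i) collapses the constants to $u(b)\sim G(b)/|\mu|$, which is the claimed $u(b) = -(\mu^{-1}+o(1))G(b)$.

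The main obstacle is the interchange of $b\to\infty$ and $\sum_n$ in step (ii): Kesten's inequality is the standard workaround but depends crucially on subexponentiality of $\bar F_H$ (the long-tail hypothesis alone would not suffice), which is precisely why the assumption is placed on the integrated tail $G$ rather than on $\bar F$ itself. A secondary delicate point is the passage inside the renewal integral in step (i), where long-tailedness of $\bar F$ drives the pointwise limit but one also needs an integrable dominator against $V_-$; the subexponential / Potter-type bounds coming from the hypothesis on $G$ again deliver it. The ladder-height representation and the balance identity $|\mu|=(1-\rho)m_-$ are routine fluctuation-theory facts (see e.g.\ Asmussen, \emph{Applied Probability and Queues}, Ch.~VIII).
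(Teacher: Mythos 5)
The paper does not actually prove this statement: it is imported as the classical Pakes--Veraverbeke theorem, with a pointer to page 296 of Asmussen's \emph{Applied Probability and Queues}. Your argument --- the Pollaczeck--Khinchine geometric-sum representation of $\sup_n S_n$ over ascending ladder heights, the renewal-measure identity giving $\rho\,\bar F_H(x)\sim G(x)/m_-$, the balance identity $|\mu|=(1-\rho)m_-$ via Wald and $E\tau_-=1/(1-\rho)$, and Kesten's bound to justify interchanging $b\to\infty$ with the geometric sum --- is precisely the standard textbook proof of that cited result, and it is correct. One small correction of attribution in step (i): the needed domination does not come from ``Potter-type'' bounds (those belong to regular variation, not subexponentiality); what the classical argument uses is long-tailedness of $\bar F$ together with Blackwell's renewal theorem and the subadditivity bound $V_-[y,y+h]\leq V_-[0,h]$, so the hypothesis on $G$ (subexponentiality of the integrated tail) is genuinely needed only in step (ii), exactly as you note for Kesten's lemma.
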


We close this subsection with a series of lemmas involving several properties
which will be useful throughout the paper. The proofs of these results are given
in Appendix \ref{SecTech}.


\bigskip

\begin{lemma}
\label{LemRateFun}If B2 holds then $\lambda\left(  x\right)  =O\left(
x^{\beta_{0}-1}\right)  \rightarrow0$ as $x\rightarrow\infty$.
\end{lemma}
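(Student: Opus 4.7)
The plan is to use Assumption B2 as a differential inequality for $\log \Lambda(x)$ and integrate it to obtain a polynomial upper bound on $\Lambda(x)$, which then feeds back into B2 to control $\lambda(x)$.

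First I would rewrite Assumption B2 as $\frac{d}{dx}\log \Lambda(x) \leq \beta_0 / x$ valid for all $x \geq b_0$. Integrating both sides from $b_0$ to $x$ yields
\[
\log \Lambda(x) - \log \Lambda(b_0) \leq \beta_0 \bigl(\log x - \log b_0\bigr),
\]
so that $\Lambda(x) \leq \Lambda(b_0)(x/b_0)^{\beta_0}$ for $x \geq b_0$. In particular $\Lambda(x) = O(x^{\beta_0})$.

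Next I would feed this bound back into Assumption B2 itself: since $\lambda(x) \leq \beta_0 \Lambda(x)/x$ for $x \geq b_0$, combining with the previous inequality yields
\[
\lambda(x) \leq \frac{\beta_0 \Lambda(b_0)}{b_0^{\beta_0}} \, x^{\beta_0 - 1}
\]
for $x \geq b_0$, which is precisely $\lambda(x) = O(x^{\beta_0 - 1})$. Because $\beta_0 \in (0,1)$ by hypothesis, the exponent $\beta_0 - 1$ is strictly negative, so $x^{\beta_0 - 1} \to 0$ as $x \to \infty$, completing the claim.

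There is no genuine obstacle: the proof is a one-line Gronwall-type integration followed by substitution. The only point requiring a small bit of care is that Assumption B2 is only stated for $x \geq b_0$, but this is enough to derive the asymptotic statement, and the polynomial prefactor $\Lambda(b_0)/b_0^{\beta_0}$ is a harmless absolute constant determined by the distribution.
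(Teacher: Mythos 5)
Your proof is correct and follows essentially the same route as the paper: integrate the differential inequality $\partial\log\Lambda(x)\leq\beta_0/x$ from $b_0$ to $x$ to get $\Lambda(x)\leq\Lambda(b_0)(x/b_0)^{\beta_0}$, then substitute back into $\lambda(x)\leq\beta_0\Lambda(x)/x$. The paper states the integrated inequality directly without writing out the intermediate integration step, but the argument is identical.
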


\begin{lemma}
\label{LemIntTail} Under Assumption B3 there exists a constant $\kappa_{1}$
(depending on $a_{\ast}$) and $b_{0}$, such that for all $x\leq b-\Lambda
^{-1}(\Lambda(b)-a_{\ast})$ and $b>b_{0}$, the integrated tail satisfies
\[
G(b-x)/G(b)\leq\kappa_{1}.
\]

\end{lemma}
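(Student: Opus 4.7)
The plan is to exploit the fact that under concavity of $\Lambda$ on $[b_0,\infty)$, the integrand $e^{-\Lambda(s)}$ of the integrated tail is convex and log-concave near infinity, so the integrated tail behaves ``exponentially'' in the sense that $G(b)\asymp e^{-\Lambda(b)}/\lambda(b)$. The constraint $x\leq b-\Lambda^{-1}(\Lambda(b)-a_{\ast})$ will translate (via the mean value theorem applied to $\Lambda$) into the quantitative bound $x\lambda(b)\leq a_{\ast}$, and this is exactly what neutralizes the ``extra'' piece of integration in $G(b-x)-G(b)$.

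Concretely, I would first write
\[
G(b-x)=\int_{b-x}^{b}e^{-\Lambda(s)}ds+G(b),
\]
so that the lemma reduces to bounding $I:=\int_{b-x}^{b}e^{-\Lambda(s)}ds$ divided by $G(b)$. For the numerator, monotonicity of $\Lambda$ together with the assumption $b-x\geq\Lambda^{-1}(\Lambda(b)-a_{\ast})$ gives $\Lambda(s)\geq\Lambda(b)-a_{\ast}$ for $s\in[b-x,b]$, hence
\[
I\leq x\,e^{a_{\ast}}e^{-\Lambda(b)}.
\]
For the denominator, concavity of $\Lambda$ on $[b_0,\infty)$ yields the tangent-line bound $\Lambda(s)\leq\Lambda(b)+\lambda(b)(s-b)$ for $s\geq b\geq b_0$, so
\[
G(b)\geq e^{-\Lambda(b)}\int_{b}^{\infty}e^{-\lambda(b)(s-b)}ds=\frac{e^{-\Lambda(b)}}{\lambda(b)}.
\]
Taking the ratio gives $I/G(b)\leq e^{a_{\ast}}\,x\lambda(b)$.

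Finally, I would bound $x\lambda(b)$ using the constraint on $x$. Writing $a_{\ast}=\Lambda(b)-\Lambda(b-x)$ and applying the mean value theorem, $a_{\ast}=\lambda(\xi)x$ for some $\xi\in[b-x,b]$; since $\lambda$ is non-increasing on $[b_0,\infty)$ (the equivalent form of B3), we have $\lambda(\xi)\geq\lambda(b)$, hence $x\lambda(b)\leq a_{\ast}$. Combining the pieces,
\[
\frac{G(b-x)}{G(b)}\leq 1+e^{a_{\ast}}a_{\ast}=:\kappa_{1},
\]
uniformly over $b$ large enough that $b-x\geq\Lambda^{-1}(\Lambda(b)-a_{\ast})\geq b_0$; one can take any $b_0$ large enough to ensure this containment, which is possible because $\Lambda^{-1}(\Lambda(b)-a_{\ast})\to\infty$ as $b\to\infty$.

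The steps are all routine once the reduction is in place; the only mildly delicate point is the lower bound on $G(b)$, which genuinely uses B3 (concavity of $\Lambda$) rather than just a long-tail or subexponential property. I do not anticipate a major obstacle, but one must be careful that the estimates are applied only on $[b_0,\infty)$ where concavity is assumed; this is why the lemma is stated for $b$ exceeding a threshold depending on $a_{\ast}$.
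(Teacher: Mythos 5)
Your proof is correct but takes a genuinely different route from the paper. The paper first uses monotonicity of $G$ to reduce the claim to the single ratio $G(\Lambda^{-1}(\Lambda(b)-a_*))/G(b)$, then applies L'H\^opital's rule, writing the limit as $e^{a_*}\cdot\frac{d}{db}\Lambda^{-1}(\Lambda(b)-a_*)$, and finally bounds the derivative of the inverse by $1$ using monotonicity of $\lambda$. Your argument instead splits $G(b-x)=G(b)+\int_{b-x}^{b}e^{-\Lambda(s)}ds$, upper bounds the extra integral using only monotonicity of $\Lambda$ over $[b-x,b]$, lower bounds $G(b)$ via the tangent-line inequality furnished by concavity (B3), and closes with the mean-value theorem to obtain $x\lambda(b)\leq a_*$. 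Your approach buys a fully explicit and uniform constant $\kappa_1=1+a_*e^{a_*}$, whereas the paper's L'H\^opital argument establishes only the existence and value of the asymptotic limit, from which boundedness then follows by continuity on a compact set of $b$ -- slightly less direct if one wants an explicit $\kappa_1$. Both use B3 in an essential way: you use it for the tangent-line lower bound on $G(b)$ and for $\lambda(\xi)\geq\lambda(b)$; the paper uses it to bound the derivative of $\Lambda^{-1}(\Lambda(\cdot)-a_*)$. One cosmetic point: the constraint $x\leq b-\Lambda^{-1}(\Lambda(b)-a_*)$ gives $\Lambda(b)-\Lambda(b-x)\leq a_*$ with inequality rather than equality, so ``$a_*=\lambda(\xi)x$'' should read ``$\lambda(\xi)x=\Lambda(b)-\Lambda(b-x)\leq a_*$''; the conclusion $x\lambda(b)\leq a_*$ is unaffected.
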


\begin{lemma}
\label{LemIntTail1} Suppose B1 and B3 are in force. For each $\varepsilon
_{0}>0$, there exists $b_{0}>0$ such that
\[
\varepsilon_{0}^{-1}\bar{F}(b)\leq G(b)\leq\varepsilon_{0}b\bar{F}(b),
\]
for all $b\geq b_{0}$. In particular, $\bar{F}(b)/G\left(  b\right)
=o\left(  1\right)  $ as $b\longrightarrow\infty$. If Assumption A holds then for each $\delta_{0}>0$ we can select $b_0>0$
sufficiently large so that
\[
\frac{1-\delta_{0}}{\iota -1}b\bar{F}(b)\leq G(b)\leq \frac{1+\delta_{0}}{\iota -1}b\bar{F}(b).
\]
for $b\geq b_{0}$, where $\iota$ is the tail index of $\bar F$ defined in
Assumption A.
\end{lemma}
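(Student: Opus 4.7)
The plan is to treat the two statements separately, since the regularly varying case admits a sharp Karamata-type asymptotic while the general case requires two one-sided estimates obtained by different devices. For the part under Assumption~A, I would simply invoke Karamata's theorem: since $\bar F(x) = L(x)x^{-\iota}$ is regularly varying with index $-\iota < -1$, one has $G(b) = \int_b^\infty \bar F(s)\,ds \sim b\bar F(b)/(\iota-1)$ as $b \to \infty$, which immediately yields the $(1\pm \delta_0)/(\iota-1)$ two-sided bound for any $\delta_0 > 0$ and $b \geq b_0$ sufficiently large.

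For the general case under B1 and B3, I would prove the two inequalities separately, leveraging each assumption in turn. For the \emph{upper bound}, fix $M > 1 + \varepsilon_0^{-1}$; by B1, $\lambda(x) \geq M/x$ for all $x$ large enough, so for $s \geq b$ above that threshold,
\[
\Lambda(s) - \Lambda(b) = \int_b^s \lambda(u)\,du \geq M \log(s/b),
\]
whence $\bar F(s) \leq \bar F(b)(b/s)^M$. Integrating over $s \in [b,\infty)$ gives $G(b) \leq b\bar F(b)/(M-1) < \varepsilon_0\, b\bar F(b)$. For the \emph{lower bound}, concavity of $\Lambda$ (B3) supplies the tangent-line inequality $\Lambda(s) \leq \Lambda(b) + \lambda(b)(s-b)$ for $s \geq b$, so $\bar F(s) \geq \bar F(b)\,e^{-\lambda(b)(s-b)}$ and
\[
G(b) \geq \bar F(b) \int_b^\infty e^{-\lambda(b)(s-b)}\,ds = \frac{\bar F(b)}{\lambda(b)}.
\]
Since $\lambda$ is non-increasing (B3), $\lambda(b)$ tends to some $L \geq 0$; in the heavy-tailed setting that frames the paper, $L = 0$, so $1/\lambda(b) \to \infty$ and eventually $1/\lambda(b) \geq \varepsilon_0^{-1}$, giving $G(b) \geq \varepsilon_0^{-1}\bar F(b)$. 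The consequence $\bar F(b)/G(b) = o(1)$ then follows from either of the two bounds.

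The main delicate point is justifying $\lim_{b\to\infty}\lambda(b) = 0$: B1 and B3 by themselves allow a non-increasing $\lambda$ to converge to a positive constant (e.g.\ $\lambda \equiv c$), so one must invoke the ambient assumption that $X$ is heavy-tailed (equivalently, $\bar F$ has no finite exponential moment) to exclude this possibility; if $\lambda(b) \to L > 0$, then $\Lambda(b) \geq Lb + O(1)$ and $\bar F$ decays exponentially, violating heavy-tailedness. Once this step is granted, the remainder of the argument reduces to routine estimates of $\int_b^\infty e^{-\Lambda(s)}\,ds$ via the tangent-line comparison (for the lower bound) and the logarithmic lower bound on $\Lambda$ coming from B1 (for the upper bound).
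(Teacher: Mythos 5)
Your proof is correct, and it takes a genuinely different route from the paper. The paper handles the general (Assumption B) case with two applications of L'H\^opital's rule: differentiating $\bar F(x)/G(x)$ gives the limit $\lim\lambda(x)=0$ for the lower bound, and differentiating $x\bar F(x)/G(x)$ gives the limit $\lim(x\lambda(x)-1)=\infty$ for the upper bound, citing Lemma \ref{LemRateFun} (which uses B2) for $\lambda\to 0$. You instead establish both inequalities by direct integral comparison: using B1 you lower-bound $\Lambda(s)-\Lambda(b)$ by $M\log(s/b)$ to dominate $G$ by a power-law integral, and using B3 you upper-bound $\Lambda(s)$ by its tangent line to dominate $G$ from below by an exponential integral. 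This is more elementary and it cleanly separates the role of each assumption (B1 controls the upper bound, B3 controls the lower bound), which the L'H\^opital route obscures. Your approach also avoids any concern about L'H\^opital requiring the existence of the limit of the derivative ratio, although in the paper's case this is unproblematic because B3 makes $\lambda$ eventually monotone.

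You correctly flagged the delicate step: neither B1 nor B3 by itself forces $\lambda(b)\to0$. The paper quietly relies on Lemma \ref{LemRateFun}, which needs B2 (a hypothesis not listed in the statement of Lemma \ref{LemIntTail1}), while you invoke the standing heavy-tailedness assumption to rule out $\lambda\to L>0$. Both repairs are acceptable, and noticing that the hypotheses as stated are not quite self-contained is a genuine observation. For the regularly varying part, both you and the paper cite Karamata's theorem, so that part coincides.
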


\begin{lemma}
\label{LemCen} Suppose B2 holds, for all $x\geq b_{0}$ and $y\geq0$ we have%
\[
\frac{\Lambda(x)}{\Lambda(x+y)}\geq\left(  \frac{x}{x+y}\right)  ^{\beta_{0}%
}.
\]

\end{lemma}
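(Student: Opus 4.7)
The plan is to recognize that Assumption B2 is a differential inequality on $\log\Lambda$ and to integrate it. Specifically, since $\Lambda$ is differentiable for $x\geq b_0$ (this regularity is noted in the remark following Assumptions A and B) and $\lambda(x) = \Lambda'(x)$ on that range, Assumption B2 reads
\[
\frac{d}{dx}\log\Lambda(x) \;=\; \frac{\lambda(x)}{\Lambda(x)} \;\leq\; \frac{\beta_0}{x} \;=\; \beta_0\,\frac{d}{dx}\log x,
\]
for all $x\geq b_0$.

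I would then integrate this inequality from $x$ to $x+y$ (both endpoints at or above $b_0$, since $x\geq b_0$ and $y\geq 0$), obtaining
\[
\log\Lambda(x+y) - \log\Lambda(x) \;\leq\; \beta_0\bigl(\log(x+y) - \log x\bigr).
\]
Exponentiating yields
\[
\frac{\Lambda(x+y)}{\Lambda(x)} \;\leq\; \left(\frac{x+y}{x}\right)^{\beta_0},
\]
and taking reciprocals gives exactly the claimed bound
$\Lambda(x)/\Lambda(x+y) \geq (x/(x+y))^{\beta_0}$.

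There is no substantive obstacle: the argument is a one-line Gr\"onwall-style integration of the hypothesis, and the only technical point to verify is that $\lambda = \Lambda'$ on $[b_0,\infty)$ so that the fundamental theorem of calculus applies to $\log\Lambda$. This is immediate from the density representation $f(x) = \lambda(x)e^{-\Lambda(x)}$ assumed at the beginning of Section \ref{SectionMainResults}.
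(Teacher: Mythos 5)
Your proof is correct and is essentially identical to the paper's: both integrate the inequality $\partial\log\Lambda(t)\leq\beta_0/t$ from $x$ to $x+y$ and exponentiate. The paper writes the same integration step in one display and then notes the resulting inequality is equivalent to the claim.
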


\begin{lemma}
\label{LemOvershoot} Suppose B2 is satisfied. Then, we can choose $b_{0}>0$
sufficiently large such that
\[
x-\Lambda^{-1}(\Lambda(x)-a_{\ast})\geq x^{(1-\beta_{0})/2},
\]
for all $x>b_{0}$.
\end{lemma}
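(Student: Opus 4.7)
The plan is to set $y := \Lambda^{-1}(\Lambda(x)-a_{*})$, so that the target inequality becomes $x-y \geq x^{(1-\beta_0)/2}$, and to exploit the identity $a_{*} = \int_{y}^{x}\lambda(t)\,dt$ together with Assumption~B2 to upper-bound the integrand.

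First, I would integrate $\lambda(t)/\Lambda(t)\leq\beta_0/t$ from $b_0$ to $x$ to obtain
\[
\Lambda(x) \leq C\, x^{\beta_0}\qquad (x\geq b_0),\qquad C:=\Lambda(b_0)/b_0^{\beta_0}.
\]
Enlarging $b_0$ if necessary so that $\Lambda(x)\geq\Lambda(b_0)+a_{*}$ for all $x\geq b_0$ (possible since $\Lambda\to\infty$ for any proper heavy-tailed distribution), we secure $y\geq b_0$, so that B2 is valid throughout $[y,x]$. By monotonicity of $\Lambda$, for $t\in[y,x]$,
\[
\lambda(t) \;\leq\; \frac{\beta_0\,\Lambda(t)}{t} \;\leq\; \frac{\beta_0\,\Lambda(x)}{y}.
\]
Integrating this constant-in-$t$ bound over $[y,x]$ yields $a_{*}\leq \beta_0\Lambda(x)(x-y)/y$, hence the key estimate
\[
x-y \;\geq\; \frac{a_{*}\,y}{\beta_0\,\Lambda(x)}.
\]

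To finish I would split into two cases. If $y\leq x/2$, then $x-y\geq x/2$, which trivially exceeds $x^{(1-\beta_0)/2}$ for $x$ large. Otherwise $y>x/2$; combining the previous display with $\Lambda(x)\leq Cx^{\beta_0}$ yields
\[
x-y \;\geq\; \frac{a_{*}}{2\beta_0\,C}\, x^{1-\beta_0},
\]
and since $\beta_0\in(0,1)$ we have $1-\beta_0 > (1-\beta_0)/2$, so this also dominates $x^{(1-\beta_0)/2}$ once $x$ is large enough to absorb the constant. I do not anticipate a real obstacle: the argument is routine once the dichotomy on $y/x$ is in place. The only mildly delicate point is that the ``$y\approx x$'' regime is where the polynomial growth bound $\Lambda(x)=O(x^{\beta_0})$ (i.e., Lemma~\ref{LemRateFun}) is genuinely needed, and the deliberately weakened exponent $(1-\beta_0)/2$ in the statement comfortably absorbs any constant factors accrued during the estimate.
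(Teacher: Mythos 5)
Your argument is correct and reaches the same conclusion by a somewhat different route than the paper. The paper rewrites the target as $\Lambda(x)-\Lambda(x-x^{\alpha})\leq a_{*}$ with $\alpha=(1-\beta_0)/2$, then bounds the left side via the multiplicative comparison of Lemma~\ref{LemCen} (itself an integrated form of B2), a Taylor expansion of $(1-x^{\alpha-1})^{-\beta_0}$, and the polynomial growth $\Lambda(x)=O(x^{\beta_0})$. You instead lower-bound the unknown gap $x-y$ directly: you integrate the pointwise bound $\lambda(t)\leq\beta_0\Lambda(x)/y$ over $[y,x]$ to get $x-y\geq a_{*}y/(\beta_0\Lambda(x))$, and then split on $y\leq x/2$ versus $y>x/2$. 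Both routes rest on the same two consequences of B2 (hazard decay $\lambda(x)=O(x^{\beta_0-1})$ and cumulative-hazard growth $\Lambda(x)=O(x^{\beta_0})$), and both actually deliver the stronger exponent $1-\beta_0-\epsilon$; the lemma's $(1-\beta_0)/2$ is deliberate slack. Your version is more self-contained (no Lemma~\ref{LemCen}, no Taylor step) at the cost of the two-case dichotomy, which the paper avoids by working with the fixed candidate gap $x^{\alpha}$ rather than the unknown $x-y$. One small bookkeeping slip: ``enlarging $b_0$ so that $\Lambda(x)\geq\Lambda(b_0)+a_{*}$ for all $x\geq b_0$'' fails at $x=b_0$; you should keep the threshold $b_0$ of Assumption B2 fixed (so the constant $C=\Lambda(b_0)/b_0^{\beta_0}$ is determined) and pick a separate, larger threshold $b_1$ for the lemma's conclusion so that $\Lambda(x)-a_{*}\geq\Lambda(b_0)$ for $x\geq b_1$, which guarantees $y\geq b_0$ and validates B2 on $[y,x]$.
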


The following lemma allows us to conclude that the Pakes-Veraberbeke theorem
is applicable in our setting.

\begin{lemma}
\label{LemSubExp}Under either Assumption A or B1-3, both $F(x)$ and $\int
_{0}^{x }P\left(  X>s\right)  ds /\left(  EX^{+}\right)  $ are subexponential
as a function of $x$.
\end{lemma}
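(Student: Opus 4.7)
\textbf{Proof Plan for Lemma \ref{LemSubExp}.} The plan is to handle the two assumptions separately, dispatching Assumption A by classical regular variation arguments and treating Assumption B via Pitman's criterion (Proposition \ref{PropPitmanC}) applied to both $F$ and to the integrated tail distribution
\[
F_I(x)=\frac{1}{EX^+}\int_0^x \bar F(s)\,ds,\qquad \bar F_I(x)=\frac{G(x)}{EX^+}.
\]

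\emph{Assumption A.} If $\bar F$ is regularly varying with index $-\iota$, $\iota>1$, then $F$ itself is subexponential (a classical fact; see e.g.\ \cite{EKM97}). By Karamata's theorem $G(x)=\int_x^\infty \bar F(s)\,ds$ is regularly varying with index $-(\iota-1)$, hence $\bar F_I=G/EX^+$ is also regularly varying and therefore subexponential. This disposes of the regularly varying case.

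\emph{Assumption B, subexponentiality of $F$.} Because $\Lambda$ is concave for $x\ge b_0$ (B3), Pitman's criterion applies once we verify integrability of $\exp(x\lambda(x)-\Lambda(x))$ at infinity. By B2, $x\lambda(x)\le \beta_0\Lambda(x)$, so $x\lambda(x)-\Lambda(x)\le -(1-\beta_0)\Lambda(x)$. From B1, $x\lambda(x)\to\infty$ forces $\lambda(x)\ge M/x$ eventually for any $M>0$, hence $\Lambda(x)/\log x\to\infty$. Therefore $\exp(-(1-\beta_0)\Lambda(x))$ decays faster than any polynomial and is integrable, so $F$ is subexponential.

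\emph{Assumption B, subexponentiality of $F_I$.} I would apply Pitman's criterion to $F_I$. A short computation gives $\lambda_I(x)=\bar F(x)/G(x)$ and
\[
\lambda_I'(x)=\lambda_I(x)\bigl(\lambda_I(x)-\lambda(x)\bigr).
\]
The key observation is that concavity of $\Lambda$ implies $\Lambda(s)\le \Lambda(x)+\lambda(x)(s-x)$ for $s\ge x\ge b_0$, so
\[
G(x)=\int_x^\infty e^{-\Lambda(s)}\,ds\ge \int_x^\infty e^{-\Lambda(x)-\lambda(x)(s-x)}\,ds=\frac{\bar F(x)}{\lambda(x)},
\]
which gives $\lambda_I(x)\le \lambda(x)$ and hence $\lambda_I'(x)\le 0$. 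Thus $\Lambda_I$ is (eventually) concave, the first hypothesis of Pitman. For the integrability condition, using $\lambda_I\le\lambda$ together with B2 one has $x\lambda_I(x)\le x\lambda(x)\le \beta_0\Lambda(x)$, and by Lemma \ref{LemIntTail1} we can bound $G(x)\le \varepsilon_0 x\bar F(x)=\varepsilon_0 x e^{-\Lambda(x)}$. Combining,
\[
\exp\!\bigl(x\lambda_I(x)-\Lambda_I(x)\bigr)=\exp\!\bigl(x\lambda_I(x)\bigr)\frac{G(x)}{EX^+}\le C\,x\,e^{-(1-\beta_0)\Lambda(x)},
\]
which is integrable at infinity by the same $\Lambda(x)/\log x\to\infty$ estimate used above. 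Pitman's criterion then yields subexponentiality of $F_I$.

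\emph{Main obstacle.} The one non-mechanical step is verifying Pitman's hypotheses for $F_I$ under Assumption B; concretely, obtaining the inequality $\lambda_I\le\lambda$ via the tangent bound from concavity of $\Lambda$, which simultaneously yields concavity of $\Lambda_I$ and the controlling bound $x\lambda_I(x)\le \beta_0\Lambda(x)$ needed for the tail integral to converge. Everything else reduces to direct application of B1--B3 and Lemma \ref{LemIntTail1}.
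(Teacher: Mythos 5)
Your proof is correct. For Assumption A and for the subexponentiality of $F$ under Assumption B, your argument is essentially the one in the paper: classical facts in the regularly varying case, and Pitman's criterion with the B2 bound $x\lambda(x)-\Lambda(x)\le -(1-\beta_0)\Lambda(x)$ together with B1 forcing $\Lambda(x)/\log x\to\infty$.

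For the integrated tail under Assumption B, however, you take a genuinely different and more self-contained route. The paper instead verifies the B2-type bound $\limsup_{x\to\infty}\frac{x\bar F(x)}{-G(x)\log G(x)}\le\beta_0<1$ via L'Hopital's rule and then states that one should ``apply the same analysis,'' leaving the concavity of $\Lambda_I$ and the remaining Pitman hypotheses to be checked implicitly. You verify both hypotheses of Pitman directly for $F_I$: the tangent-line estimate from concavity of $\Lambda$ (namely $\Lambda(s)\le\Lambda(x)+\lambda(x)(s-x)$ for $s\ge x$) gives $G(x)\ge\bar F(x)/\lambda(x)$, hence $\lambda_I(x)=\bar F(x)/G(x)\le\lambda(x)$; combined with the identity $\lambda_I'=\lambda_I(\lambda_I-\lambda)$ this yields $\lambda_I'\le 0$ and eventual concavity of $\Lambda_I$, and it also immediately bounds the Pitman integrand via $x\lambda_I\le x\lambda\le\beta_0\Lambda$ and Lemma \ref{LemIntTail1}, yielding $\exp(x\lambda_I(x)-\Lambda_I(x))\le Cxe^{-(1-\beta_0)\Lambda(x)}$. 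What each approach buys: the paper's is shorter on the page but relies on the reader to re-run the entire Pitman argument for $F_I$ including concavity of $\Lambda_I$; yours is longer but explicit and makes the single inequality $\lambda_I\le\lambda$ do all the work at once, which I find cleaner. No gaps.
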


\subsection{State-dependent importance sampling for the first passage time
random walk problem and Lyapunov inequalities}

Consider two probability measures $P$ and $Q$ on a given space $\mathcal{X}$
with $\sigma$-algebra $\mathcal{F}$. If the Radon-Nikodym derivative
$\frac{dP}{dQ}(\omega)$ is well defined on the set $A\in\mathcal{F}$, then
\[
P(A)=\int\frac{dP}{dQ}(\omega)I_{A}\left(  \omega\right)  Q(d\omega).
\]
We say that the random variable $\frac{dP}{dQ}(\omega)I_{A}\left(
\omega\right)  $ is the importance sampling estimator associated to the
change of measure / importance sampling distribution $Q$. If one chooses
$Q^{\prime}$ such that for each $B\in\mathcal{F}$,
\[
Q^{\prime}(B)=P(B\cap A)/P(A),
\]
then, $\frac{dP}{dQ^{\prime}}\equiv P(A)$ almost surely on the set $A$ and
therefore the estimator $\frac{dP}{dQ^{\prime}}\left(  \omega\right)  $ has
zero variance. This implies that the best importance sampling distribution
(with zero variance for estimating $P(A)$) is the conditional distribution
given the event $A$ occurs.

Certainly, this zero variance estimator is not implementable in practice,
because the Radon-Nikodym derivative involves precisely computing $P(A)$,
which is the quantity to compute. Nevertheless, it provides a general
guideline on how to construct efficient importance sampling estimators: try to
mimic the conditional distribution given the event of interest.

In the context of this paper, we consider a random walk $(S_{n}:n\geq0)$ with
$S_{0}=0$ and therefore%
\[
P(X_{n+1}\in dx|S_{1},...,S_{n})=F(dx).
\]
A state-dependent importance sampling distribution $Q$ is such that
\begin{equation}
Q(X_{n+1}\in dx|S_{1},...,S_{n})=r_{S_{n}}^{-1}(x)F(dx), \label{MarkovCoM}%
\end{equation}
where, the function $(r_{s}\left(  x\right)  :s,x\in R)$ is non-negative and
it satisfies
\[
\int_{-\infty}^{\infty}r_{S_{n}}^{-1}(x)F(dx)=1.
\]

Now, consider the stopping time $\tau_{b}=\inf\{n\geq0:S_{n}>b\}$ and set
$A_{b}=\{\tau_{b}<\infty\}$, then it follows easily that%
\[
P(A_{b})=E^{Q}\left\{  I_{A_{b}}\prod_{i=1}^{\tau_{b}}r_{S_{i-1}}%
(S_{i}-S_{i-1})\right\}  .
\]
\textbf{Notational convention: }throughout the paper we shall use $E_{s}%
^{Q}\left(  \cdot\right)  $ to denote the expectation operator induced by
(\ref{MarkovCoM}) assuming that $S_{0}=s$. We simply write $E^{Q}\left(
\cdot\right)  $ whenever $S_{0}=0$.

\bigskip

We will work with the specific parametric selection of $r_{s}(x)$ introduced
in Section 2. In proving some of our main results we will be interested in
finding an upper bound for the second moment of our estimator under
$E^{Q}\left(  \cdot\right)  $, namely%
\[
E^{Q}\left\{  I_{A}\prod_{i=1}^{\tau_{b}}r_{S_{i-1}}^{2}(S_{i}-S_{i-1}%
)\right\}  =E\left\{  I_{A}\prod_{i=1}^{\tau_{b}}r_{S_{i-1}}(S_{i}%
-S_{i-1})\right\}  .
\]
In general, the $(1+\gamma)$-th moment ($\gamma>0$) of our estimator satisfies%
\[
E\left\{  I_{A}\prod_{i=1}^{\tau_{b}}r_{S_{i-1}}(S_{i}-S_{i-1})^{\gamma
}\right\}
\]
The next lemma provides the mechanism that we shall use to obtain upper bounds
for these quantities. The proof can be found in \cite{BlaGly07}.

\begin{lemma}
\label{LemLyap} Assume that there exists a non-negative function
$g:\mathbb{R}\rightarrow\mathbb{R}^{+}$, such that for all $s<b$,
\[
g(s)\geq E(g(s+X)r_{s}(X)^{\gamma}),
\]
where $X$ is a random variable with density $f(\cdot)$ and suppose that for
all $s\geq b$, $g(s)\geq\varepsilon$. Then,
\begin{equation}
g(0)\geq\varepsilon E\left\{  I_{A}\prod_{i=1}^{\tau_{b}}r_{S_{i-1}}%
(S_{i}-S_{i-1})^{\gamma}\right\}  . \label{LypIneq}%
\end{equation}

\end{lemma}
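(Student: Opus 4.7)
The plan is a standard non-negative supermartingale and optional-stopping argument under the original measure $P$. Define the weights $W_n=\prod_{i=1}^{n}r_{S_{i-1}}(S_i-S_{i-1})^{\gamma}$ (with $W_0=1$) and the product process $M_n=g(S_n)W_n$; both are non-negative and adapted to the natural filtration $\mathcal{F}_n=\sigma(S_1,\ldots,S_n)$ generated by the random walk under $P$.

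The key step is to verify that the stopped process $\widetilde M_n:=M_{n\wedge\tau_b}$ is a $P$-supermartingale with respect to $(\mathcal{F}_n)$. Since $X_{n+1}\sim F$ is independent of $\mathcal{F}_n$ under $P$,
\[
E\bigl[M_{n+1}\,\big|\,\mathcal{F}_n\bigr]=W_n\cdot E\bigl[g(S_n+X)\,r_{S_n}(X)^{\gamma}\,\big|\,S_n\bigr].
\]
On the event $\{\tau_b>n\}$ one has $S_n\leq b$; the boundary value $S_n=b$ occurs on a $P$-null set because $X$ admits a density, so almost surely on $\{\tau_b>n\}$ the hypothesis $g(s)\geq E[g(s+X)r_s(X)^{\gamma}]$ applies with $s=S_n<b$, giving $E[M_{n+1}\mid\mathcal{F}_n]\leq M_n$. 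Splitting $\widetilde M_{n+1}$ according to whether $\tau_b\leq n$ or $\tau_b>n$ and using $\{\tau_b\leq n\}\in\mathcal{F}_n$ then yields $E[\widetilde M_{n+1}\mid\mathcal{F}_n]\leq\widetilde M_n$.

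The supermartingale property, evaluated at the bounded times $n\wedge\tau_b$, gives $g(0)=M_0\geq E[\widetilde M_n]$ for every $n\geq 0$. On $\{\tau_b\leq n\}$ the walk has overshot at time $\tau_b$, so $S_{\tau_b}>b$ and $g(S_{\tau_b})\geq\varepsilon$ by the boundary condition on $g$; hence $\widetilde M_n\geq\varepsilon\,I(\tau_b\leq n)\,W_{\tau_b}$. Taking expectations and letting $n\to\infty$ via monotone convergence (the integrand is non-negative and increases to $W_{\tau_b}I(\tau_b<\infty)$) yields the desired bound $g(0)\geq\varepsilon E[I(\tau_b<\infty)\prod_{i=1}^{\tau_b}r_{S_{i-1}}(S_i-S_{i-1})^{\gamma}]$.

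The only mild subtlety is the treatment of the boundary event $\{S_n=b\}$, which is dispatched painlessly by the density assumption on $X$; no property of heavy tails or of the specific form of $q_s$ enters the proof, which is why this Lyapunov inequality functions as a purely structural tool on which the subsequent variance and termination-time analyses of the paper rest.
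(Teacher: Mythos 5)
Your proof is correct and follows the standard supermartingale route for this kind of Lyapunov inequality, which is the same argument as in the reference \cite{BlaGly07} that the paper cites for this lemma. The key points are all in order: the stopped process $\widetilde M_n = M_{n\wedge\tau_b}$ is handled by splitting on $\{\tau_b\leq n\}\in\mathcal{F}_n$ versus $\{\tau_b>n\}$; the drift condition is applied at $s=S_n$ on the latter event with the boundary null set $\{S_n=b\}$ correctly discarded using absolute continuity of $S_n$; and the lower bound $\widetilde M_n\geq\varepsilon\,I(\tau_b\leq n)\,W_{\tau_b}$ together with monotone convergence gives the claim. One implicit point worth making explicit if you write this up: integrability of $\widetilde M_n$ is needed to invoke the supermartingale inequality at each step, and it follows by induction from $E[\widetilde M_0]=g(0)<\infty$ together with the same conditional inequality that establishes the supermartingale property.
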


\bigskip

Most of the time we will work with $\gamma=1$ (i.e. we concentrate on the
second moment). The inequality (\ref{LypIneq}) is said to be a Lyapunov
inequality. The function $g$ is called a Lyapunov function. Lemma
\ref{LemLyap} provides a handy tool to derive an upper bound of the second
moment of the importance sampling estimator. However, the lemma does not
provide a recipe on how to construct a suitable Lyapunov function. We will
discuss the intuition behind the construction of our Lyapunov function in
future sections.

\bigskip

If $r_{s}(x)$ has been chosen in such a way that the second moment of the
importance sampling estimator can be suitably controlled by an appropriate
selection of a Lyapunov function $g$, we still need to make sure that the cost
per replication (i.e. $E^{Q}\tau_{b}$) is suitably controlled as well. The
next lemma, which follows exactly the same steps as in the first part of the
proof in Theorem 11.3.4 of \cite{MT93}, establishes a Lyapunov criterion
required to control the behavior of $E^{Q}\tau_{b}$.

\begin{lemma}
\label{LemTM}Suppose that one can find a non-negative function $h(\cdot)$ and
a constant $\rho>0$ so that
\[
E_{s}^{Q}(h(s+X))\leq h(s)-\rho,
\]
for $s<b$. Then, $E^{Q}(\tau_{b}|S_{0}=s)\leq h(s)/\rho$ for $s<b$.
\end{lemma}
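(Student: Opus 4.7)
The plan is to build a non-negative supermartingale from the Lyapunov function $h$ and read off the bound on $E^Q\tau_b$ via optional stopping (monotone convergence), following the standard Foster--Lyapunov argument of Meyn--Tweedie.

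First I would define, for $n\geq 0$,
\[
M_n \;=\; h(S_{n\wedge \tau_b}) \;+\; \rho\,(n\wedge \tau_b),
\]
taken under $E^Q_s(\cdot)$ (with $S_0=s<b$). On the event $\{n<\tau_b\}$ we have $S_n<b$, and the Markovian form of $Q$ from \eqref{MarkovCoM} together with the hypothesis give
\[
E^Q_s\!\left[h(S_{n+1})\mid \mathcal{F}_n\right]\;=\;E^Q_{S_n}\!\left[h(S_n+X)\right]\;\leq\; h(S_n)-\rho.
\]
Hence $E^Q_s[M_{n+1}\mid \mathcal{F}_n]\leq M_n$ on $\{n<\tau_b\}$, and trivially $M_{n+1}=M_n$ on $\{n\geq \tau_b\}$. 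Combining the two cases shows that $\{M_n\}$ is a non-negative $(\mathcal{F}_n)$-supermartingale under $Q$.

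Second, I would iterate the supermartingale inequality to get $E^Q_s[M_n]\leq M_0=h(s)$ for every $n\geq 0$. Dropping the non-negative term $h(S_{n\wedge \tau_b})$ yields
\[
\rho\, E^Q_s[n\wedge \tau_b]\;\leq\; h(s).
\]
Since $n\wedge \tau_b\nearrow \tau_b$ as $n\to\infty$, the monotone convergence theorem gives $\rho\,E^Q_s[\tau_b]\leq h(s)$, i.e. $E^Q_s[\tau_b]\leq h(s)/\rho$, which is exactly the claim.

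There is no real obstacle here; the only mildly delicate point is handling the possibility that $\tau_b=\infty$ with positive probability under $Q$. This is precisely why I pass through the truncated stopping time $n\wedge \tau_b$ and use monotone convergence, rather than trying to apply optional stopping directly to $\tau_b$. The argument requires nothing about $h$ beyond non-negativity and the one-step drift inequality, and uses the Markov property of the importance sampling dynamics defined in \eqref{MarkovCoM}. This is identical in structure to the first part of the proof of Theorem 11.3.4 in \cite{MT93}, which the statement already invokes.
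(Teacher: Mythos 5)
Your argument is correct and is precisely the Foster--Lyapunov comparison that the paper invokes by citing the first part of the proof of Theorem 11.3.4 in Meyn--Tweedie (the paper does not spell it out). The supermartingale $M_n = h(S_{n\wedge\tau_b}) + \rho\,(n\wedge\tau_b)$ together with monotone convergence is exactly the intended route.
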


\bigskip

Most of the results discussed in Section 2 of the paper involve constructing
suitable selections of Lyapunov functions $g$ and $h$ appearing in the
previous lemmas. The construction of these functions is given in subsequent sections.

\section{Lyapunov function for variance control}

\label{SectionAnalysis}

Our approach to designing efficient importance sampling estimators consist of
three steps:

\begin{enumerate}
\item Propose a family of change of measures suitably parameterized.

\item Propose candidates of  Lyapunov functions using fluid heuristics and also depending on
appropriate parameters.

\item Verify the Lyapunov inequality by choosing appropriate parameters for
the change of measure and the Lyapunov function.
\end{enumerate}

Our family has been introduced in Section 2. This corresponds to the first
step. The second and third steps are done simultaneously. We will choose the
parameters $\eta_{*}$, the $c_{j}$'s, $a_{\ast}$, $a_{\ast\ast}$, $p_{\ast}$,
$p_{\ast\ast}$ and the $p_{j}$'s of our change of measure in order to satisfy
an appropriate Lyapunov function for variance control by means of Lemma
\ref{LemLyap}. Some of the parameters, in particular the $c_{j}$'s, can be set
in advance without resorting to the appropriate Lyapunov function. The key
element is given in the next lemma, whose proof is given in the appendix.

\begin{lemma}
\label{LemWeibull} Fix $\beta_{0}\in(0,1)$ and select $\sigma_{1}>0$ sufficiently small such that for every $x\in [0,\sigma_1]$ $2- 2(1-x)^{\beta_0} - x^{\beta_0}\leq 0$. Then, there
exists $\sigma_{2}>0$ and a sequence, $0<a_{1}<a_{2}<\cdots<a_{k-1}<1$ such
that $a_{j+1}-a_{j}\leq\sigma_{1}/2$ for each $1\leq j\leq k-2$,
\[
a_{j}^{\beta_{0}}+(1-a_{j+1})^{\beta_{0}}\geq1+\sigma_{2}.
\]
and $a_{k-1}\geq1-\sigma_{1}$, $a_{1}\leq\sigma_{1}$.
\end{lemma}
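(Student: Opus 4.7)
My plan is to prove the lemma in three steps: confirm that the required $\sigma_1$ exists, write down an explicit uniformly spaced grid that automatically satisfies the boundary and spacing constraints, and reduce the main inequality to a single extremal configuration via concavity. For Step 1, I would inspect $g(x) := 2 - 2(1-x)^{\beta_0} - x^{\beta_0}$: clearly $g(0) = 0$, and $g'(x) = 2\beta_0(1-x)^{\beta_0-1} - \beta_0 x^{\beta_0-1}$, which tends to $-\infty$ as $x \downarrow 0$ because $\beta_0 - 1 < 0$. Hence $g$ strictly decreases on a right-neighborhood of $0$, giving $g \le 0$ on some $[0,\sigma_1]$.

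For Step 2, the simplest choice is a uniform grid: take $M := \lceil 2/\sigma_1 \rceil$, set $k := M$, and define $a_j := j/M$ for $j = 1, \ldots, M-1$. Then the spacing $1/M \le \sigma_1/2$, the upper-boundary condition $a_{k-1} = 1 - 1/M \ge 1 - \sigma_1$, and the lower-boundary condition $a_1 = 1/M \le \sigma_1$ are all immediate. For Step 3, fix $j \in \{1,\ldots,M-2\}$ and consider $\varphi(u) := u^{\beta_0} + \bigl(\tfrac{M-1}{M} - u\bigr)^{\beta_0}$ on $[1/M, (M-2)/M]$. Because $\beta_0 \in (0,1)$, $\varphi''(u) < 0$ throughout, so $\varphi$ is strictly concave and is minimized on the interval at an endpoint; the symmetry $u \leftrightarrow \tfrac{M-1}{M} - u$ makes both endpoint values coincide with $(1/M)^{\beta_0} + (1 - 2/M)^{\beta_0}$. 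Since $2/M \le \sigma_1$, the hypothesis on $\sigma_1$ gives $(1-2/M)^{\beta_0} \ge 1 - (2/M)^{\beta_0}/2$, and using $(2/M)^{\beta_0}/2 = 2^{\beta_0-1}(1/M)^{\beta_0}$ one obtains
\[
(1/M)^{\beta_0} + (1 - 2/M)^{\beta_0} \ge 1 + (1/M)^{\beta_0}\bigl(1 - 2^{\beta_0-1}\bigr),
\]
so $\sigma_2 := (1/M)^{\beta_0}(1 - 2^{\beta_0-1}) > 0$ works.

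The only delicate point, and the only place where the hypothesis on $\sigma_1$ actually enters, is Step 3 applied to the extremal pairs $(a_1, a_2)$ and $(a_{M-2}, a_{M-1})$. For intermediate $j$, concavity already places $\varphi(j/M)$ comfortably above $1$; but at the extremes one coordinate equals $1/M$ while the other equals $1 - 2/M$, and it is precisely this endpoint configuration that the calibration of $\sigma_1$ is designed to handle. I therefore expect verifying the sign of $1 - 2^{\beta_0-1}$ together with the correct deployment of the $\sigma_1$-hypothesis to be the only step requiring genuine care; everything else is routine.
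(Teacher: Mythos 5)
Your proof is correct, and it takes a genuinely different route from the paper. The paper's proof is a soft compactness-and-continuity argument: it observes that $x^{\beta_0}+(1-x)^{\beta_0}>1$ on the open interval $(0,1)$, hence bounded below by $1+\delta$ on $[\sigma_1,1-\sigma_1]$, and then appeals to continuity to absorb the $\sigma_1/2$ shift, never actually invoking the inequality $2-2(1-x)^{\beta_0}-x^{\beta_0}\le 0$ that defines $\sigma_1$. Your argument, by contrast, reduces the whole family of inequalities to the single extremal configuration $(a_1,a_2)=(1/M,2/M)$ via strict concavity of $u\mapsto u^{\beta_0}+(c-u)^{\beta_0}$, and then bounds that endpoint explicitly using the defining property of $\sigma_1$ with $x=2/M$, which yields the closed-form constant $\sigma_2=(1-2^{\beta_0-1})M^{-\beta_0}>0$. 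This buys you a constructive, quantitative $\sigma_2$ (useful if one wants to track constants downstream) and, notably, actually deploys the hypothesis on $\sigma_1$ within the lemma itself rather than leaving it dormant; the paper's version is shorter but relies on an unquantified ``by continuity, with $\sigma_1$ small enough'' step. Both proofs use the same arithmetic-progression grid with step $\le\sigma_1/2$, so the combinatorial skeleton is identical — the divergence is purely in how the lower bound $1+\sigma_2$ is established.
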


Given $\beta_{0}$ in Assumption B2, from now on, we choose
\begin{equation}
c_{0} = b-s - \Lambda^{-1} (\Lambda(b-s)-a_{*}),\quad c_{k} = \Lambda^{-1}
(\Lambda(b-s)-a_{**}),\quad c_{j}=a_{j}(b-s), \label{c}%
\end{equation}
for $j=1,...,k-1$, with $\sigma_{1}$ chosen small enough and $a_{j}%
=a_{j-1}+\sigma_{1}/2$ according to the previous lemma.

We continue with the second step of our program. We concentrate on bounding
the second moment and discuss the case of ($1+\gamma)$-th moment later. The
value of the Lyapunov function at the origin, namely, $g\left(  0\right)  $ in
Lemma \ref{LemLyap} serves as the upper bound of the second moment of the
importance sampling estimator. In order to prove strong efficiency, we aim to
show that there exists a constant $c<\infty$ such that
\[
E^{Q}Z_{b}^{2}\leq cu^{2}(b),
\]
where
\begin{equation}
Z_{b}=I(\tau_{b}<\infty)\prod_{i=1}^{\tau_{b}}r_{S_{i-1}}(S_{i}-S_{i-1})
\label{LR}%
\end{equation}
is the estimator of $u(b)$. Therefore, a useful Lyapunov function for proving
strong efficiency must satisfy that
\[
g(0)\leq cu^{2}(b).
\]
It is natural to consider using an approximation of $u^{2}(b-s)$ as the
candidate. Exactly the same type of fluid heuristic analysis that we used in
(\ref{fluid}) suggests
\begin{equation}
g(s)=\min\{\kappa G^{2}(b-s),1\}, \label{Lyp}%
\end{equation}
where $G$ is the integrated tail defined in (\ref{IntTail}) and $\kappa$ is a
non-negative tuning parameter which will be determined later.

It is important to keep in mind that $g(s)$ certainly depends on $b$. For
notational simplicity, we omit the parameter $b$. The function $g\left(
s\right)  $ will also dictate when we are close enough to the boundary level
$b$ where importance sampling is not required. In particular, using our
notation in (\ref{Sel_q}) and (\ref{MarkovCoM}) we propose choosing $\eta_{\ast}=G^{-1}\left(
\kappa^{-1/2}\right)  $ which amounts to choosing%
\begin{align*}
&  r_{s}\left(  x\right)  ^{-1}\\
&  =\left(  \frac{p_{\ast}I(x\leq c_{0})}{P(X\leq c_{0})}+\frac{p_{\ast\ast
}I(x>c_{k})}{P(X>c_{k})}+\sum_{j=1}^{k-1}\frac{p_{j}I(x\in(c_{j-1},c_{j}%
])}{P(X\in(c_{j-1},c_{j}])}+\frac{f(b-s-x)p_{k}I(x\in(c_{k-1},c_{k}%
])}{f(x)P(X\in(b-s-c_{k},b-s-c_{k-1}])}\right) \\
&  \times I\left(  g\left(  s\right)  <1\right)  +I\left(  g\left(  s\right)
=1\right)  .
\end{align*}

Now we proceed to the last step -- the verification of the Lyapunov
inequality. The Lyapunov inequality in Lemma \ref{LemLyap} is equivalent to
\begin{equation}
\frac{E(r_{s}(X)g(s+X))}{g(s)}\leq1. \label{LypSep}%
\end{equation}
The interesting part of the analysis is the case $g\left(  s\right)  <1$
because whenever $g\left(  s\right)  =1$ the inequality is trivially satisfied
given that $0\leq g\left(  s+X\right)  \leq1$. Hereafter, we will focus on the
case that $g\left(  s\right)  <1$.

The left hand side of (\ref{LypSep}) can be decomposed into the following
pieces,
\begin{align*}
\frac{E(r_{s}(X)g(s+X))}{g(s)}  &  =\frac{P(X\leq b-s-\Lambda^{-1}%
(\Lambda(b-s)-a_{\ast}))}{p_{\ast}}\\
&  \times E\left(  \frac{g(s+X)}{g(s)};X\leq b-s-\Lambda^{-1}(\Lambda
(b-s)-a_{\ast})\right) \\
&  +\frac{P(X>\Lambda^{-1}(\Lambda(b-s)-a_{\ast\ast}))}{p_{\ast\ast}}E\left(
\frac{g(s+X)}{g(s)};X>\Lambda^{-1}(\Lambda(b-s)-a_{\ast\ast})\right) \\
&  +\sum_{i=1}^{k-1}\frac{P(X\in(c_{i-1},c_{i}])}{p_{i}}E\left(  \frac
{g(s+X)}{g(s)};X\in(c_{i-1},c_{i}]\right) \\
&  +\frac{P(b-s-X\in(c_{k-1},c_{k}])}{p_{k}}E\left(  \frac{g(s+X)f(X)}%
{g(s)f(b-s-X)};X\in(c_{k-1},c_{k}]\right)  .
\end{align*}
We adopt the following notation
\begin{align}
J_{\ast}  &  =P(X\leq b-s-\Lambda^{-1}(\Lambda(b-s)-a_{\ast}))E\left(
\frac{g(s+X)}{g(s)};X\leq b-s-\Lambda^{-1}(\Lambda(b-s)-a_{\ast})\right)
\label{J*}\\
J_{\ast\ast}  &  =P(X>\Lambda^{-1}(\Lambda(b-s)-a_{\ast\ast}))E\left(
\frac{g(s+X)}{g(s)};X>\Lambda^{-1}(\Lambda(b-s)-a_{\ast\ast})\right)
\label{J**}\\
J_{i}  &  =P(X\in(c_{i-1},c_{i}])E\left(  \frac{g(s+X)}{g(s)};X\in
(c_{i-1},c_{i}]\right)  \text{, for }i=1,...,k-1\label{Ji}\\
J_{k}  &  =P(b-s-X\in(c_{k-1},c_{k}])E\left(  \frac{g(s+X)f(X)}{g(s)f(b-s-X)}%
;X\in(c_{k-1},c_{k}]\right)  , \label{Jk}%
\end{align}
so that inequality (\ref{LypSep}) is equivalent to showing that%
\[
\frac{J_{\ast}}{p_{\ast}}+\frac{J_{\ast\ast}}{p_{\ast\ast}}+\sum_{i=1}%
^{k-1}\frac{J_{i}}{p_{i}}+\frac{J_{k}}{p_{k}}\leq 1.
\]
We shall study each of these terms separately.

\bigskip

At this point it is useful to provide a summary of all the relevant constants
and parameters introduced so far:

\begin{itemize}
\item $\iota>1$ is the regularly varying index under Assumption A.

\item $b_{0}>0$ is introduced in Assumption B, Lemmas \ref{LemIntTail1} and \ref{LemOvershoot} to ensure regularity properties.


\item $\beta_{0}\in(0,1)$ is introduced in B2 to guarantee that the distribution
considered is ``heavier'' than a Weibull distribution with shape parameter
$\beta_{0}$

\item $a_{\ast}$, $a_{\ast\ast}>0$ are introduced to define the mixture components
corresponding to a ``regular jump" and a ``large jump" respectively.

\item $a_{1}<...<a_{k-1}$ are defined according to Lemma \ref{LemWeibull}.

\item $c_{j}$ for $j=0,1,...,k$ are defined in \eqref{c} and correspond to the
end points of the support of the interpolating mixture components.

\item $\kappa,$ $\eta_{\ast}$ are parameters for the Lyapunov function. They
are basically equivalent since $\eta_{\ast}=G^{-1}\left(  \kappa
^{-1/2}\right)  $, $\kappa$ appears in the definition of the Lyapunov
function. It is important to keep in mind that by letting $\kappa$ be large,
the condition $g\left(  s\right)  <1$ implies that $b-s>\eta_{\ast}$ is large.

\item $\varepsilon_{0},\delta_{0}$ are arbitrarily small constants introduced
in Lemma \ref{LemIntTail1}.


\item The parameters $p_{\ast}$, $p_{\ast\ast}$ and $p_{i}$ for $i=1,...,k$ are the mixture probabilities and
will depend on the current state $s$.
\end{itemize}

Other critical constants which will be introduced in the sequel concerning the
analysis of $J_{\ast}$, $J_{\ast\ast}$, and $J_{i}$, $i=1,...,k$ are:

\begin{itemize}
\item $\delta_{0}^{\ast}>0$ is a small parameter which appears in the analysis
of $J_{\ast}$. It will be introduced in Proposition \ref{PropJReg}.

\item $\delta_{1}^{\ast}>0$, a small parameter, appears in the definition of
$p_{i}$ and the overall contribution of the $J_{i}$'s. It will be introduced
in step III) of the parameter selection process.

\item $\delta_{2}^{\ast}>0$ is introduced to control the termination time of
the algorithm. It ultimately provides a link between $a_{\ast\ast}>0$ and
$\delta_{0}^{\ast}>0$ in Section \ref{SecTermination}.


\item Parameters $\theta$, $\tilde{\varepsilon}$ and $\tilde{\varepsilon}_{1}$
which are introduced to specify the probabilities $p_{\ast\ast}$ and the
$p_{i}$'s respectively. Their specific values depending on $\delta_{0}^{\ast}$
and $\delta_{1}^{\ast}$ will be indicated in steps I) to IV) below.
\end{itemize}

\bigskip

Throughout the rest of the paper we shall use $\varepsilon,\delta>0$ to denote
arbitrarily small positive constants whose values might even change from line
to line. Similarly, $K,c\in\left(  0,\infty\right)  $ are used to denote
positive constants that will be employed as generic upper bounds.

\bigskip

Now, we study the terms $J_{\ast}$, $J_{\ast\ast}$, and $J_{i}$, $i=1,...,k$.

\paragraph{The term}

$J_{\ast\ast}$:
\begin{align}
J_{\ast\ast}  &  =P(X>\Lambda^{-1}(\Lambda(b-s)-a_{\ast\ast}))E\left(
\frac{g(s+X)}{g(s)};X>\Lambda^{-1}(\Lambda(b-s)-a_{\ast\ast})\right)
\nonumber\\
&  \leq\frac{P^{2}(X>\Lambda^{-1}(\Lambda(b-s)-a_{\ast\ast}))}{g(s)}%
=e^{2a_{\ast\ast}}\frac{\bar{F}^{2}(b-s)}{g(s)} \label{JTail}%
\end{align}

\paragraph{A bound for}

$J_{\ast}$:

\begin{proposition}
\label{PropJReg} Suppose the distribution function $F$ satisfies Assumption A
or Assumptions B1-3. Then, as $b-s\rightarrow\infty$,
\[
E\left(  \frac{g(s+X)}{g(s)};X\leq b-s-\Lambda^{-1}(\Lambda(b-s)-a_{\ast
})\right)  \leq1+(1+o(1))\mu\frac{\partial g(s)}{g\left(  s\right)  }.
\]
Therefore, for any $\delta^*_0>0$, we can select $\eta_{*}>0$ such that for
all $b-s>\eta_{*}$,
\[
E\left(  \frac{g(s+X)}{g(s)};X\leq b-s-\Lambda^{-1}(\Lambda(b-s)-a_{\ast
})\right)  \leq1+\mu(1-\delta^{\ast}_{0})\frac{\partial g(s)}{g\left(
s\right)  }.
\]

\end{proposition}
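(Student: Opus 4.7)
The plan is to Taylor-expand $g(s+X)/g(s)$ to second order about $X=0$, identify the first-order contribution as $\mu\,\partial g(s)/g(s)$, and show every remaining term is $o(\partial g(s)/g(s))$ uniformly in $s$ as $b-s\to\infty$. The quantitative bound in the second assertion will then follow by choosing $\eta_{\ast}$ large enough that the $o(\partial g(s)/g(s))$ error is dominated by $|\mu|\delta_{0}^{\ast}\,\partial g(s)/g(s)$.

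First I would verify that on the integration domain $\{X\leq c_{0}\}$, where $c_{0}=b-s-\Lambda^{-1}(\Lambda(b-s)-a_{\ast})$, one has $s+X\leq b-\Lambda^{-1}(\Lambda(b-s)-a_{\ast})<b-\eta_{\ast}$ for $b-s$ sufficiently large (by Lemma~\ref{LemOvershoot} under Assumption~B, or by \eqref{Reg} under Assumption~A). Hence $g(s+X)=\kappa G^{2}(b-s-X)$ throughout, and setting $\phi(x)=G(b-s-x)/G(b-s)$ one obtains $g(s+X)/g(s)=\phi(X)^{2}$ with $\phi(0)=1$, $\phi'(0)=\bar{F}(b-s)/G(b-s)=\tfrac12\,\partial g(s)/g(s)$, and $\phi''(x)=f(b-s-x)/G(b-s)\geq 0$, so $\phi$ is convex. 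Taylor's theorem with integral remainder gives $\phi(X)=1+\phi'(0)X+r(X)$ with $r(X)=\int_{0}^{X}(X-t)\phi''(t)\,dt\geq 0$, whence
$$\phi(X)^{2}=1+2\phi'(0)X+2r(X)+(\phi'(0)X+r(X))^{2}.$$

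Integrating on $\{X\leq c_{0}\}$, the linear portion contributes $1-\bar{F}(c_{0})+2\phi'(0)(\mu-E[X;X>c_{0}])$. Both correction pieces are $o(\phi'(0))$: under Assumption~A, \eqref{Reg} gives $\bar{F}(c_{0})=e^{a_{\ast\ast}}\bar{F}(b-s)=\phi'(0)\cdot e^{a_{\ast\ast}}G(b-s)$, which is $o(\phi'(0))$ because $G(b-s)\to 0$ by Lemma~\ref{LemIntTail1}; the estimate $E[X;X>c_{0}]\leq c_{0}\bar{F}(c_{0})+G(c_{0})$ combined with the same lemma yields $\phi'(0)E[X;X>c_{0}]=o(\phi'(0))$; under Assumption~B the faster tail decay gives the same conclusions a fortiori. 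The linear part therefore contributes $1+2\phi'(0)\mu+o(\phi'(0))=1+(1+o(1))\mu\,\partial g(s)/g(s)$.

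The main obstacle is controlling the quadratic remainder $E[2r(X)+(\phi'(0)X+r(X))^{2};\,X\leq c_{0}]$, because under Assumption~A with $\iota\in(1,2)$ the variance of $X$ is infinite. My plan is to split into moderate and extreme ranges via a truncation $|X|\leq M(b-s)$ with $M\to\infty$, $M/(b-s)\to 0$, and $\bar{F}(M)=o(\phi'(0))$, all simultaneously achievable because $\phi'(0)\sim(\iota-1)/(b-s)$ and $\bar{F}$ is regularly varying (and decays even faster under Assumption~B). On the moderate range, monotonicity of $f$ in the tail yields $\sup_{|t|\leq M}\phi''(t)=O(f(b-s)/G(b-s))$, so $r(X)\leq\tfrac12 X^{2}\cdot O(f(b-s)/G(b-s))$ and its contribution is $o(\phi'(0))$ because $\lambda(b-s)=f(b-s)/\bar{F}(b-s)\to 0$; the piece $\phi'(0)^{2}E[X^{2};X\leq c_{0}]$ is $O(\phi'(0)^{2}c_{0}^{2-\iota})=O((b-s)^{-\iota})$ under Assumption~A (and $O(\phi'(0)^{2})$ under Assumption~B since B1 renders all polynomial moments finite), both of which are $o(\phi'(0))$ for $\iota>1$. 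On the far tails, Lemma~\ref{LemIntTail} bounds $\phi(X)\leq\kappa_{1}$ for $0<X\leq c_{0}$ while $\phi(X)\leq 1$ for $X\leq 0$, so both $\{X>M\}$ and $\{X<-M\}$ contribute $O(P(|X|>M))=o(\phi'(0))$. Summing all pieces yields the claimed asymptotic inequality, and the quantitative form with factor $1-\delta_{0}^{\ast}$ follows by choosing $\eta_{\ast}$ correspondingly large.
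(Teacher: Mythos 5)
Your decomposition is a genuinely different route from the paper. The paper expands $g(s+X)/g(s)=1+X\,\partial g(s+\xi)/g(s)$ by the first-order Taylor theorem with Lagrange remainder, rewrites $X\,\partial g(s+\xi)/g(s)$ as $2X\cdot\tfrac{\bar F(b-s-\xi)}{\bar F(b-s)}\cdot\tfrac{G(b-s-\xi)}{G(b-s)}\cdot\tfrac{\bar F(b-s)}{G(b-s)}$, bounds the two ratios by $e^{a_\ast}$ and $\kappa_1$ on $\{X\le c_0\}$, and passes to the limit by dominated convergence with dominating function $2|X|e^{a_\ast}\kappa_1$. Nothing beyond continuity of $\bar F$ is used. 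You instead work with $\phi(x)=G(b-s-x)/G(b-s)$, write $g(s+X)/g(s)=\phi(X)^2$, and expand $\phi$ to \emph{second} order with an integral remainder $r(X)=\int_0^X(X-t)\phi''(t)\,dt$ before squaring. This is more explicit and gives a rate, but it forces you to control $\phi''(t)=f(b-s-t)/G(b-s)$, i.e.\ the density $f$ itself, which the paper's one-derivative argument never touches.

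This is where the gap is. To bound $E[r(X);|X|\le M]$ you invoke eventual monotonicity of $f$ to get $\sup_{|t|\le M}\phi''(t)=O(f(b-s)/G(b-s))$, and you use $\lambda(b-s)=f(b-s)/\bar F(b-s)\to 0$. Under Assumption~B these are available: B3 makes $\lambda$ non-increasing (hence $f$ eventually non-increasing), and Lemma~\ref{LemRateFun} gives $\lambda(x)=O(x^{\beta_0-1})\to 0$. Under Assumption~A, however, neither fact is assumed nor implied: regular variation of $\bar F$ places no constraint on the pointwise behaviour of $f$ or $\lambda$; one can build $\bar F$ regularly varying with $\lambda$ unbounded and $f$ non-monotone in every neighbourhood of infinity, in which case both $\sup_{|t|\le M}\phi''(t)=O(f(b-s)/G(b-s))$ and $\lambda(b-s)\to 0$ fail. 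The paper's mean-value/DCT argument is deliberately chosen so that only $\bar F$ and $G$ appear, and so remains valid under Assumption~A. Your argument can be repaired in the A-case by bounding $r(X)=\int_0^X\bigl(\phi'(t)-\phi'(0)\bigr)\,dt$ directly through $\bar F$ (using the uniform convergence theorem for regularly varying functions to control $\bar F(b-s-t)/\bar F(b-s)-1$ on $|t|\le M$), avoiding any reference to $f$; but as written the step is not justified under Assumption~A. A smaller, non-fatal imprecision: on $\{X<-M\}$ the quantity $R(X)=\phi(X)^2-1-2\phi'(0)X$ is not $O(P(|X|>M))$ because of the term $-2\phi'(0)X$; it is bounded above by $2\phi'(0)E[|X|;X<-M]=o(\phi'(0))$, which suffices for the one-sided inequality you need, but the stated bound is not what actually closes the estimate.
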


\begin{proof}
[Proof of Proposition \ref{PropJReg}]By Taylor's expansion,
\[
\frac{g(s+X)}{g(s)}=1+X\frac{\partial g(s+\xi)}{g(s)},
\]
where $\xi\in(0,X)$ (or $(X,0)$). For all $s$ and $X$ such that $g(s)<1$ and
$g(s+X)<1$,
\[
X\partial g(s+\xi)/g(s)=2X\bar{F}(b-s-\xi)G(b-s-\xi)/G^{2}(b-s)=2X\frac
{\bar{F}(b-s-\xi)}{\bar{F}(b-s)}\frac{G(b-s-\xi)}{G(b-s)}\frac{\bar{F}%
(b-s)}{G(b-s)}.
\]
Then,
\begin{align*}
&  \frac{G(b-s)}{\bar{F}(b-s)}E\left(  X\partial g(s+\xi)/g(s);X\leq
b-s-\Lambda^{-1}(\Lambda(b-s)-a_{\ast})\right) \\
&  \leq2E\left(  X\frac{\bar{F}(b-s-\xi)}{\bar{F}(b-s)}\frac{G(b-s-\xi
)}{G(b-s)};X\leq b-s-\Lambda^{-1}(\Lambda(b-s)-a_{\ast})\right)
\end{align*}
Note the following facts,
\[
\frac{\bar{F}(b-s-\xi)}{\bar{F}(b-s)}\leq e^{a_{\ast}},
\]
and by Lemma \ref{LemIntTail} (Assumption B) or the regularly variation property of $G$ (Assumption A),
\[
\frac{G(b-s-\xi)}{G(b-s)}\leq\kappa_{1},
\]
and by Lemma \ref{LemIntTail1} and the fact that $F$ is subexponential (Lemma
\ref{LemSubExp}),
\[
X\frac{\bar{F}(b-s-\xi)}{\bar{F}(b-s)}\frac{G(b-s-\xi)}{G(b-s)}\rightarrow X,
\]
as $b-s\rightarrow\infty$.
By the dominated convergence theorem,
\begin{equation}
\lim_{b-s\rightarrow\infty}\frac{G(b-s)}{\bar{F}(b-s)}E\left(
X\partial g(s+\xi)/g(s);X\leq b-s-\Lambda^{-1}(\Lambda(b-s)-a_{\ast})\right)
=2\mu. \label{DCGT}%
\end{equation}
Therefore, we can always choose the constants appropriately such that the
conclusion of the proposition holds.
\end{proof}

\bigskip

As remarked in equation (\ref{k0}), the terms $J_{i},\ i=1,...,k$, do not
appear in the context of Assumption A. We consider them in the context of
Assumption B.

\paragraph{Bound for $J_{i}$, $2\leq i\leq k-1$:}

\begin{proposition}
\label{PropMid} Suppose that Assumptions B1-3 hold. Then, for each $2\leq
i\leq k-1$, we have that for any $\alpha>0$
\[
J_{i}=\int_{c_{i-1}}^{c_{i}}\frac{f(x)g(s+x)}{f_{j}(x)g\left(  s\right)
}f(x)dx=o(\left(  b-s\right)  ^{-\alpha}),
\]
as $b-s\rightarrow\infty$.
\end{proposition}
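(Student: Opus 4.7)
The plan is to bound $J_i$ by something that decays super-polynomially in $b-s$, by directly estimating the integrand using the explicit form $g(s)=\kappa G^{2}(b-s)$ (which is the only nontrivial regime, since $g(s)=1$ trivially implies $r_s\equiv 1$), combined with the chain of lemmas \ref{LemIntTail1}, \ref{LemCen} and \ref{LemWeibull} already available. Since $g(s+x)\le\kappa G^{2}(b-s-x)$ uniformly (the truncation at $1$ is harmless) and $g(s)=\kappa G^{2}(b-s)$, the pointwise ratio satisfies $g(s+x)/g(s)\le G^{2}(b-s-x)/G^{2}(b-s)$. Monotonicity of $G$ gives
\[
\sup_{x\in(c_{i-1},c_i]}\frac{g(s+x)}{g(s)}\le \frac{G^{2}((1-a_i)(b-s))}{G^{2}(b-s)}.
\]
Pulling the supremum out of the expectation and using $P(X\in(c_{i-1},c_i])\le \bar F(c_{i-1})=\bar F(a_{i-1}(b-s))$ yields $J_i\le P^{2}(X\in(c_{i-1},c_i])\cdot G^{2}((1-a_i)(b-s))/G^{2}(b-s)$.

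Next I apply Lemma \ref{LemIntTail1}: the upper bound $G(t)\le \varepsilon_0 t\bar F(t)$ to the numerator and the lower bound $G(t)\ge \varepsilon_0^{-1}\bar F(t)$ to the denominator. Writing $\bar F=e^{-\Lambda}$ throughout, this produces
\[
J_i\le C(b-s)^{2}\exp\Big\{-2\Lambda(a_{i-1}(b-s))-2\Lambda((1-a_i)(b-s))+2\Lambda(b-s)\Big\}
\]
for some constant $C$ depending on $a_i$, $a_{i-1}$ and $\varepsilon_0$. I then invoke Lemma \ref{LemCen} twice (with $x=a_{i-1}(b-s)$ and $x=(1-a_i)(b-s)$, both eventually exceeding $b_0$) to obtain $\Lambda(a_{i-1}(b-s))\ge a_{i-1}^{\beta_0}\Lambda(b-s)$ and $\Lambda((1-a_i)(b-s))\ge (1-a_i)^{\beta_0}\Lambda(b-s)$, collapsing the exponent to $-2\big[a_{i-1}^{\beta_0}+(1-a_i)^{\beta_0}-1\big]\Lambda(b-s)$.

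Because $i\in\{2,\dots,k-1\}$, the pair $(i-1,i)$ is precisely of the form $(j,j+1)$ with $j\in\{1,\dots,k-2\}$, so Lemma \ref{LemWeibull} applies and gives $a_{i-1}^{\beta_0}+(1-a_i)^{\beta_0}\ge 1+\sigma_2$. Therefore
\[
J_i\le C(b-s)^{2}e^{-2\sigma_2\Lambda(b-s)}.
\]
Finally, Assumption B1 asserts $x\lambda(x)\to\infty$, hence $\Lambda(x)/\log x\to\infty$, so $e^{-2\sigma_2\Lambda(b-s)}$ decays faster than any polynomial in $b-s$ and easily absorbs the $(b-s)^{2}$ prefactor. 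This yields $J_i=o((b-s)^{-\alpha})$ for every $\alpha>0$.

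The main obstacle is really just careful bookkeeping: lining up the indices so that $(i-1,i)$ matches the pair $(j,j+1)$ in Lemma \ref{LemWeibull} (which is what forces the strict gain $\sigma_2$ and therefore the super-polynomial decay); applying the upper and lower bounds of Lemma \ref{LemIntTail1} in the correct directions so that the ratio $G^{2}(\cdot)/G^{2}(b-s)$ only loses a harmless polynomial factor; and verifying that Assumption B1 does promote $\Lambda$ above every logarithm. Once these pieces are arranged, no further tools are required.
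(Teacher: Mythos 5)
Your proposal is correct and mirrors the paper's argument step for step: pull out $P(X\in(c_{i-1},c_i])$, bound $g(s+x)/g(s)\le G^{2}((1-a_i)(b-s))/G^{2}(b-s)$, use Lemma~\ref{LemIntTail1} in both directions to trade $G$-ratios for $\bar F$-ratios at the cost of a $(b-s)^{2}$ prefactor, collapse the exponent via Lemma~\ref{LemCen}, and finish with Lemma~\ref{LemWeibull} and Assumption~B1, which makes $\Lambda(b-s)$ outgrow every multiple of $\log(b-s)$ so that $e^{-2\sigma_2\Lambda(b-s)}$ dominates any polynomial. The only cosmetic difference is that you take the supremum of the ratio over the interval while the paper bounds $\int G^{2}(b-s-x)f(x)\,dx$ by $G^{2}(b-s-c_i)\bar F(c_{i-1})$ directly; both yield the identical exponential estimate.
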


\begin{proof}
Thanks to Lemma \ref{LemCen}, for each $x,y,z$ sufficiently large, we have
\begin{equation}
\Lambda(x)+\Lambda(y)-\Lambda(x+y+z)\geq\Lambda(x+y+z)\left(  \left(  \frac
{x}{x+y+z}\right)  ^{\beta_{0}}+\left(  \frac{y}{x+y+z}\right)  ^{\beta_{0}%
}-1\right)  . \label{IneqHaz}%
\end{equation}
We first note that by repeatedly using results in Lemma \ref{LemIntTail1}
\begin{align*}
&  \int_{c_{j-1}}^{c_{j}}\frac{f(x)g(s+x)}{\kappa f_{j}(x)G^{2}(b-s)}f(x)dx\\
&  =\frac{P(X\in(c_{j-1},c_{j}])}{G^{2}(b-s)}\int_{c_{j-1}}^{c_{j}}%
G^{2}(b-s-x)f(x)dx\\
&  \leq\frac{\varepsilon_{0}e^{\Lambda(b-s)-\Lambda(c_{j-1})}}{ G(b-s)}%
\int_{c_{j-1}}^{c_{j}}G^{2}(b-s-x)\lambda(x)e^{-\Lambda(x)}dx\\
&  \leq\frac{\varepsilon_{0}e^{\Lambda(b-s)-\Lambda(c_{j-1})}}{G(b-s)}\bar
{F}(c_{j-1})G^{2}(b-s-c_{j})\\
&  \leq\varepsilon_{0}^{4}\frac{e^{\Lambda(b-s)-\Lambda(c_{j-1})}}{\bar
{F}(b-s)}\bar{F}(c_{j-1})(b-s)^{2}\bar{F}^{2}(b-s-c_{j})\\
&  =\varepsilon_{0}^{4}(b-s)^{2}e^{2\Lambda(b-s)-2\Lambda(c_{j-1}%
)-2\Lambda(b-s-c_{j})}\\
&  \leq\varepsilon_{0}^{4}(b-s)^{2}\exp\left\{  -2\Lambda(b-s)\left(
a_{j-1}^{\beta_{0}}+(1-a_{j})^{\beta_{0}}-1\right)  \right\}  =o(1) \left(
b-s\right)  ^{-\alpha},
\end{align*}
as $b-s\rightarrow\infty$ for each $\alpha>0$. The last inequality is thanks to
(\ref{c}), (\ref{IneqHaz}). The last step (equality) follows from Lemma
\ref{LemWeibull} and Assumption B1 which implies that the tail of $X$
decreases faster than any polynomial.
\end{proof}

\paragraph{A bound for $J_{1}$:}

\begin{proposition}
\label{PropFirst} Suppose that Assumptions B1-3 hold. Then, for each
$\alpha>0$ we have%
\[
J_{1}=\int_{b-s-\Lambda^{-1}(\Lambda(b-s)-a_{\ast})}^{c_{1}}\frac
{f(x)g(s+x)}{f_{1}(x)g\left(  s\right)  }f(x)dx=o(\left(  b-s\right)
^{-\alpha}),
\]
as $b-s\rightarrow\infty$.
\end{proposition}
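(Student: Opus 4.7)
The plan is to mirror the strategy of Proposition \ref{PropMid}, writing $J_1$ as $P(X\in(c_0,c_1])/G^2(b-s)$ times an integral of $G^2(b-s-x)f(x)$ and then bounding that integral. The key difference is that the endpoint bound $G^2(b-s-x)\leq G^2(b-s-c_j)$ used in Proposition \ref{PropMid} is far too loose here because the left endpoint $c_0$ is only of order $(b-s)^{(1-\beta_0)/2}$ by Lemma \ref{LemOvershoot}---sublinear in $b-s$---so I would instead obtain a \emph{pointwise} bound on the integrand. Noting that for $x\leq c_1=a_1(b-s)$ with $b-s$ large we have $b-s-x\geq(1-a_1)(b-s)>\eta_*$ so that $g(s+x)=\kappa G^2(b-s-x)$, and using $f(x)/f_1(x)=P(X\in(c_0,c_1])$ on the support of $f_1$, I get
\[
J_1 \;=\; \frac{P(X\in(c_0,c_1])}{G^2(b-s)}\int_{c_0}^{c_1} G^2(b-s-x)\,f(x)\,dx.
\]

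Next, I would use Lemma \ref{LemIntTail1} in both directions, $\varepsilon_0^{-1}\bar F(y)\leq G(y)\leq\varepsilon_0 y\bar F(y)$, together with $f(x)=\lambda(x)e^{-\Lambda(x)}$, to obtain
\[
\frac{G^2(b-s-x)\,f(x)}{G^2(b-s)} \;\leq\; \varepsilon_0^4(b-s)^2\,\lambda(x)\,\exp\!\bigl\{2\Lambda(b-s)-2\Lambda(b-s-x)-\Lambda(x)\bigr\},
\]
and then apply Lemma \ref{LemCen} twice (once with pair $(x,b-s-x)$ and once with pair $(b-s-x,x)$, both summing to $b-s$) to bound the exponent from above by $\Lambda(b-s)\bigl[2-u^{\beta_0}-2(1-u)^{\beta_0}\bigr]$, where $u=x/(b-s)$. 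Since $a_1\leq\sigma_1$ by Lemma \ref{LemWeibull}, $u$ lies in $(0,\sigma_1]$, and on this interval the bracket is non-positive precisely by the defining inequality of $\sigma_1$ in Lemma \ref{LemWeibull}. Hence the exponential factor is at most $1$, and integration gives
\[
\int_{c_0}^{c_1}\frac{G^2(b-s-x)\,f(x)}{G^2(b-s)}\,dx \;\leq\; \varepsilon_0^4(b-s)^2\bigl(\Lambda(c_1)-\Lambda(c_0)\bigr) \;=\; O\bigl((b-s)^{2+\beta_0}\bigr)
\]
by Lemma \ref{LemRateFun}.

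To finish, $P(X\in(c_0,c_1])\leq\bar F(c_0)$, and Assumption B1 ($x\lambda(x)\to\infty$) together with Lemma \ref{LemOvershoot} ($c_0\geq(b-s)^{(1-\beta_0)/2}$) yield $\Lambda(c_0)\gg\log(b-s)$, so $\bar F(c_0)=o((b-s)^{-L})$ for every $L$, which easily absorbs the polynomial prefactor and gives $J_1=o((b-s)^{-\alpha})$ for every $\alpha>0$. The main obstacle is exactly the pointwise bound in the second paragraph: the direct endpoint bound inherited from Proposition \ref{PropMid} would produce a factor of $\exp(\beta_0 a_1\Lambda(b-s))$ that $\bar F(c_0)^2$ cannot tame (since $\Lambda(c_0)\lesssim\Lambda(b-s)(b-s)^{-\beta_0(1+\beta_0)/2}\ll a_1\Lambda(b-s)$), so it is essential to carry both $\bar F(x)$ and $\bar F(b-s-x)$ through the integrand. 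What is pleasant is that Lemma \ref{LemWeibull}, originally introduced for the intermediate mixtures $J_2,\ldots,J_{k-1}$, provides exactly the right inequality $2-u^{\beta_0}-2(1-u)^{\beta_0}\leq 0$ to kill the exponential factor uniformly across $(c_0,c_1]$.
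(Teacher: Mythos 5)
Your proof is correct and follows essentially the same route as the paper: the paper also keeps both $\bar F(b-s-x)$ and $\bar F(x)$ in the integrand, applies Lemma~\ref{LemCen} (via~\eqref{ImpIneq}) to bound the exponent above by $\Lambda(b-s)\bigl[2-2(1-u)^{\beta_0}-u^{\beta_0}\bigr]\leq 0$ for $u\leq\sigma_1$, and then lets $\bar F(c_0)$ (which by Lemma~\ref{LemOvershoot} and Assumption B1 decays faster than any polynomial) absorb the polynomial prefactor. Your closing remark explaining why the endpoint bound used in Proposition~\ref{PropMid} would fail on $(c_0,c_1]$ is a correct observation that the paper leaves implicit.
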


\begin{proof}
[Proof of Proposition \ref{PropFirst}]Use Lemma \ref{LemIntTail1} and
$\lim_{x\rightarrow\infty}\lambda(x)=0$ and obtain
\begin{align*}
&  \int_{b-s-\Lambda^{-1}(\Lambda(b-s)-a_{\ast})}^{c_{1}}\frac{f(x)g(s+x)}%
{\kappa f_{1}(x)G^{2}(b-s)}f(x)dx\\
&  \leq\frac{P(X>b-s-\Lambda^{-1}(\Lambda(b-s)-a_{\ast}))}{G^{2}(b-s)}%
\int_{b-s-\Lambda^{-1}(\Lambda(b-s)-a_{\ast})}^{c_{1}}G^{2}(b-s-x)f(x)dx\\
&  \leq\varepsilon_{0}^{4}(b-s)^{2}P(X>b-s-\Lambda^{-1}(\Lambda(b-s)-a_{\ast
}))\int_{b-s-\Lambda^{-1}(\Lambda(b-s)-a_{\ast})}^{c_{1}}e^{2\Lambda
(b-s)-2\Lambda(b-s-x)-\Lambda(x)}dx.
\end{align*}
Also note that by Lemma \ref{LemCen},
\begin{equation}
\Lambda(x)+\Lambda(b-s-x)-\Lambda(b-s)\geq\Lambda(b-s)\left(  \left(  \frac
{x}{b-s}\right)  ^{\beta_{0}}+\left(  \frac{b-s-x}{b-s}\right)  ^{\beta_{0}%
}-1\right)  , \label{ImpIneq}%
\end{equation}
and,
\[
\Lambda(b-s)-\Lambda(b-s-x)\leq\Lambda(b-s)(1-(1-x/(b-s))^{\beta_{0}}).
\]
Therefore, for all $x\in\lbrack b-s-\Lambda^{-1}(\Lambda(b-s)-a_{\ast}%
),\sigma_{1}(b-s)]$, with $\sigma_{1}$ selected according to Lemma \ref{LemWeibull},
\[
2\Lambda(b-s)-2\Lambda(b-s-x)-\Lambda(x)\leq\Lambda(b-s)\left(  2-2\left(
1-\frac{x}{b-s}\right)  ^{\beta_{0}}-\frac{x^{\beta_{0}}}{(b-s)^{\beta_{0}}%
}\right)  \leq0.
\]
Together with Lemma \ref{LemOvershoot}, $P(X>b-s-\Lambda^{-1}(\Lambda
(b-s)-a_{\ast}))$ decreases to zero faster than any polynomial rate. The
conclusion of the lemma follows.
\end{proof}

\paragraph{A bound for $J_{k}$:}

\begin{proposition}
\label{PropLast}If Assumption B holds then for each $\alpha>0$%
\[
J_{k}=\int_{c_{k-1}}^{c_{k}}\frac{f(x)g(s+x)}{f_{k}(x)g\left(  s\right)
}f(x)dx=o(\left(  b-s\right)  ^{-\alpha}),
\]
as $b-s\rightarrow\infty$.
\end{proposition}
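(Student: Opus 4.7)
The plan is to mirror the proof of Proposition~\ref{PropFirst}, first unwinding the reflection built into $f_k$ via the substitution $y = b-s-x$ so that $y$ represents the remaining distance to the barrier $b-(s+x)$. This yields
\[
J_k \;=\; P\bigl(X \in (b-s-c_k,\,b-s-c_{k-1}]\bigr) \int_{b-s-c_k}^{b-s-c_{k-1}} \frac{f^{2}(b-s-y)\,g(b-y)}{f(y)\,g(s)}\,dy.
\]
By Lemma~\ref{LemOvershoot}, $b-s-c_k \geq (b-s)^{(1-\beta_0)/2}$; hence for $b-s$ large this exceeds $\eta_{\ast}$, so both $g(b-y) = \kappa G^{2}(y)$ and $g(s) = \kappa G^{2}(b-s)$ hold (no clipping) throughout the integration range.

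Next, I would bound the integrand pointwise. The Lemma~\ref{LemIntTail1} bounds $G(y) \leq \varepsilon_0 y \bar{F}(y)$ and $G(b-s) \geq \varepsilon_0^{-1}\bar{F}(b-s)$, together with $f = \lambda e^{-\Lambda}$, reduce the integrand to
\[
\varepsilon_{0}^{4}\,y^{2}\,\frac{\lambda^{2}(b-s-y)}{\lambda(y)}\,\exp\bigl\{\,2\Lambda(b-s) - 2\Lambda(b-s-y) - \Lambda(y)\,\bigr\}.
\]
Lemma~\ref{LemCen}, used as in inequality~(\ref{ImpIneq}), bounds the exponent by $\Lambda(b-s)\bigl[\,2 - 2\bigl(\tfrac{b-s-y}{b-s}\bigr)^{\beta_0} - \bigl(\tfrac{y}{b-s}\bigr)^{\beta_0}\,\bigr]$. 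Setting $u = y/(b-s)$, the range of $u$ is $(0,\,1-a_{k-1}] \subseteq (0,\sigma_{1}]$ by Lemma~\ref{LemWeibull} (which forces $a_{k-1}\geq 1-\sigma_{1}$); the defining property of $\sigma_{1}$ then yields $2 - 2(1-u)^{\beta_0} - u^{\beta_0} \leq 0$, so the exponential factor is $\leq 1$. The remaining pre-exponential pieces are polynomially bounded in $b-s$: $y \leq b-s$; using Lemma~\ref{LemRateFun}, Assumption B3 (non-increasing $\lambda$), and $b-s-y \geq y$ on our range (since $\sigma_{1}<1/2$), together with the lower bound $\lambda(y)\geq M/y$ supplied by Assumption B1, one obtains $\lambda^{2}(b-s-y)/\lambda(y) = O((b-s)^{2\beta_{0}-1})$. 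Integrating over the length-$O(b-s)$ interval gives an integral that is at most a polynomial in $b-s$.

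Finally, the prefactor $P(X \in (b-s-c_{k},\,b-s-c_{k-1}]) \leq \bar{F}(b-s-c_{k})$. Since $b-s-c_{k} \geq (b-s)^{(1-\beta_{0})/2} \to \infty$ and Assumption B1 forces $\Lambda(t)/\log t \to \infty$, this prefactor decays faster than any polynomial rate in $b-s$. Multiplying its super-polynomial decay against the polynomial bound on the integral yields $J_{k} = o((b-s)^{-\alpha})$ for every $\alpha > 0$.

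The main obstacle I expect is controlling the exponent: we need $2 - 2(1-u)^{\beta_{0}} - u^{\beta_{0}} \leq 0$ uniformly over the \emph{entire} last interval $u \in (0,1-a_{k-1}]$, because any positive slack would be amplified by $\Lambda(b-s) \to \infty$ and destroy the estimate. This is precisely why Lemma~\ref{LemWeibull} pins $a_{k-1} \geq 1-\sigma_{1}$ up front: it forces $1 - a_{k-1} \leq \sigma_{1}$, exactly the regime where $2 - 2(1-u)^{\beta_0} - u^{\beta_0} \leq 0$ holds by the very construction of $\sigma_{1}$.
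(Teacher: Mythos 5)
Your proposal is correct and follows essentially the same approach as the paper: bound $g$ by $\kappa G^{2}$, convert $G$ to $\bar F$ via Lemma~\ref{LemIntTail1}, observe that the resulting exponent $2\Lambda(b-s)-2\Lambda(b-s-y)-\Lambda(y)$ is nonpositive on the interval by Lemma~\ref{LemCen} together with the choice of $\sigma_1$ in Lemma~\ref{LemWeibull}, bound the polynomial pre-exponential factors using B1/B3 and Lemma~\ref{LemRateFun}, and absorb everything into the super-polynomial decay of $\bar F(b-s-c_k)$ via the Lemma~\ref{LemOvershoot} lower bound on $b-s-c_k$. The only difference from the paper's proof is the cosmetic substitution $y=b-s-x$ (which gives the slightly tighter pre-factor $y^{2}$ in place of the paper's $(b-s)^{2}$), and your final appeal to $\Lambda(t)/\log t\to\infty$ makes explicit what the paper leaves implicit.
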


\begin{proof}
[Proof of Proposition \ref{PropLast}]Note that
\begin{align*}
&  \int_{c_{k-1}}^{c_{k}}\frac{g(s+x)}{\kappa G^{2}(b-s)}\frac{f^{2}(x)}%
{f_{k}(x)}dx\\
&  =P(X\in(b-s-c_{k},b-s-c_{k-1}])\int_{c_{k-1}}^{c_{k}}\frac{g(s+x)}{\kappa
G^{2}(b-s)}\frac{f^{2}(x)}{f(b-s-x)}dx\\
&  \leq\varepsilon_{0}^{4}\bar{F}(b-s-c_{k})\int_{c_{k-1}}^{c_{k}}%
\frac{(b-s)^{2}\lambda^{2}(x)}{\lambda(b-s-x)}e^{2\Lambda(b-s)-2\Lambda
(x)-\Lambda(b-s-x)}dx.
\end{align*}
We note that $\sigma_{1}$ is small enough and $x>(1-\sigma_{1})(b-s)$ so that
we can apply Lemma \ref{LemWeibull} to conclude%
\[
2\Lambda(b-s)-2\Lambda(x)-\Lambda(b-s-x)\leq \Lambda (b-s) \left (2-2\left(  \frac{x}{b-s}\right)
^{\beta_{0}}-\left(  \frac{b-s-x}{b-s}\right)  ^{\beta_{0}} \right )\leq0.
\]
By Assumption B1,
$1/\lambda(x)$ grows at most linearly in $x $ and also we have (just as in Lemma \ref{LemOvershoot}) that $\bar{F}%
(b-s-c_{k})\leq \bar F((b-s)^{\frac{1-\beta_0}{2}})$ decays faster than any polynomial rate. We then have the conclusion of
the proposition.
\end{proof}

\paragraph{Summary of estimates and implications for the design of the
change of measure selection.}

The previous bounds on $J_{\ast}$, $J_{\ast\ast}$, and $J_{i}$, $i=1,...,k$
imply that we can choose parameters and setup the algorithm as follows.

\begin{itemize}
\item[\textbf{I}] If Assumption A holds, we choose $a_*$ and $a_{**}$ such that \eqref{Reg} holds. If Assumption B holds, given $a_{\ast},a_{\ast\ast}>0$, $\sigma_{1}>0$, and $a_{j}=a_{j-1}+\sigma_{1}/2$, chosen according to
Lemma \ref{LemWeibull}, let
\[
c_{0} = b-s - \Lambda^{-1}(\Lambda(b-s)-a_{*}),\quad c_{k} = \Lambda
^{-1}(\Lambda(b-s)-a_{**}),
\]
$c_{j}=a_{j}(b-s)$ for $j=1,...,k-1$.

\item[\textbf{II}] Select $\delta_{0}^{\ast}\in(0,1/4)$ and let $\eta_{*}>0$
be large enough so that if $b-s>\eta_{*}$ then
\begin{equation}
\frac{J_{\ast}}{p_{\ast}}\leq\frac{1}{p_{\ast}}+\frac{(1-\delta_{0}^{\ast}%
)}{p_{\ast}}\mu\frac{\partial g(s)}{g(s)}. \label{SBJ*}%
\end{equation}

\item[\textbf{III}] Choose $\delta_{1}^{\ast}\in(0,\delta_0^*\mu^{2}(1-\delta_{0}^{\ast
})^{2}(1+\delta_{0}^{\ast})^{-10}/(k+1)^{2})$ such that if $b-s>\eta_{*}$ for
$\eta_{*}$ large enough
\[
J_{i}\leq\delta_{1}^{\ast}\delta_{0}^{\ast}\left(  \frac{\partial g(s)}%
{g(s)}\right)  ^{2}%
\]
for all $i=1,...,k$. Note that the $J_i$ terms are all zero for the regularly varying case.
\end{itemize}

The choice in III) is feasible because $\partial g\left(  s\right)  /g\left(
s\right)  =2\bar{F}(b-s)/G\left(  b-s\right)  $ decreases at most a polynomial
rate and $J_{i}$ terms derived in Propositions \ref{PropMid}, \ref{PropFirst},
and \ref{PropLast} are smaller than any polynomial rate. Both II) and III) can
be satisfied simultaneously by choosing $\eta_{*}$ sufficiently large. Now,
with the selections in II) and III) we have that%
\begin{align}
&  \frac{J_{\ast}}{p_{\ast}}+\frac{J_{\ast\ast}}{p_{\ast\ast}}+\sum_{i=1}%
^{k}\frac{J_{k}}{p_{k}}\nonumber\\
&  \leq\frac{1}{p_{\ast}}+\frac{(1-\delta_{0}^{\ast})}{p_{\ast}}\mu
\frac{\partial g(s)}{g(s)}+e^{2a_{\ast\ast}}\frac{\bar{F}^{2}(b-s)}%
{p_{\ast\ast}g(s)}+\delta_{1}^{\ast}\delta_{0}^{\ast}\left(  \frac{\partial
g(s)}{g(s)}\right)  ^{2}\sum_{i=1}^{k}\frac{1}{p_{i}}. \label{Sum}%
\end{align}
Now we must select $p_{\ast}$, $p_{\ast\ast}$ and the $p_{i}$'s so that
(\ref{Sum}) is less than unity in order to satisfy (\ref{LypSep}). Recall that
$p_{\ast\ast}$ represents the mixture probability associated to the occurrence
of the rare event in the next step. Therefore, it makes sense to select
$p_{\ast\ast}$ of order $\Theta(\bar{F}(b-s)/G\left(  b-s\right)  )$ as
$b-s\rightarrow\infty$. Motivated by this observation and given the analytical
form of the equation above we write
\begin{equation}
p_{\ast\ast}=\min\{\theta\partial g(s)/g(s),\widetilde{\varepsilon}%
\}=\min\{2\theta\bar{F}(b-s)/G(b-s),\widetilde{\varepsilon}\} \label{Selp**}%
\end{equation}
for some $\theta,\widetilde{\varepsilon}>0\ $(the precise values of $\theta$
and $\widetilde{\varepsilon}$ will be given momentarily) and let
\begin{equation}
p_{i}=\widetilde{\varepsilon}_{1}p_{\ast\ast} \label{Selpi}%
\end{equation}
for each $i=1,...,k$ for some $\widetilde{\varepsilon}_{1}>0$ small enough to
be defined shortly. This selection of $p_{i}$'s also makes intuitive sense
because the corresponding mixture terms will give rise to increments that are
large, yet not large enough to reach the level $b$ of the random walk and
therefore they correspond to ``rogue paths" -- as we called them in the
Introduction. In addition, one can always choose $\eta_*$ large enough such that $p_{**}<\tilde \varepsilon$ for all $b-s >\eta_*$. Given these selections we obtain
\begin{equation}
p_{\ast}=1-p_{\ast\ast}-k\widetilde{\varepsilon}_{1}p_{\ast\ast}.
\label{Selp*}%
\end{equation}

We then conclude that if $p_{\ast\ast}(1+k\widetilde{\varepsilon}_{1}%
)<\delta_{0}^{\ast}/2<1/4$ and $\tilde\varepsilon< \delta_{0}^{*}/2$, then%
\begin{align*}
&  \frac{J_{\ast}}{p_{\ast}}+\frac{J_{\ast\ast}}{p_{\ast\ast}}+\sum_{i=1}%
^{k}\frac{J_{k}}{p_{k}}\\
&  \leq1+p_{\ast\ast}\left(  1+k\widetilde{\varepsilon}_{1}\right)  \left(
1-\delta_{0}^{\ast}\right)  ^{-1} +\frac{(1-\delta_{0}^{\ast})}{\theta}\mu
p_{\ast\ast}+e^{2a_{\ast\ast}}\frac{p_{\ast\ast}}{4\theta^{2}\kappa}%
+k\delta_{1}^{\ast}\delta_{0}^{\ast}\frac{p_{\ast\ast}}{\theta^{2}%
\widetilde{\varepsilon}_{1}}\\
&  =1+p_{\ast\ast}\left[  \left(  1+k\widetilde{\varepsilon}_{1}\right)
\left(  1-\delta_{0}^{\ast}\right)  ^{-1} +\frac{(1-\delta_{0}^{\ast})}%
{\theta}\mu+\frac{e^{2a_{\ast\ast}}}{4\theta^{2}\kappa}+k\frac{\delta
_{1}^{\ast}\delta_{0}^{\ast}}{\theta^{2}\widetilde{\varepsilon}_{1}}\right]
.
\end{align*}
Now choose $\widetilde{\varepsilon}_{1}=\delta_{0}^{\ast}/(k+1)$ and then
select $\theta=-\mu(1-\delta_{0}^{\ast})/(1+\delta_{0}^{\ast})^{5}$. Then we
note that our selection of $\delta_{1}^{\ast}$ guarantees $\delta_{1}^{\ast
}\leq\theta^{2}\widetilde{\varepsilon}_{1}k^{-1}$. Finally it is required that
$\kappa\geq e^{2a_{\ast\ast}}/[4\theta^{2}\delta^{\ast}_{0}]$. Note that the
selection of $\delta_{0}^{\ast},\delta_{1}^{\ast}>0$ requires that
$b-s>\eta_{*}$ for $\eta_{*}>0$ sufficiently large, which is guaranteed
whenever $g\left(  s\right)  <1$ and $\kappa$ is sufficiently large. So, the
selection of $\kappa$ might possibly need to be increased in order to satisfy
all the constraints. All this selections in place yield (using the fact that
$\delta_{0}^{\ast}<1/4$)%
\[
\frac{J_{\ast}}{p_{\ast}}+\frac{J_{\ast\ast}}{p_{\ast\ast}}+\sum_{i=1}%
^{k}\frac{J_{k}}{p_{k}}\leq1+p_{\ast\ast}\left(  \left(  1+\delta_{0}^{\ast
}\right)  ^{2}-(1+\delta_{0}^{\ast})^{5}+2\delta_{0}^{\ast}\right)
\leq1+p_{\ast\ast}\delta_{0}^{\ast}\left(  \delta_{0}^{\ast}-1\right)  \leq1.
\]
The various parameter selections based on the previous discussion are
summarized next.

\begin{itemize}
\item[\textbf{IV}] Select $\widetilde{\varepsilon}_{1}=\delta_{0}^{\ast
}/(k+1)$, $\widetilde{\varepsilon}=(\delta_{0}^{\ast})^{2}$ (this guarantees
$p_{\ast\ast}(1+k\widetilde{\varepsilon}_{1})<\delta_{0}^{\ast}/2$) and
$\theta=-\mu(1-\delta_{0}^{\ast})/(1+\delta_{0}^{\ast})^{5}$. Set $p_{\ast
\ast}$, $p_{i}$ for $i=1,...,k$ and $p_{\ast}$ according to (\ref{Selp**}),
(\ref{Selpi}) and (\ref{Selp*}) respectively. Then, choose $\kappa$ large
enough so that $\kappa\geq e^{2a_{\ast\ast}}/[4\theta^{2}\delta_{0}^{\ast}]$
and at the same time $g\left(  s\right)  <1$ implies $b-s>\eta_{\ast}$, with
$\eta_{\ast}$ also appearing in II) above.
\end{itemize}

We now can provide a precise description of the importance sampling scheme.
Assume that the selection procedure indicated from I) to IV)\ above has been
performed and let $S_{0}=0$. Suppose that the current position at time $k$,
namely $S_{k}$, is equal to $s$ and that $\tau_{b}>k$. We simulate the
increment $X_{k+1}$ according to the following law. If $g\left(  s\right)  <1$
then we sample $X_{k+1}$ with the mixture density in \eqref{SampDist}.
Otherwise, if $g\left(  s\right)  =1$ we sample $X_{k+1}$ with density
$f\left(  \cdot\right)  $. The corresponding importance sampling estimator is
precisely%
\begin{equation}
Z_b=I(\tau_{b}<\infty)\prod_{i=1}^{\tau_{b}}r_{S_{i-1}}(S_{i}-S_{i-1}).
\label{EstZThm}%
\end{equation}
Note that we have not discussed the termination of the algorithm -- the
expected value of $\tau_{b}$ under the proposed importance sampling distribution.
Indeed, this is an issue that will be studied in the next section.
Here we are only interested in the variance analysis of $Z_b$.

\bigskip

\begin{proof}
[Proof of Theorem \ref{ThmSE1}]We must show that the estimator $Z_b$ defined in
(\ref{EstZThm}) is strongly efficient for estimating $u(b)$. Our discussion
summarized in the selection process from I) to IV) above indicates that
$g\left(  \cdot\right)  $ is a valid Lyapunov function. Therefore we have
that
\[
E^{Q}Z_b^{2}\leq g(0).
\]
Hence, according to (\ref{App}),
\[
\sup_{b>1}\frac{g(0)}{u^{2}(b)}<\infty.
\]
\end{proof}

\section{Controlling the expected termination time\label{SecTermination}}

As mentioned previously, if $Z_{b}$ is a strongly efficient estimator for
$u(b)$, in order to compute $u(b)$ with $\varepsilon$ relative error with at least
$1-\delta$ probability, one needs to generate $O(\varepsilon^{-2}\delta^{-1})$
(uniformly in $b$) i.i.d. copies of $Z_{b}$. The concept of strong efficiency
by itself does not capture the complexity of generating a single replication
of $Z_{b}$. In this section we will further investigate the computational cost
of generating $Z_{b}$. We shall assume that sampling from the densities
$q_{s}(\cdot)$ or $f\left(  \cdot\right)  $ takes at most a given constant
computational cost, so the analysis reduces to finding a suitable upper bound
for $E^{Q}\tau_{b}$.

We first assume that $F$ is a regularly varying distribution. We will see that
if I) to IV) and also V) below are satisfied then the expected termination time
is $O\left(  b\right)  $. The key message is that we can always select
$a_{\ast\ast},\delta_{0}^{\ast}>0$ sufficiently small in order to satisfy both
Lyapunov functions in Lemmas \ref{LemLyap} and \ref{LemTM}.

\begin{itemize}
\item[\textbf{V}] If Assumption A holds, let $\eta_{*}$ be large enough so
that if $g\left(  s\right)  <1$ (i.e. $b-s>\eta_{*}=G^{-1}\left(
\kappa^{-1/2}\right)  $) then
\[
\frac{\bar{F}(b-s)}{G(b-s)}\geq\frac{\left(  \iota-1\right)  \left(
1-\delta_{0}^{\ast}\right)  }{b-s}.
\]
We also have that $a_{\ast\ast},\delta_{0}^{\ast}>0$ are sufficiently close to
zero such that
\[
\delta_{2}^{\ast}=2(\iota-1)\frac{(1-\delta_{0}^{\ast})^{2}}{\left(
1+\delta_{0}^{\ast}\right)  ^{5}}e^{-a_{\ast\ast}}-1-2(1-e^{-2a_{\ast\ast
}/\iota})\left(  \iota-1\right)  >0
\]
with $\iota> 1.5$.
\end{itemize}

\begin{proposition}
\label{PropReg} Suppose that Assumption A holds and $\iota>1.5$. Then,
the selection indicated in I) to V) yields both Theorem \ref{ThmSE1} and
\[
E^{Q}(\tau_{b})<\rho_{0}+\rho_{1}b,
\]
for $\rho_{0},\rho_{1}\in\left(  0,\infty\right)  $ independent of $b$.
\end{proposition}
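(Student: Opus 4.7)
The plan is to invoke Lemma \ref{LemTM} with the piecewise Lyapunov function
\[
h(s) = (b-s)\,\mathbf{1}_{\{s < b-\eta_*\}} + C\,\mathbf{1}_{\{b-\eta_* \leq s < b\}}, \qquad h(s)=0 \text{ for } s \geq b,
\]
where $C>0$ is a large constant chosen last. Note $h(0) = b$ once $b > \eta_*$ (otherwise the proposition holds trivially with a constant bound), so Lemma \ref{LemTM} will yield $E^Q(\tau_b) \leq h(0)/\rho \leq \rho_0 + \rho_1 b$ provided we can verify a uniform drift inequality $E_s^Q(h(s+X)) \leq h(s) - \rho$ on $s < b$.

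The heart of the argument lies in the importance-sampling region $s < b-\eta_*$, where $h(s) = b-s$ and the inequality reduces to
\[
E_s^Q[\min(X, b-s)] \geq \rho > 0
\]
uniformly in $s$. I would split this expectation using the two-mixture structure (in the regular-variation case $c_0=c_k$ by \eqref{Reg}):
\[
E_s^Q[\min(X,b-s)] = p_\ast\,\frac{E[X; X\leq c_0]}{P(X\leq c_0)} + p_{\ast\ast}\,\frac{E[\min(X,b-s);X>c_0]}{P(X>c_0)}.
\]
The first piece converges to $\mu < 0$ as $b-s \to \infty$. For the second, I would compute the truncated conditional mean via integration by parts, using $\bar F(b-s)/\bar F(c_0) = e^{-a_{\ast\ast}}$, the asymptotic $c_0 \sim (b-s)e^{-a_{\ast\ast}/\iota}$ coming from $\Lambda(x)\sim \iota\log x$, the sharp ratio bound $\bar F(x)/G(x) \geq (\iota-1)(1-\delta_0^*)/x$ from Lemma \ref{LemIntTail1} (and step V), and the definition $p_{\ast\ast}=2\theta\bar F(b-s)/G(b-s)$ with $\theta = |\mu|(1-\delta_0^*)/(1+\delta_0^*)^5$ from step IV. The $(b-s)$ factors cancel, producing a positive order-unity contribution whose magnitude, after subtracting the regular-part deficit $|\mu|$, is controlled precisely by $\delta_2^\ast$ in V). This is where the condition $\iota > 1.5$ enters: as $a_{\ast\ast},\delta_0^\ast \to 0$, $\delta_2^\ast$ tends to $2(\iota-1)-1$, so the linear Lyapunov function descends in expectation only when $\iota > 3/2$.

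For $s \in [b-\eta_*, b)$ the importance sampling is switched off and $X \sim f$ has drift $\mu < 0$. Here $h(s) = C$, and expanding $E_s^P h(s+X)$ according to whether $s+X$ falls in the IS-on region, the IS-off region, or above $b$, I would arrange the inequality
\[
E_s^P h(s+X) - C \leq -\rho
\]
in the form
\[
C\bigl[1 - P(X \in [b-s-\eta_*, b-s))\bigr] \geq \rho + \eta_*\, P(X < b-s-\eta_*) - \mu.
\]
Since $[b-s-\eta_*, b-s) \subset [-\eta_*,\eta_*)$ uniformly in $s$, the bracket on the left is bounded below by $\epsilon := 1 - P(|X| \leq \eta_*) > 0$, and the right-hand side is bounded by a constant (depending on $\eta_*$ and $\mu$). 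Choosing $C$ sufficiently large (independent of $b$) yields the inequality with the same $\rho$ used in the IS-on region.

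The main obstacle is the first drift computation: one needs the large-jump mixture contribution to exceed $|\mu|$ by a fixed margin while also ensuring the algorithm's strong efficiency bounds of Theorem \ref{ThmSE1} are preserved, which ties the hands on the choice of $a_{\ast\ast}$, $\theta$, and $\delta_0^\ast$. The tight constraint that emerges is exactly the sign of $\delta_2^\ast$ in step V), and the clean limit $2(\iota-1)>1$ explains why $\iota>1.5$ is essentially unavoidable for $O(b)$ expected termination with a linear Lyapunov function. A remark at the end would note that the subquadratic rate of regular variation makes quadratic Lyapunov functions untenable (second moment blowup), which is why linear-in-$(b-s)$ Lyapunov functions are natural here.
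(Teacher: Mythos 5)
Your overall strategy is the same as the paper's: invoke Lemma \ref{LemTM} with a Lyapunov function linear in $b-s$ in the importance-sampling region, decompose the drift according to the two-mixture structure of $q_s$, and extract a positive drift from the sign condition $\delta_2^\ast>0$ in step V); your heuristic limit $\delta_2^\ast \to 2(\iota-1)-1$ as $a_{\ast\ast},\delta_0^\ast\to 0$ is exactly how the paper explains the threshold $\iota>3/2$. The core computation is correct.

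There is, however, a genuine gap in your choice of Lyapunov function. You take
\[
h(s) = (b-s)\,\mathbf{1}_{\{s < b-\eta_*\}} + C\,\mathbf{1}_{\{b-\eta_*\le s < b\}}
\]
with $C$ ``chosen large last.'' But the drift at a point $s<b-\eta_*$ is not $E_s^{Q}[\min(X,b-s)]$; it is
\[
E_s^{Q}[\min(X,b-s)] \;-\; \bigl(C-(b-s)\bigr)\,Q_s\bigl(b-\eta_*\le s+X < b\bigr),
\]
and your reduction silently drops the second term. For $b-s$ only modestly larger than $\eta_*$ (say $b-s\in(\eta_*,\eta_*/(1-e^{-a_{\ast\ast}/\iota})]$), the ``regular'' component $f_*$, whose support is $(-\infty,c_0]$ with $c_0\approx (b-s)(1-e^{-a_{\ast\ast}/\iota})$, itself places the walk in $[b-\eta_*,b)$ with $\Omega(1)$ probability, so $Q_s(b-\eta_*\le s+X<b)$ does not vanish. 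Meanwhile, the IS-off inequality you derive forces $C\ge \text{const}/\bigl(1-P(|X|\le\eta_*)\bigr)\gg \eta_*$. So large $C$ destroys the IS-on drift near $s=b-\eta_*$, and small $C$ destroys the IS-off drift: the two requirements are in conflict, and $C$ ``chosen last'' cannot resolve both. The paper avoids this entirely by using the continuous, strictly decreasing $h(s)=[\rho+(b-s)]\,I(s<b)$: the affine shift $\rho$ contributes $+\rho\,Q_s(X\ge b-s)\ge 0$ to the IS-on drift (helpful, never harmful) and contributes $+\rho\,P(X>\eta_*)$ in the IS-off region, which is made dominant by taking $\rho$ large. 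Replacing your piecewise-constant plateau by the paper's $\rho+(b-s)$ repairs the argument without changing any of your asymptotic estimates.
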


\begin{proof}
[Proof of Proposition \ref{PropReg}]We will use Lemma \ref{LemTM} to finish
the proof. We propose%
\[
h(s)=[\rho+b-s]I(s<b),
\]
for some $\rho>0$. First we note that%
\begin{align}
\label{drift} &  \left.  E^{Q}(b-s-X;X\in(\Lambda^{-1}\left(  \Lambda\left(
b-s\right)  -a_{\ast\ast}\right)  ,b-s])\right. \\
&  =p_{\ast\ast}\frac{P\left(  X\in(\Lambda^{-1}\left(  \Lambda\left(
b-s\right)  -a_{\ast\ast}\right)  ,b-s]\right)  }{P\left(  X>\Lambda
^{-1}\left(  \Lambda\left(  b-s\right)  -a_{\ast\ast}\right)  \right)  }
\times\left.  E(b-s-X|X\in(\Lambda^{-1}\left(  \Lambda\left(  b-s\right)
-a_{\ast\ast}\right)  ,b-s])\right.  .\nonumber
\end{align}
Recall that%
\begin{equation}
\label{p*}p_{\ast\ast}=\min\{2\theta\bar{F}(b-s)/G(b-s),\widetilde
{\varepsilon}\}=\frac{2\theta(\iota-1)}{b-s}\left(  1+o\left(  1\right)
\right)
\end{equation}
as $b-s\nearrow\infty$, where $\theta=-\mu(1-\delta_{0}^{\ast})/(1+\delta
_{0}^{\ast})^{5}$. Therefore, we can select $\eta_{*}>0$ large enough so that
if $b-s\geq\eta_{*}$%
\[
-\frac{2\mu(\iota-1)(1-\delta_{0}^{\ast})^{2}}{\left(  b-s\right)
(1+\delta_{0}^{\ast})^{5}}\leq p_{\ast\ast}\leq-\frac{2\mu(\iota-1)}{\left(
b-s\right)  }.
\]
Now, note that $\eta_{*}$ can be chosen sufficiently large so that if
$a=e^{-2a_{\ast\ast}/\iota}$, then%
\[
\exp\left(  -\Lambda\left(  b-s\right)  +\Lambda\left(  a\left(  b-s\right)
\right)  \right)  =\frac{P\left(  X>b-s\right)  }{P\left(  X>a(b-s\right)
)}\leq\exp\left(  -a_{\ast\ast}\right)
\]
as long as $b-s\geq\eta_{*}$. Therefore,
\[
X\geq\Lambda^{-1}\left(  \Lambda\left(  b-s\right)  -a_{\ast\ast}\right)
\]
implies $X\geq a(b-s)$ and we have that%
\begin{equation}
\label{drift1}\left.  E(b-s-X|X\in(\Lambda^{-1}\left(  \Lambda\left(
b-s\right)  -a_{\ast\ast}\right)  ,b-s])\right.  \leq\left(  1-a\right)
\left(  b-s\right)  .
\end{equation}
Together with \eqref{drift}, \eqref{p*}, and \eqref{drift1}, if $b-s\geq
\eta_{*}$ we obtain%
\begin{align*}
E^{Q}(b-s-X;X\in(\Lambda^{-1}\left(  \Lambda\left(  b-s\right)  -a_{\ast\ast
}\right)  ,b-s]) \leq2|\mu|\left(  1-a\right)  (\iota-1).
\end{align*}
The previous estimates imply that by choosing $\eta_{*}>0$ large enough we can
guarantee that for all $b-s\geq\eta_{*}$ we have
\begin{align*}
&  E^{Q}(h(s+X))\\
&  =E^{Q}(\rho+b-s-X;s+X\leq b)\\
&  \leq(1-Q(X>b-s))(\rho +b-s-\mu+o(1))+2|\mu|(1-a) (\iota-1)\\
&  =(1-p_{\ast\ast}e^{-a_{\ast\ast}})(h(s)  -\mu+o(1))+2|\mu|(1-a)  (\iota-1).
\end{align*}
By noting that $\theta\leq|\mu|$, if $b-s\geq\eta_{*}$ and $\eta_{*}$ is
selected large enough we obtain that
\begin{align*}
&  E^{Q}(h(s+X))\\
&  \leq h(s)-\mu-p_{\ast\ast}e^{-a_{\ast\ast}}h\left(  s\right)
+2|\mu|\left(  1-a\right)  (\iota-1) +o(1)\\
&  \leq h(s)-\mu+\frac{2\mu(\iota-1)(1-\delta_{0}^{\ast})^{2}}{(1+\delta
_{0}^{\ast})^{5}}e^{-a_{\ast\ast}}\\
&  +\frac{2\mu(\iota-1)(1-\delta_{0}^{\ast})^{2}}{(1+\delta_{0}^{\ast}%
)^{5}\left(  b-s\right)  }e^{-a_{\ast\ast}}\rho-2\mu\left(  1-a\right)
(\iota-1) +o(1).
\end{align*}
The above inequality holds for all $\rho>0$ provided that $b-s\geq\eta
_{*}=G^{-1}\left(  \kappa^{-1/2}\right)  $ so that $b-s>\eta_{*}$ if and only
if $g\left(  s\right)  <1$. Since $\iota> 1.5$, one can choose $a_{**}$ and
$\delta_{0}^{*}$ sufficiently small such that
\[
\delta_{2}^{\ast}=2(\iota-1)\frac{(1-\delta_{0}^{\ast})^{2}}{\left(
1+\delta_{0}^{\ast}\right)  ^{5}}e^{-a_{\ast\ast}}-1-2(1-e^{-2a_{\ast\ast
}/\iota})\left(  \iota-1\right)  >0
\]
we conclude that%
\[
E^{Q}(h(s+X))\leq h\left(  s\right)  +\mu\delta_{2}^{\ast}%
\]
as long as $g\left(  s\right)  <1$. Now, if $g\left(  s\right)  =1$ (i.e. if
$0\leq b-s<\eta_{*}$) we do not apply the change of measure and therefore%
\begin{align*}
E^{Q}(h(s+X))  &  =E[ \rho+b-s-X  ;X\leq b-s]\\
&  \leq h(s)-E(X|X<0)-\rho P\left(  X>\eta_{*}\right)  .
\end{align*}
Given the selection of $\kappa$ (and therefore of $\eta_{*}=G^{-1}\left(
\kappa^{-1/2}\right)  $), we can choose $\rho$ large such that%
\[
-E(X|X<0)-\rho P\left(  X>\eta_{*}\right)  \leq\mu\delta_{2}^{\ast}<0.
\]
Hence,%
\[
E^{Q}\tau_{b}<h(0)/\left\vert \mu\right\vert \delta_{2}^{\ast}.
\]
Thereby, the conclusion of Lemma \ref{LemTM} follows by redefining the constants.
\end{proof}

\bigskip


\begin{remark}
\label{RemOp3_2}The previous result concerning the condition $\iota>1.5$
raises a couple of natural questions. First, what is special about a tail
index $\iota = 1.5$? What would be required in order to obtain both strong
efficiency and $E^{Q}\tau_{b}=O\left(  b\right)  $ assuming only $\iota>1$? We
believe that the previous result is basically optimal. We do not pursue this
claim with full rigor here but provide an argument showing why we expect this
to be the case. First, Theorem \ref{THAK} implies the approximation%
\[
P\left(  b\delta n<\tau_{b}\leq b\delta(n+1)b|\tau_{b}<\infty\right)
=[P\left(  Y_{0}>\delta|\mu|n(\iota-1)\right)  -P\left(  Y_{0}>\delta|\mu|(n+1)(\iota-1)\right)
](1+o\left(  1\right)  )
\]
as $b\nearrow\infty$ for any $\delta>0$. Even if we could apply importance
sampling directly to $\tau_{b}$ (rather than doing it through the $X_{j}$'s)
it would be reasonable to select $Q\left(  \cdot\right)  $ so that%
\[
Q\left(  b\delta n<\tau_{b}\leq b\delta(n+1)\right)  =c_{1}\left(
\delta\right)  n^{-\gamma_{1}}(1+o\left(  1\right)  )
\]
as $b\nearrow\infty$. Since we wish to have $E^{Q}\tau_{b}<\infty$ we should
impose the constraint $\gamma_{1}>2$. Now, we have that%
\[
P\left(  Y_{0}>\delta|\mu|n(\iota-1)\right)  -P\left(  Y_{0}>\delta|\mu|(n+1)(\iota-1)\right)
=\delta|\mu|(\iota-1)\left (1+{\delta|\mu|n}\right)^{-\iota}(1+o\left(  1\right)  )
\]
as $n\nearrow\infty$. On the other hand, strong efficiency imposes the
constraint that%
\begin{equation}
\sum_{n=1}^{\infty}\left(  \frac{P\left(  Y_{0}>\delta(n+1)\right)  -P\left(
Y_{0}>\delta n\right)  }{Q\left(  b\delta n<\tau_{b}\leq b\delta(n+1)\right)
}\right)  ^{2}Q\left(  b\delta n<\tau_{b}\leq b\delta(n+1)\right)
<\infty\label{SEC}%
\end{equation}
which suggests%
\begin{equation}\label{OPT}
\sum_{n=1}^{\infty}n^{-2\iota+\gamma_{1}}<\infty.
\end{equation}
Consequently, we also must have $2\iota>\gamma_{1}+1$. Combined with the
previous constraint (i.e. $\gamma_{1}>2$), it yields $\iota>3/2$.
\end{remark}

\bigskip

We will show that if $\iota>1$ we can control $1+\gamma$ relative moments (for
$\gamma$ small enough) and still keep $E^{Q}\tau_{b}=O\left(  b\right)  $.
However, before we do so, in order to complete the argument for the proof of
Theorem \ref{ThmSEb} we will continue working with $\gamma=1$ in the context
of Assumption B.

\begin{proposition}
\label{PropWeibull} If Assumptions B1-3 hold, we assume there exists
$\delta>0$ and $\beta\in\lbrack0,\beta_{0}]$ such that $\lambda(x)\geq\delta
x^{\beta-1}$ for $x$ sufficiently large. Then, there exist $a_{\ast}$,
$a_{\ast\ast}$, $p_{\ast}$, $p_{\ast\ast}$, $p_{j}$, $j=1,...,k$, such that
Theorem \ref{ThmSE1} holds and, in addition,
\[
E^{Q}\tau_{b}\leq\rho_{0}+\rho_{1}b^{1-\beta}.
\]
for $\rho_{0}$ and $\rho_{1}$ sufficiently large.
\end{proposition}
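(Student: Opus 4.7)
My approach is to apply Lemma \ref{LemTM}, adapting the argument of Proposition \ref{PropReg} but with a concave (rather than linear) Lyapunov function that reflects the conjectured termination time $O(b^{1-\beta})$. I propose
\[
h(s) \;=\; \bigl[\rho_{0} + \rho_{1}(b-s)^{1-\beta}\bigr] I(s<b),
\]
and verify $E_{s}^{Q}[h(s+X)] \leq h(s) - c_{\ast}$ uniformly in $s<b$ for some $c_{\ast}>0$ and suitably chosen $\rho_{0}, \rho_{1}>0$, which yields $E^{Q}\tau_{b} \leq h(0)/c_{\ast} = (\rho_{0}+\rho_{1}b^{1-\beta})/c_{\ast}$, as claimed.

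In the importance sampling regime $g(s)<1$ I decompose $E_{s}^{Q}[h(s+X)]$ according to the mixture structure of \eqref{Sel_q} and control each piece. For the regular component $f_{\ast}$, a Taylor expansion of $(b-s-X)^{1-\beta}$ around $X=0$ produces a leading contribution of size $(1-\beta)\rho_{1}|\mu|(b-s)^{-\beta}$, a vanishing positive drift when $\beta>0$ (bounded when $\beta=0$), with the quadratic remainder being favorable thanks to the concavity of $h$. For the large-jump component $f_{\ast\ast}$: with probability $p_{\ast\ast}e^{-a_{\ast\ast}}$ the walk exceeds $b$ (giving $h(s+X)=0$), and the remaining $p_{\ast\ast}(1-e^{-a_{\ast\ast}})$ lands in $(s+c_{k},b]$, where $h(s+X) \leq \rho_{0}+\rho_{1}(b-s-c_{k})^{1-\beta}$ is of order $(b-s)^{(1-\beta)^{2}}$, negligible compared to $h(s)$. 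The intermediate components $J_{1},\dots,J_{k}$ decay faster than any polynomial by exactly the estimates of Propositions \ref{PropMid}, \ref{PropFirst}, and \ref{PropLast}. The critical input is the pointwise lower bound
\[
p_{\ast\ast} \;\geq\; 2\theta\,\bar{F}(b-s)/G(b-s) \;\geq\; C_{1}\lambda(b-s) \;\geq\; C_{1}\delta(b-s)^{\beta-1},
\]
which makes the big-jump contribution $-p_{\ast\ast}h(s) \leq -C_{1}\delta\rho_{1}$, a strictly negative constant. Summing, the drift is bounded above by $-C_{1}\delta\rho_{1}+(1-\beta)\rho_{1}|\mu|(b-s)^{-\beta}+o(1)$, which is $\leq -c_{\ast}<0$ either automatically for large $b-s$ (when $\beta>0$) or upon taking $\delta$ large enough (when $\beta=0$, consistent with the remark that $\delta$ can be made arbitrarily large under B1).

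In the boundary regime $g(s)=1$ (i.e.\ $b-s\leq\eta_{\ast}$) no importance sampling is applied and $X$ is drawn from $F$. Here $h(s)$ is bounded, the event $\{s+X\geq b\}$ has probability at least $P(X>\eta_{\ast})>0$ and contributes $-\rho_{0}P(X>\eta_{\ast})$ to the drift, while the bounded positive contribution from $\{s+X<b\}$ can be absorbed by choosing $\rho_{0}$ large enough (exactly as in Proposition \ref{PropReg}). This completes the verification of the hypothesis of Lemma \ref{LemTM}.

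The main technical obstacle is establishing the asymptotic comparison $\bar{F}(x)/G(x) \geq C_{1}\lambda(x)$ used above. Under B3 alone one only obtains the reverse inequality $\bar{F}/G \leq \lambda$, so the hypothesized lower bound on $\lambda$ must be used actively. Writing $G(x) = \bar{F}(x)\int_{x}^{\infty}e^{-(\Lambda(y)-\Lambda(x))}dy$ and invoking $\Lambda(y)-\Lambda(x) \geq \delta(y^{\beta}-x^{\beta})/\beta$ for $\beta>0$ (or a logarithmic substitute for $\beta=0$), the change of variables $u = \delta y^{\beta}/\beta$ yields $G(x) \leq C\bar{F}(x)/\lambda(x)$ for all $x$ sufficiently large, which is exactly the comparison required to drive the Lyapunov argument.
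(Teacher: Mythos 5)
Your overall approach matches the paper's: verify the drift condition of Lemma \ref{LemTM} with $h(s)=[\rho_{0}+\rho_{1}(b-s)^{1-\beta}]I(s<b)$, decompose the drift over the mixture components, and extract a strictly negative constant from a lower bound on $p_{\ast\ast}$; the boundary regime $g(s)=1$ and the choice of $\rho_{0}$ are handled exactly as in Proposition \ref{PropReg}.

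However, the technical lemma you isolate at the end is incorrect as stated. You claim that invoking $\Lambda(y)-\Lambda(x)\geq\delta(y^{\beta}-x^{\beta})/\beta$ in $G(x)=\bar{F}(x)\int_{x}^{\infty}e^{-(\Lambda(y)-\Lambda(x))}dy$ and changing variables yields $G(x)\leq C\bar{F}(x)/\lambda(x)$. It does not: that computation gives $\int_{x}^{\infty}e^{-\delta(y^{\beta}-x^{\beta})/\beta}dy=(1+o(1))x^{1-\beta}/\delta$, hence only $G(x)\leq C\bar{F}(x)x^{1-\beta}/\delta$, i.e.\ $\bar{F}(x)/G(x)\geq\delta x^{\beta-1}/C$. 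Your stronger conclusion $\bar{F}/G\geq C_{1}\lambda$ would additionally require the reverse comparison $\lambda(x)\leq Cx^{\beta-1}$, which is not among the hypotheses; B2 and Lemma \ref{LemRateFun} give $\lambda(x)=O(x^{\beta_{0}-1})$ with $\beta_{0}\geq\beta$, and when $\beta<\beta_{0}$ the ratio $\lambda(x)/x^{\beta-1}$ need not be bounded. (In fact under B1--B3 alone $\bar F/G$ tracks roughly $\lambda$ near $2x$ rather than at $x$, so without a regularity condition of the form $\lambda(2x)\geq c\lambda(x)$ the chain $\bar F/G\geq C_{1}\lambda(x)$ can fail.) The slip is fortunately harmless to the rest of the argument: all the drift estimate uses is $p_{\ast\ast}h(s)\geq 2\theta(\bar F/G)\cdot\rho_{1}(b-s)^{1-\beta}\geq 2\theta\delta\rho_{1}/C$, for which the weaker bound $\bar{F}(x)/G(x)\geq\delta x^{\beta-1}/C$ suffices; this is exactly what your change of variables correctly carried out delivers, and it is what the paper obtains directly by applying L'Hopital's rule to $x^{1-\beta}\bar{F}(x)/G(x)$. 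If you retarget that paragraph at $\bar F/G\geq\delta x^{\beta-1}/C$ rather than at the false intermediate comparison $\bar F/G\geq C_{1}\lambda$, the proof is correct and coincides with the paper's.
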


\begin{proof}
[Proof of Proposition \ref{PropWeibull}]Let $\beta \in \left( 0,\beta _{0}\right) $ and consider the Lyapunov
function,
\begin{equation*}
h(s)=[\rho +(b-s)^{1-\beta }]I(s<b).
\end{equation*}%
For all $\varepsilon >0$,%
\begin{align*}
& E^{Q}(h(s+X)) \\
& \leq Q\Big(X\leq (1-\varepsilon )(b-s)\Big)E^{Q}\Big(\rho
+(b-s-X)^{1-\beta }|X\leq (1-\varepsilon )(b-s)\Big) \\
& +(\rho +\varepsilon ^{1-\beta }(b-s)^{1-\beta })Q\left( (1-\varepsilon
)(b-s)\leq X\leq b-s\right) .
\end{align*}%
With Assumptions B1-3, if $\beta =0$, using L'Hopital rule on a subsequence,
we have%
\begin{equation*}
\underline{\lim }_{x\rightarrow \infty }\frac{x\bar{F}(x)}{G(x)}=\underline{%
\lim }_{x\rightarrow \infty }\frac{-\bar{F}(x)+x\lambda (x)\bar{F}(x)}{\bar{F%
}(x)}=\infty ;
\end{equation*}%
if $\beta \in (0,\beta _{0})$,%
\begin{equation*}
\underline{\lim }_{x\rightarrow \infty }\frac{x^{1-\beta }\bar{F}(x)}{G(x)}=%
\underline{\lim }_{x\rightarrow \infty }x^{1-\beta }\lambda (x)- (1-\beta)x^{-\beta
}\geq \delta .
\end{equation*}%
There exists $\varepsilon ,\delta ^{\prime }>0$ small enough and $\eta
_{\ast }$ sufficiently large such that for all $b-s>\eta _{\ast }$ and all $%
\rho >0$,
\begin{align*}
& E^{Q}(h(s+X)) \\
& \leq (1-2\theta \delta (b-s)^{\beta -1})\Big(\rho +(b-s)^{1-\beta
}-(1+\delta ^{\prime })(1-\beta )(b-s)^{-\beta }\mu \Big) \\
& +2\theta \delta (\rho +\varepsilon ^{1-\beta }(b-s)^{1-\beta
})(b-s)^{\beta -1} \\
& \leq (1-2\theta \delta (b-s)^{\beta -1})\Big(h(s)-(1+\delta ^{\prime
})(1-\beta )(b-s)^{-\beta }\mu \Big) \\
& +2\theta \delta (\rho +\varepsilon ^{1-\beta }(b-s)^{1-\beta
})(b-s)^{\beta -1} \\
& 
\leq h(s)-\theta \delta .
\end{align*}%
The above derivation is true for all $\beta >0$ satisfying conditions in the
proposition. When $\beta =0$ due to Assumption B1, one can always choose $\delta$ large such that $2\theta\delta >3|\mu |$. This allows us to control the contribution of the term $(1+\delta ^{\prime })(1-\beta )(b-s)^{-\beta }\mu$ in the above display. Therefore, this derivation is true
for all $\beta \in \lbrack 0,\beta _{0}]$.

On the other hand, if $b-s\leq\eta_{\ast}$ and we select $%
\eta_{\ast}=G^{-1}(\kappa^{-1/2})$ so that $g\left( s\right) <1$ if and only
if $b-s>\eta_{\ast}$, we obtain that%
\begin{align*}
& E^{Q}h(s+X) \\
& =Eh(s+X)\leq\rho+\left( b-s\right) ^{1-\beta}-\rho P\left( X>\eta_{\ast
}\right) +E(\left( b-s-X\right) ^{1-\beta}-\left( b-s\right) ^{1-\beta
};X\leq b-s).
\end{align*}
Clearly, once $\eta_{\ast}$ has been selected we can pick $\rho$ large
enough so that%
\begin{equation*}
-\rho P\left( X>\eta_{\ast}\right) +\sup_{0\leq b-s\leq\eta_{\ast}}E(\left(
b-s-X\right) ^{1-\beta}-\left( b-s\right) ^{1-\beta};X\leq b-s)\leq
-\delta/2.
\end{equation*}
Therefore,
\begin{equation*}
E^{Q}(h(s+X))\leq h(s)-\delta/2
\end{equation*}
and we conclude the result by applying Lemma \ref{LemTM}.
\end{proof}

\bigskip

\begin{proof}
[Proof of Theorem \ref{ThmSEb}]The conclusion follows immediately from
Propositions \ref{PropReg} and \ref{PropWeibull}.
\end{proof}

\bigskip

Finally, we come back to the problem of controlling $(1+\gamma)$-th moments in
order to guarantee $E^{Q}\tau_{b}=O\left(  b\right)  $ when $\bar F$ is
regularly varying with $\iota>1$. This corresponds to Theorem
\ref{Thm1_pls_Gamma}. The next proposition is central to the proof.

\begin{proposition}
\label{PropReg1_g} Suppose that Assumption A holds and that $\iota\in (1,1.5]$. Then,
we can choose $a_{\ast}$, $a_{\ast\ast}$, $p_{\ast}$, and $p_{\ast\ast}$, such that for each $\gamma\in(0,(\iota-1)/(2-\iota))$ there exists a $K>0$,
\[
E^{Q}Z^{1+\gamma}\leq Ku\left(  b\right)  ^{1+\gamma}%
\]
and $E^{Q}\tau_{b}=O\left(  b\right)  $ as $b\rightarrow\infty$.
\end{proposition}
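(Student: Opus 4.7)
The plan is to follow the template established by Proposition~\ref{PropReg}: construct a moment Lyapunov function to apply Lemma~\ref{LemLyap} at exponent $\gamma$, and the linear Lyapunov function of Section~\ref{SecTermination} to apply Lemma~\ref{LemTM}. The new feature relative to the $\gamma=1$ case is that requiring only the $(1+\gamma)$-th moment, rather than the second moment, leaves extra room in choosing the large-jump mixture weight $p_{**}$, and this extra room is precisely what allows $\iota\in(1,1.5]$ provided $\gamma<(\iota-1)/(2-\iota)$. Since Assumption~A is in force, I specialize the mixture family to $k=0$ via~\eqref{Reg}, so no middle-interval mixtures appear.

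For the moment bound I would introduce
\[
g(s)=\min\{\kappa G^{1+\gamma}(b-s),1\},
\]
which satisfies $\partial g(s)/g(s)=(1+\gamma)\bar{F}(b-s)/G(b-s)$ on $\{g<1\}$ and $g(0)=O(u(b)^{1+\gamma})$ by Pakes--Veraverbeke (Theorem~\ref{ThmPV}). Choosing $p_{**}=\min\{2\theta\bar{F}(b-s)/G(b-s),\widetilde{\varepsilon}\}$ and $p_{*}=1-p_{**}$, and using $g(s+X)\le 1$ together with $P(X>c_{k})=e^{a_{**}}\bar{F}(b-s)$, the ``large-jump'' contribution gives
\[
\frac{J_{**}^{(\gamma)}}{p_{**}^{\gamma}}\le \frac{e^{(1+\gamma)a_{**}}}{(2\theta)^{\gamma}\kappa}\,\frac{\bar{F}(b-s)}{G(b-s)}.
\]
For the ``regular'' contribution, I would repeat the Taylor-expansion argument of Proposition~\ref{PropJReg} with the factor $2$ replaced by $1+\gamma$ in the derivative; dominated convergence (supported by Lemma~\ref{LemIntTail1} and the regular variation of $G$) yields
\[
\int_{x\le c_{0}}f(x)\,\frac{g(s+x)}{g(s)}\,dx = P(X\le c_{0}) + (1+\gamma)(1-\delta_{0}^{*})\mu\,\frac{\bar{F}(b-s)}{G(b-s)} + o\!\left(\frac{\bar{F}(b-s)}{G(b-s)}\right).
\]
Multiplying by $(P(X\le c_{0})/p_{*})^{\gamma}$, expanding $(1-p_{**})^{-\gamma}=1+\gamma p_{**}+O(p_{**}^{2})$, and absorbing $\bar{F}$-order terms into the remainder (since $\bar{F}=o(\bar{F}/G)$), the Lyapunov inequality $J_{*}^{(\gamma)}/p_{*}^{\gamma}+J_{**}^{(\gamma)}/p_{**}^{\gamma}\le 1$ reduces, to leading order in $\bar{F}/G$, to
\[
2\theta\gamma - (1+\gamma)|\mu| + \frac{e^{(1+\gamma)a_{**}}}{(2\theta)^{\gamma}\kappa}\le 0.
\]
For any fixed $\theta<(1+\gamma)|\mu|/(2\gamma)$ this is achievable by taking $a_{**}$ small and $\kappa$ large, so that $E^{Q}Z_{b}^{1+\gamma}\le g(0)=O(u(b)^{1+\gamma})$.

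For the termination bound I would reuse $h(s)=[\rho+b-s]I(s<b)$; the drift computation of Proposition~\ref{PropReg} runs verbatim (only $\theta$ changes role), giving the sufficient condition
\[
2\theta(\iota-1)e^{-a_{**}} > |\mu|\bigl[1+2(1-e^{-2a_{**}/\iota})(\iota-1)\bigr],
\]
which as $a_{**}\downarrow 0$ reduces to $\theta>|\mu|/(2(\iota-1))$. The main obstacle, and the place where the hypothesis on $\gamma$ is decisive, is reconciling the two ranges
\[
\frac{|\mu|}{2(\iota-1)}<\theta<\frac{(1+\gamma)|\mu|}{2\gamma},
\]
whose non-emptiness is equivalent to $\gamma(2-\iota)<\iota-1$, i.e.\ $\gamma<(\iota-1)/(2-\iota)$. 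Picking any such $\theta$ together with $a_{**}$ small and $\kappa$ large satisfies both Lyapunov inequalities simultaneously, and Lemma~\ref{LemTM} then delivers $E^{Q}\tau_{b}\le h(0)/\rho'=\rho_{0}+\rho_{1}b$. The final complexity bound in Theorem~\ref{Thm1_pls_Gamma} follows from Markov's inequality applied to $Z_{b}^{1+\gamma}$.
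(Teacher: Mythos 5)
Your proposal matches the paper's proof in all essentials: the moment Lyapunov function $g(s)=\min\{\kappa G^{1+\gamma}(b-s),1\}$, the two-term decomposition $J_{*}/(1-p_{**})^{\gamma}+J_{**}/p_{**}^{\gamma}\le 1$ with the $J_{i}$ terms vanishing under Assumption A, the linear Lyapunov function $h(s)=[\rho+b-s]I(s<b)$ for the termination bound, and the observation that the two resulting constraints on $\theta$ are compatible precisely when $\gamma<(\iota-1)/(2-\iota)$. The only difference is a harmless reparameterization of $\theta$ (your $p_{**}=2\theta\bar F/G$ versus the paper's $p_{**}=\theta(1+\gamma)\bar F/G$) and that you present the compatibility condition as a nonempty interval for $\theta$ rather than exhibiting a specific choice, which is a cosmetic rather than substantive change.
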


\begin{remark}\label{RemGamma}
With a very similar argument as in Remark \ref{RemOp3_2}, we believe that the bound $1+(\iota-1)/(2-\iota)$ is the highest moment that one can control while maintaining $O(b)$ expected termination time. An analogous constraint to \eqref{OPT} is that
$$\sum_{n=1}^{\infty} n^{-(1+\gamma)(\iota-\gamma_1)-\gamma_1}<\infty.$$
This implies that $\gamma < (\iota -1)/(\gamma_1 -\iota)\leq (\iota -1)/(2 -\iota)$. Note that it is necessary to impose $\gamma_1 >2$ to have $O(b)$ expected termination time.
\end{remark}

\begin{proof}
[Proof of Proposition \ref{PropReg1_g}]The strategy is completely analogous to
the case of $\gamma=1$. We define
\[
g_{\gamma}\left(  s\right)  =\min\{\kappa G\left(  b-s\right)  ^{1+\gamma
},1\}.
\]
We need to verify the Lyapunov inequality only on $g_{\gamma}\left(  s\right)
<1$ (as before the case $g_{\gamma}\left(  s\right)  =1$ is automatic). We
select%
\[
p_{\ast\ast}=\min\{\theta\partial g_{\gamma}\left(  s\right)  /g_{\gamma
}\left(  s\right)  ,\tilde \varepsilon\}
\]
for $\tilde \varepsilon$ sufficiently small. Applying Lemma \ref{LemLyap} we need
to show that%
\begin{equation}
\frac{J_{\ast}}{\left(  1-p_{\ast\ast}\right)  ^{\gamma}}+\frac{J_{\ast\ast}%
}{p_{\ast\ast}^{\gamma}}\leq1, \label{LI1}%
\end{equation}
where $J_{\ast}$ and $J_{\ast\ast}$ are redefined as
\begin{align*}
J_{\ast}  &  =P(X\leq b-s-\Lambda^{-1}(\Lambda(b-s)-a_{\ast\ast}))^{\gamma
}E\left(  \frac{g_{\gamma}(s+X)}{g_{\gamma}(s)};X\leq b-s-\Lambda^{-1}%
(\Lambda(b-s)-a_{\ast})\right) \\
J_{\ast\ast}  &  =P(X>\Lambda^{-1}(\Lambda(b-s)-a_{\ast\ast}))^{\gamma
}E\left(  \frac{g_{\gamma}(s+X)}{g_{\gamma}(s)};X>\Lambda^{-1}(\Lambda
(b-s)-a_{\ast\ast})\right)  .
\end{align*}
Note that the $J_i$ terms analogous to \eqref{Ji} and \eqref{Jk} are all zero.
At the same time, we need to make sure that we can find $\rho>0$ such that if
\[
h\left(  s\right)  =\left[  \rho+(b-s)\right]  I\left(  b-s>0\right)
\]
then
\begin{equation}
E^{Q}h\left(  s+X\right)  \leq h\left(  s\right)  -\varepsilon\label{LI2}%
\end{equation}
for some $\varepsilon>0$ if $b>s$.

Inequality (\ref{LI1}) can be obtained following the same steps as we did in
I) to IV) in the previous section. First we note that if $\eta_{\ast}%
=G^{-1}\left(  \kappa^{-1/(1+\gamma)}\right)  $ is large enough (or
equivalently $\kappa$ is sufficiently large)
\[
\frac{J_{\ast\ast}}{p_{\ast\ast}^{\gamma}}\leq\frac{P(X>\Lambda^{-1}%
(\Lambda(b-s)-a_{\ast\ast}))^{\gamma+1}}{g\left(  s\right)  p_{\ast\ast
}^{\gamma}}=\frac{e^{a_{\ast\ast}(\gamma+1)}\overline{F}\left(  b-s\right)
}{\kappa(1+\gamma)^{\gamma}\theta^{\gamma}G\left(  b-s\right)  }.
\]
Also, for any $\delta>0$ we can ensure that if $\eta_{\ast}$ is large enough
and if $b-s>\eta_{\ast}$ then
\[
\frac{\theta\left(  1+\gamma\right)  \left(  \iota-1\right)  \left(
1-\delta\right)  }{b-s}\leq p_{\ast\ast}=\frac{\theta(1+\gamma)\overline
{F}\left(  b-s\right)  }{G\left(  b-s\right)  }\leq\frac{\theta\left(
1+\gamma\right)  \left(  \iota-1\right)  \left(  1+\delta\right)  }{b-s}%
\]
and we also can ensure that%
\[
\frac{J_{\ast}}{\left(  1-p_{\ast\ast}\right)  ^{\gamma}}\leq(1+\gamma
(1+\delta)p_{\ast\ast})E\left(  \frac{g_{\gamma}(s+X)}{g_{\gamma}(s)};X\leq
b-s-\Lambda^{-1}(\Lambda(b-s)-a_{\ast})\right)  .
\]
A similar development to that of Proposition \ref{PropJReg} yields that
$\eta_{\ast}$ can be chosen so that if $b-s>\eta_{\ast}$,
\[
E\left(  \frac{g_{\gamma}(s+X)}{g_{\gamma}(s)};X\leq b-s-\Lambda^{-1}%
(\Lambda(b-s)-a_{\ast})\right)  \leq1+\mu(1-\delta)\frac{\partial g_{\gamma
}(s)}{g_{\gamma}\left(  s\right)  }.
\]
Therefore,
\begin{align*}
\frac{J_{\ast}}{\left(  1-p_{\ast\ast}\right)  ^{\gamma}}  &  \leq\left(
1+\mu(1-\delta)\frac{\partial g_{\gamma}(s)}{g_{\gamma}\left(  s\right)
}\right)  (1+\gamma(1+\delta)p_{\ast\ast})\\
&  =\left(  1+\mu(1-\delta)\frac{\partial g_{\gamma}(s)}{g_{\gamma}\left(
s\right)  }\right)  \left(  1+\gamma(1+\delta)\frac{\theta\partial g_{\gamma
}(s)}{g_{\gamma}\left(  s\right)  }\right)
\end{align*}
and then
\begin{align*}
&  \frac{J_{\ast}}{\left(  1-p_{\ast\ast}\right)  ^{\gamma}}+\frac{J_{\ast
\ast}}{p_{\ast\ast}^{\gamma}}\\
&  \leq\left(  1+\frac{\mu(1-\delta)(1+\gamma)\overline{F}\left(  b-s\right)
}{G\left(  b-s\right)  }\right)  \left(  1+\frac{\theta\gamma(1+\delta
)(1+\gamma)\overline{F}\left(  b-s\right)  }{G\left(  b-s\right)  }\right) \\
&  +\frac{e^{a_{\ast\ast}(\gamma+1)}}{\kappa(1+\gamma)^{\gamma}\theta^{\gamma
}}\times\frac{\overline{F}\left(  b-s\right)  }{G\left(  b-s\right)  }.
\end{align*}
We then can select $\theta=\left\vert \mu\right\vert (1-\delta)^{2}%
/[\gamma(1+\delta)]$, $a_{\ast\ast}<\delta$ and $\kappa$ sufficiently large such that the right hand side the above display is less than one.
At the same time, the analysis required to enforce (\ref{LI2}) is similar to
that of Proposition \ref{PropReg}. We, therefore, omit the details. The key
fact is now that%
\[
-\frac{(1+\gamma)\mu(\iota-1)(1-\delta)^{3}}{\gamma\left(  b-s\right)
(1+\delta)}\leq p_{\ast\ast}%
\]
and now we need to enforce%
\[
\delta_{2}^{\ast}=\frac{(1+\gamma)(\iota-1)(1-\delta)^{3}}{\gamma\left(
1+\delta\right)  }e^{-a_{\ast\ast}}-1-(1+\gamma)(1-a)\left(  \iota-1\right)
>0,
\]
where $a=e^{-2a_{\ast\ast}/\iota}$. This can always be done if we choose
$\gamma<(\iota-1)/(2-\iota)$ and $\delta,a_{\ast\ast}>0$ sufficiently small.
\end{proof}

\bigskip

Now we provide the proof of Theorem \ref{Thm1_pls_Gamma}.

\begin{proof}
[Proof of Theorem \ref{Thm1_pls_Gamma}]From the result in Proposition
\ref{PropReg1_g}, the $(1+\gamma)$-th moment of the estimator and $E^{Q}%
\tau_{b}$ is properly controlled. We need to bound the total computation time
to achieve prescribed relative accuracy. Let $W_{1},W_{2},...$ be a sequence
of non-negative i.i.d. random variables with unit mean and suppose that
$EW_{i}^{1+\gamma}\leq K$ for $\gamma>0$. Define $R_{n}=(W_{1}+W_{2}%
+...+W_{n})/n$ and note that%
\[
P\left(  \left\vert R_{n}-1\right\vert \geq\varepsilon\right)  \leq P\left(
\left\vert R_{n}-1\right\vert \geq\varepsilon,\max_{i\leq n}W_{i}\leq
n\right)  +P\left(  \max_{i\leq n}W_{i}>n\right)  .
\]
Now using Chebyshev's inequality we have that%
\[
P\left(  \max_{i\leq n}W_{i}>n\right)  \leq nP\left(  W_{1}>n\right)
\leq\frac{K}{n^{\gamma}}.
\]
On the other hand, given $\max_{i\leq n} W_{i} < n$, $W_{i}$'s are still
i.i.d. and
\[
P\left(  \left\vert R_{n}-1\right\vert \geq\varepsilon\Big |\max_{i\leq
n}W_{i}\leq n\right)  \leq\frac{E\left(  W_{i}^{2}|W_{i}\leq n\right)
+o(1)}{n\varepsilon^{2}}=\frac{E\left(  W_{i}^{2}I(W_{i}\leq n\right)
)+o(1)}{n\varepsilon^{2}P\left(  W_{i}\leq n\right)  }.
\]
The $o(1)$ term in the above display is in fact $(E(W_{i} | W_{i} \leq
n)-1)^{2}$. Then, we have that for $\gamma\in\left(  0,1\right)  $%
\begin{align*}
E\left(  W_{i}^{2}I(W_{i}\leq n\right)  )  &  =2E\left(  I\left(  W_{i}\leq
n\right)  \int_{0}^{W_{i}}tdt\right) \\
&  \leq2\int_{0}^{n}tP\left(  W_{i}>t\right)  dt\leq2K\int_{0}^{n}\frac
{1}{t^{\gamma}}dt=\frac{2K}{1-\gamma}n^{1-\gamma}.
\end{align*}
Therefore, for $n$ sufficiently large we have that%
\[
P\left(  \left\vert R_{n}-1\right\vert \geq\varepsilon,\max_{i\leq n}W_{i}\leq
n\right)  \leq\frac{3K}{\left(  1-\gamma\right)  \varepsilon^{2}n^{\gamma}}.
\]
Thus, we have that%
\[
P\left(  \left\vert R_{n}-1\right\vert \geq\varepsilon\right)  \leq\frac
{3K}{\left(  1-\gamma\right)  \varepsilon^{2}n^{\gamma}}+\frac{K}{n^{\gamma}%
}\leq\frac{4K}{\left(  1-\gamma\right)  \varepsilon^{2}n^{\gamma}}.
\]
Applying these considerations to $W_{n}=Z_b/u\left(  b\right)  $ and letting
$4K/[(1-\gamma)\varepsilon^{2}n^{\gamma}]\leq\delta$ we obtain the conclusion
of the theorem.
\end{proof}

\section{Approximation in total variation and conditional limit theorems}

\label{SecTV}

\subsection{Approximation of the random walk up to $\tau_{b}$}

We will need the following lemma for the proof of approximation in total variation.

\begin{lemma}
\label{LemTV1}Let $Q_{0}$ and $Q_{1}$ be probability measures defined on the
same $\sigma$-field $\mathcal{F}$ such that $dQ_{1}=M^{-1}dQ_{0}$ for a
positive r.v. $M>0$. Suppose that for some $\varepsilon>0$, $E^{Q_{1}}\left(
M^{2}\right)  =E^{Q_{0}}M\leq1+\varepsilon$. Then,
\[
\sup_{A\in\mathcal{F}}\left\vert Q_{1}\left(  A\right)  -Q_{0}\left(
A\right)  \right\vert \leq\varepsilon^{1/2}.
\]

\end{lemma}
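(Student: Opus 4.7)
My plan is to reduce the total variation distance to an $L^1$-type quantity under $Q_1$ and then apply Cauchy--Schwarz, using the hypothesis to bound the second moment.

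First, I would rewrite the difference of measures using the Radon--Nikodym derivative. Since $dQ_1 = M^{-1}\,dQ_0$, we have, for any $A \in \mathcal{F}$,
\[
Q_1(A) - Q_0(A) = \int_A (1 - M)\,dQ_1.
\]
Taking $A = \{M \le 1\}$ on one hand and $A = \{M > 1\}$ on the other gives the standard identity
\[
\sup_{A \in \mathcal{F}} |Q_1(A) - Q_0(A)| = \tfrac{1}{2}\, E^{Q_1} |M - 1|.
\]
So the task becomes to show $E^{Q_1}|M-1| \le 2\varepsilon^{1/2}$; in fact I will get the sharper bound $\varepsilon^{1/2}$, which yields the stated result with room to spare.

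Next, I would observe the key normalization $E^{Q_1}(M) = \int M \cdot M^{-1}\,dQ_0 = 1$. Combined with the hypothesis $E^{Q_1}(M^2) \le 1 + \varepsilon$, this gives
\[
E^{Q_1}\bigl[(M-1)^2\bigr] = E^{Q_1}(M^2) - 2 E^{Q_1}(M) + 1 = E^{Q_1}(M^2) - 1 \le \varepsilon.
\]
Then Cauchy--Schwarz (or Jensen) under $Q_1$ gives
\[
E^{Q_1}|M - 1| \le \bigl(E^{Q_1}(M-1)^2\bigr)^{1/2} \le \varepsilon^{1/2},
\]
and substituting into the displayed identity yields $\sup_A |Q_1(A) - Q_0(A)| \le \tfrac{1}{2}\varepsilon^{1/2} \le \varepsilon^{1/2}$.

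There is no real obstacle here; the only thing to be careful about is the factor $\tfrac{1}{2}$ relating the supremum over events to the $L^1$ distance between densities, and verifying the automatic identity $E^{Q_1}(M^2) = E^{Q_0}(M)$ so that the hypothesis is correctly interpreted. The proof is short enough that I would write it out in full rather than sketch it further.
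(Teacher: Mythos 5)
Your proof is correct and takes essentially the same route as the paper: both hinge on the automatic normalization $E^{Q_1}M=1$, reduce the problem to $E^{Q_1}|M-1|$, and then apply Cauchy--Schwarz with $E^{Q_1}(M-1)^2 = E^{Q_1}M^2 - 1 \le \varepsilon$. The only (immaterial) difference is that you invoke the exact total-variation identity with the factor $\tfrac12$, yielding the slightly sharper bound $\tfrac12\varepsilon^{1/2}$, whereas the paper bounds $|Q_1(A)-Q_0(A)| \le E^{Q_1}|M-1|$ directly for each $A$ and drops the factor.
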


\begin{proof}
[Proof of Lemma \ref{LemTV1}]Note that%
\begin{align*}
\left\vert Q_{1}\left(  A\right)  -Q_{0}\left(  A\right)  \right\vert  &
=\left\vert E^{Q_{1}}\left(  1-M;A\right)  \right\vert \\
&  \leq E^{Q_{1}}\left(  \left\vert M-1\right\vert \right)  \leq[E^{Q_{1} }(
M-1) ^{2}]^{1/2}=\left(  E^{Q_{1}}M^{2}-1\right)  ^{1/2} \leq\varepsilon
^{1/2}.
\end{align*}

\end{proof}

Also, it is not hard to verify that by letting $P^{(b)}(\cdot)= P(\cdot
|\tau_{b} <\infty)$ we have
\[
\frac{dP^{(b)}}{dQ}=\frac{Z_{b}}{P(\tau_{b} <\infty)}.
\]
Then, it is sufficient to show that for $\varepsilon$ arbitrarily small there
exists $b$ sufficiently large depending on $\varepsilon$,
\[
E^{Q}Z_{b}^{2}<(1+\varepsilon)u^2(b).
\]

\begin{theorem}
\label{ThmTV} Suppose that Assumption A or B1-B3 hold. For any $\varepsilon
>0$, there exists $\eta_{*}>0$ such that for all $b>\eta_{*}$, there exists a
choice of $p_{\ast}$, $p_{\ast\ast}$, $p_{j}$, $j=1,\dots,k$ such that the
corresponding estimator $L_{b}$ satisfies,
\begin{equation}
E^{Q}Z_{b}^{2}\leq(1+\varepsilon)u^{2}(b). \label{BdTV}%
\end{equation}
Therefore, the importance sampling distribution converges in total variation
to the conditional distribution of the random walk given $\{\tau_{b}<\infty
\}$, as $b\rightarrow\infty$.
\end{theorem}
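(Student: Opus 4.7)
The plan is to revisit the Lyapunov verification of Section \ref{SectionAnalysis} with the mixture parameters retuned so that the leading Lyapunov constant multiplying $G^{2}(b)$ can be pushed arbitrarily close to $1/\mu^{2}$; the Pakes--Veraberbeke estimate $u(b)\sim G(b)/|\mu|$ then closes the loop.

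First, I would keep the same Lyapunov function $g(s)=\min\{\kappa G^{2}(b-s),1\}$ used in Section \ref{SectionAnalysis}, so that Lemma \ref{LemLyap} yields $E^{Q}Z_{b}^{2}\leq g(0)=\kappa G^{2}(b)$ whenever $\kappa G^{2}(b)<1$. Theorem \ref{ThmPV} gives $u^{2}(b)=G^{2}(b)/\mu^{2}\cdot(1+o(1))$, so to obtain (\ref{BdTV}) it suffices to verify the Lyapunov inequality with $\kappa\leq (1+\varepsilon/2)/\mu^{2}$ and then take $b$ large enough for the Pakes--Veraberbeke $o(1)$ to absorb the remaining slack.

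Next, I would re-examine the decomposition derived in Section \ref{SectionAnalysis},
$$\frac{J_{\ast}}{p_{\ast}}+\frac{J_{\ast\ast}}{p_{\ast\ast}}+\sum_{i=1}^{k}\frac{J_{i}}{p_{i}}\leq 1+p_{\ast\ast}\Big[A+\frac{B}{\theta}+\frac{C}{\theta^{2}}\Big],$$
with $A=(1+k\widetilde{\varepsilon}_{1})(1-\delta_{0}^{\ast})^{-1}$, $B=(1-\delta_{0}^{\ast})\mu<0$, and $C=e^{2a_{\ast\ast}}/(4\kappa)+k\delta_{1}^{\ast}\delta_{0}^{\ast}/\widetilde{\varepsilon}_{1}$. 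Rather than fixing $\theta=-\mu(1-\delta_{0}^{\ast})/(1+\delta_{0}^{\ast})^{5}$ (the drift-matching choice used in Section \ref{SectionAnalysis}), I would treat $\theta$ as free and minimize the convex quadratic in $1/\theta$. The minimum is $A-B^{2}/(4C)$, achieved at $\theta^{\ast}=-2C/B>0$. The Lyapunov condition (bracket $\leq 0$) then reduces to the explicit requirement
$$\kappa\;\geq\;\frac{(1+k\widetilde{\varepsilon}_{1})\,e^{2a_{\ast\ast}}}{(1-\delta_{0}^{\ast})^{3}\,\mu^{2}}\cdot\big(1+o(1)\big),$$
where the $o(1)$ corresponds to the $\delta_{1}^{\ast}$-dependent correction in $C$ and can be made arbitrarily small (e.g.\ by taking $\delta_{1}^{\ast}=\widetilde{\varepsilon}_{1}^{2}$). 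For any prescribed $\varepsilon>0$, choose $a_{\ast\ast},\delta_{0}^{\ast},\widetilde{\varepsilon}_{1},\delta_{1}^{\ast}$ all small enough that the right-hand side is at most $(1+\varepsilon/2)/\mu^{2}$, and take $\kappa=(1+\varepsilon/2)/\mu^{2}$ with $\theta=\theta^{\ast}$. Under Assumption A the $J_{i}$ terms vanish; under Assumption B they are polynomially smaller in $b-s$ than the $p_{\ast\ast}$-order terms by Propositions \ref{PropMid}, \ref{PropFirst}, \ref{PropLast}, hence negligible once $b-s\geq\eta_{\ast}$ with $\eta_{\ast}$ large.

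Combining, $E^{Q}Z_{b}^{2}\leq g(0)=(1+\varepsilon/2)G^{2}(b)/\mu^{2}=(1+\varepsilon/2)u^{2}(b)(1+o(1))\leq(1+\varepsilon)u^{2}(b)$ for $b$ sufficiently large, which is (\ref{BdTV}). The total variation conclusion then follows from Lemma \ref{LemTV1} applied to $Q_{0}=Q$, $Q_{1}=P(\cdot|\tau_{b}<\infty)$ via the identity $dP^{(b)}/dQ=Z_{b}/u(b)$ recalled in the discussion preceding the theorem: with $M^{-1}=Z_{b}/u(b)$, one has $E^{Q_{0}}(M)=E^{Q}Z_{b}^{2}/u^{2}(b)\leq 1+\varepsilon$, giving $\sup_{A}|P^{(b)}(A)-Q(A)|\leq\varepsilon^{1/2}\to 0$.

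The main obstacle is conceptual rather than technical: the analysis of Section \ref{SectionAnalysis} used a drift-matching choice $\theta\approx|\mu|$ that yields a bounded but not tight second-moment constant, and under that choice $\kappa$ cannot be driven to $1/\mu^{2}$. The new ingredient is to \emph{optimize} $\theta$ within the bracket; the optimal $\theta^{\ast}$ turns out to be of order $|\mu|/2$ when $\kappa\sim 1/\mu^{2}$, at which point the lower bound on $\kappa$ collapses to $(1+o(1))/\mu^{2}$. One should also check that $p_{\ast\ast}=2\theta^{\ast}\bar F(b-s)/G(b-s)$ remains below the cap $\widetilde{\varepsilon}$, but this is automatic for $b-s$ large since $\bar F/G\to 0$ under both Assumptions A and B.
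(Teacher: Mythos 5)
Your optimization of $\theta$ is a genuinely correct observation — indeed the paper's own proof uses $\theta=-\mu(1-\varepsilon)/2\approx|\mu|/2$ (equation \eqref{theta}), which matches the stationary point $\theta^{\ast}$ of your quadratic — but your argument has a real gap that the paper's construction is specifically designed to avoid.

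The problem is that with the Lyapunov function $g(s)=\min\{\kappa G^{2}(b-s),1\}$, the no-importance-sampling threshold is forced to be
\[
\eta_{\ast}=G^{-1}(\kappa^{-1/2}),
\]
which is a \emph{decreasing} function of $\kappa$. Your plan requires $\kappa\downarrow(1+\varepsilon/2)/\mu^{2}$, a fixed small constant, so $\eta_{\ast}$ shrinks to a fixed, possibly very small value (or is not even well-defined if $\kappa^{-1/2}>G(0)=EX^{+}$). On the other hand, every asymptotic estimate you invoke — Proposition \ref{PropJReg} with error $\delta_{0}^{\ast}$, and Propositions \ref{PropMid}, \ref{PropFirst}, \ref{PropLast} with error $\delta_{1}^{\ast}$ — requires $\eta_{\ast}$ to be made \emph{large} to push those errors to zero, and this requirement becomes more stringent precisely as $\delta_{0}^{\ast},\delta_{1}^{\ast}\to 0$. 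You cannot simultaneously drive $\kappa$ down to $1/\mu^{2}$ and $\eta_{\ast}$ up to infinity under the rigid coupling $\eta_{\ast}=G^{-1}(\kappa^{-1/2})$. You actually acknowledge needing ``$b-s\geq\eta_{\ast}$ with $\eta_{\ast}$ large'' in the last paragraph, which is exactly where the circularity bites. This is not a cosmetic issue: the Lyapunov inequality must hold for all $s$ with $g(s)<1$, including $s$ where $b-s$ is only slightly larger than $\eta_{\ast}$, and at such $s$ the constants in $J_{\ast}$, $J_{i}$ are not under control.

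The missing idea is the state-dependent multiplier $\gamma(s)$ used in the paper's proof. The paper replaces $\kappa$ by the interpolating factor $\gamma(s)=1+5\varepsilon+\kappa s^{1+\varepsilon'}/b^{1+\varepsilon'}$ and sets $g(s)=\min\{1,\mu^{-2}\gamma(s)G^{2}(b-s)\}$. This decouples the two conflicting requirements: at $s=0$ the constant is $\gamma(0)=1+5\varepsilon$, so $g(0)=(1+5\varepsilon)\mu^{-2}G^{2}(b)\approx(1+5\varepsilon)u^{2}(b)$; near $s\approx b$ the constant is $\gamma(s)\approx 1+5\varepsilon+\kappa$, and $\eta_{\ast}=\sup\{b-s:g(s)=1\}\to\infty$ as $\kappa\to\infty$. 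Thus $\kappa$ can be taken as large as needed to validate the asymptotic estimates while $g(0)$ remains tight. The accompanying analysis of the extra term $E\big[\tfrac{G^{2}(s+X)}{G^{2}(s)}(\tfrac{\gamma(s+X)}{\gamma(s)}-1)\big]$, separated into the regimes $s\leq b^{\varepsilon'}$, $b^{\varepsilon'}<s\leq b/2$, and $s>b/2$, is the nontrivial extra work that your constant-$\kappa$ proposal omits. Without some device of this kind your derivation of the $\kappa$-constraint establishes the constraint only for $b-s$ sufficiently large relative to a large $\eta_{\ast}$, not for all $b-s>G^{-1}(\kappa^{-1/2})$ with $\kappa\approx 1/\mu^{2}$, and so the Lyapunov inequality is not actually verified.
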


\begin{proof}
[Proof of Theorem \ref{ThmTV}]Given $\varepsilon,\varepsilon^{\prime}>0$ small, we
consider $\kappa>0$ and functions
\begin{align*}
\gamma(s)  &  =\left\{
\begin{array}
[c]{c}%
1+5\varepsilon+\kappa s^{1+\varepsilon^{\prime}}/b^{1+\varepsilon^{\prime}%
},s>0\\
1+5\varepsilon,s\leq0
\end{array}
\right.  \quad\\
g(s)  &  =\min\{1,\mu^{-2}\gamma(s)G^{2}(b-s)\}.
\end{align*}
Let $\eta_{\ast}=\sup\{b-s:g(s)=1\}.$ We can easily see that $\eta_{\ast
}\rightarrow\infty$ as $\kappa\rightarrow\infty.$ Also,
\[
1+5\varepsilon\leq\gamma(s)\leq\kappa+1+5\varepsilon,
\]
for all $s\leq b$. We proceed with a similar development as in the previous
section. We adopt the same notation as in (\ref{J*}), (\ref{J**}), (\ref{Ji}),
and (\ref{Jk}). Since $\gamma(s)$ is bounded, results as in Propositions
\ref{PropMid}, \ref{PropFirst}, and \ref{PropLast} still hold. In addition, we
can choose $a_{\ast\ast}$ small enough such that
\[
J_{\ast\ast}\leq\frac{P^{2}(X>\Lambda^{-1}(\Lambda(b-s)-a_{\ast\ast}%
))}{p_{\ast\ast}g(s)}\leq(1+\varepsilon)\frac{\bar{F}^{2}(b-s)}{p_{\ast\ast
}g(s)}.
\]
There is one last term, namely $J_{\ast}$. Note that
\[
\frac{g(s+X)}{g(s)}=\frac{G^{2}(s+X)}{G^{2}(s)}+\frac{G^{2}(s+X)}{G^{2}%
(s)}\left(  \frac{\gamma(s+X)}{\gamma(s)}-1\right)  .
\]
According to the proof of Proposition \ref{PropJReg} (more specifically
(\ref{DCGT})),
\[
E\left(  \frac{G^{2}(s+X)}{G^{2}(s)};X\leq b-s-\Lambda^{-1}(\Lambda
(b-s)-a_{\ast})\right)  \leq1+(2\mu+o(1))\bar{F}(b-s)/G(b-s).
\]
as $b-s\longrightarrow\infty$. Now, we consider the term
\[
E\left(  \frac{G^{2}(s+X)}{G^{2}(s)}\left(  \frac{\gamma(s+X)}{\gamma
(s)}-1\right)  ;X\leq b-s-\Lambda^{-1}(\Lambda(b-s)-a_{\ast})\right)  .
\]
For all $b\geq s>b^{\varepsilon^{\prime}}$ and $s+X>0$,
\[
\frac{G^{2}(s+X)}{G^{2}(s)}\left(  \frac{\gamma(s+X)}{\gamma(s)}-1\right)
=\kappa\gamma^{-1}(s)s^{1+\varepsilon^{\prime}}b^{-1-\varepsilon^{\prime}%
}\left(  \left(  1+X/s\right)  ^{1+\varepsilon^{\prime}}-1\right)  \frac
{G^{2}(s+X)}{G^{2}(s)}.
\]
Therefore, for $b\geq s>b^{\varepsilon^{\prime}}$, by dominated convergence,
\[
\gamma(s)E\left(  \frac{b^{1+\varepsilon^{\prime}}}{s^{\varepsilon^{\prime}}%
}\frac{G^{2}(s+X)}{G^{2}(s)}\left(  \frac{\gamma(s+X)}{\gamma(s)}-1\right)
;X\leq b-s-\Lambda^{-1}(\Lambda(b-s)-a_{\ast})\right)  \rightarrow
\kappa(1+\varepsilon^{\prime})\mu,
\]
as $b-s\rightarrow\infty$. For $s\leq b^{\varepsilon^{\prime}}$,
\begin{align*}
&  E\left(  \frac{G^{2}(s+X)}{G^{2}(s)}\left(  \frac{\gamma(s+X)}{\gamma
(s)}-1\right)  ;X\leq b-s-\Lambda^{-1}(\Lambda(b-s)-a_{\ast})\right) \\
&  =O(b^{-1-\varepsilon^{\prime}+\varepsilon^{\prime2}})=o(\bar{F}%
(b-s)/G(b-s))
\end{align*}
as $b\nearrow\infty$ uniformly over $s\leq b^{\varepsilon^{\prime}}$.
Consequently, it follows that
\[
E\left(  \frac{g(s+X)}{g(s)};X\leq b-s-\Lambda^{-1}(\Lambda(b-s)-a_{\ast
})\right)  \leq1+(2\mu+o(1))\bar{F}(b-s)/G(b-s),
\]
as $b-s\rightarrow\infty$. We choose,
\[
p_{\ast\ast}=\min\{\varepsilon,-(1-\varepsilon)\mu\bar{F}(b-s)/G(b-s)\},\quad
p_{j}=\varepsilon^{2}p_{\ast\ast}.
\]
To be consistent with the previous notations, we let
\begin{equation}\label{theta}
\theta = -\frac{\mu(1-\varepsilon)}{2}.
\end{equation}
Then,
\begin{align*}
E\left[  \frac{g(s+X)}{g(s)}r_{s}(X)\right]   &  \leq\left(  1+(1-\varepsilon
+o(\varepsilon))\mu\frac{\bar{F}(b-s)}{G(b-s)}\right)  ^{-1}\left[
1+(2\mu+o(1))\frac{\bar{F}(b-s)}{G(b-s)}\right] \\
&  +o(1)k\varepsilon^{-2}\bar{F}(b-s)/G(b-s)-(1+\varepsilon)\frac{\mu\bar
{F}(b-s)}{\gamma(s)G(b-s)(1-\varepsilon)}.
\end{align*}
When $s\leq b/2$,
\begin{align*}
E\left[  \frac{g(s+X)}{g(s)}r_{s}(X)\right]   &  \leq1-(1+o(\varepsilon
))\mu\frac{\bar{F}(b-s)}{G(b-s)}+(2\mu+o(1))\frac{\bar{F}(b-s)}{G(b-s)}\\
&  +o(1)k\varepsilon^{-2}\frac{\bar{F}(b-s)}{G(b-s)}-(1+3\varepsilon)\frac
{\mu\bar{F}(b-s)}{\gamma(s)G(b-s)}.
\end{align*}
Because $\gamma(s)\geq1+5\varepsilon$, for $b$ large enough, $E\left[
\frac{g(s+X)}{g(s)}r_{s}(X)\right]  \leq1$, when $s\leq b/2$. For $s\geq
b/2$,
\[
\gamma(s)\geq\kappa/4.
\]
Then
\begin{align*}
E\left[  \frac{g(s+X)}{g(s)}L(X)\right]   &  \leq1-(1+o(\varepsilon))\mu
\frac{\bar{F}(b-s)}{G(b-s)}+(2\mu+o(1))\frac{\bar{F}(b-s)}{G(b-s)}\\
&  +o(1)k\varepsilon^{-2}\frac{\bar{F}(b-s)}{G(b-s)}-\frac{4(1+3\varepsilon
)}{\kappa}\frac{\mu\bar{F}(b-s)}{G(b-s)}.
\end{align*}
For any $\varepsilon>0$ one can always choose $\kappa$ large enough such that
$E\left[  \frac{g(s+X)}{g(s)}r_{s}(X)\right]  \leq1$ when $s\geq b/2$ and
$g(s)<1$. Therefore,
\[
E^{Q}L^{2}\leq g(0)=(1+5\varepsilon)\mu^{-2}G(b)^{2},
\]
for $b$ large enough. The conclusion then follows from Lemma \ref{LemTV1} and Theorem \ref{ThmPV}.
\end{proof}

\begin{proof}
[Proof of Theorem \ref{ThmTVA}]The conclusion is a direct application of Lemma
\ref{LemTV1} and Theorem \ref{ThmTV}.
\end{proof}

\bigskip

Here we emphasize that the choices of parameters of the mixture family in the
current section are different from those in Section \ref{SecTermination}.
Especially for the regularly varying case with $\iota\in(1.5,2)$, in order to
have finite expected termination, we will have the importance sampling
distribution deviate from the zero-variance change of measure.

\subsection{Conditional central limit theorem\label{SectCLT}}

The goal of this section is to provide a functional approximation to the joint
distribution of
\[
\left\{  \left(  \tau_{b},S_{\left\lfloor u\tau_{b}\right\rfloor },S_{\tau
_{b}}\right)  :u\in\lbrack0,1)\right\}  ,
\]
conditional on $\{\tau_{b}<\infty\}$ as $b\rightarrow\infty$. To make the
discussion smooth, we postpone some technical proofs to Appendix
\ref{ApdTotal}.

For all the theorems so far, we assume either Assumption A or Assumptions
B1-B3. In this section, in the setting of Assumption B, we will further impose
Assumption B4.


The approximation will be obtained based on a coupling of two processes
governed according to a probability measure which shall be denoted by
$Q^{\ast}$. Our importance sampling
distribution induces a process that behaves most of the time like a regular
random walk, except that occasional large jumps occur with
probability $p_{**}$. We will couple this process with a regular random
walk and argue that with high probability as $b\nearrow\infty$ we have that
$\tau_{b}$ coincides precisely with the first of such large jumps.

We now proceed to formalize this intuition. Consider the process $\hat
{S}=\{\hat{S}_{n}:n\geq0\}$, where $\hat{S}_{n}=\hat{X}_{1}+...+\hat{X}_{n}$,
$\hat{S}_{0}=0$, and we have that%
\begin{equation}
Q^{\ast}(\hat{X}_{n+1}\in dx|\hat{S}_{n}=s) \triangleq q_{s}\left(  x\right)
dx=r_{s}^{-1}(x)f(x)dx. \label{Dxtil1}%
\end{equation}
The function $r_{s}^{-1}(x)$ is chosen to satisfy the conditions of Theorem
\ref{ThmTV}. We shall slightly abuse notation by letting $\tau_{b}%
=\inf\{n:\hat{S}_{n}>b\}$.

We further introduce a random walk $\tilde{S}=\{\tilde{S}_{n}:n\geq1\}$ such
that $\tilde{S}_{n}=\tilde{X}_{1}+...+\tilde{X}_{n}$ and with the property
that the $\tilde{X}_{i}$'s are i.i.d. under $Q^{\ast}$ and have density
\begin{equation}
Q^{\ast}(\tilde{X}_{i}\in dx)=f(x)dx. \label{DXtil}%
\end{equation}

The joint law of $\hat{S}$ and $\tilde{S}$ will be described next.

We first define
\begin{equation}
p(s)=\frac{p_{\ast}I\left(  b-s>\eta_{\ast}\right)  }{P(X\leq b-s-\Lambda
^{-1}(\Lambda(b-s)-a_{\ast}))}+I\left(  b-s\leq\eta_{\ast}\right).
\label{DefpsD}%
\end{equation}
Note that by possibly increasing the selection of $\kappa$ and  $\eta_{\ast}=\sup\{b-s:g(s)=1\}$ in Theorem \ref{ThmTV}, we can always
guarantee that $p\left(  s\right)  \in\lbrack 0,1]$. Actually $p(s)\rightarrow 1$ as $b-s \rightarrow \infty$.
Next define
\begin{equation}
q_{s}^{\ast}(x)=I\left(  p\left(  s\right)  <1\right)  (1-p(s))^{-1}%
(q_{s}(x)-p(s)f(x)). \label{Mix}%
\end{equation}
The next lemma shows that $q_{s}^{\ast}(\cdot)$ is a density function and
provides a decomposition of $q_{s}\left(  x\right)  $ that will allow us to
describe the joint law of $\hat{S}$ and $\tilde{S}$. The proof of the lemma is
given in Appendix \ref{ApdTotal}.

\begin{lemma}
\label{LemDensDec}If $p\left(  s\right)  <1$ we have that $q_{s}^{\ast}%
(\cdot)$ is a density function provided that $\kappa$ (and therefore
$\eta_{\ast}$) are chosen large enough. We thus have the mixture decomposition%
\begin{equation}
q_{s}(x)=p(s)f(x)+(1-p(s))q_{s}^{\ast}(x). \label{decomp}%
\end{equation}

\end{lemma}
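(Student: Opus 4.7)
The plan is to prove Lemma \ref{LemDensDec} by reducing it to a pointwise non-negativity check, since the decomposition \eqref{decomp} is just algebraic rearrangement of \eqref{Mix} and the normalization $\int q_s^\ast(x)\,dx = 1$ follows immediately from $\int q_s = \int f = 1$. Thus the only real content is showing $q_s(x) \geq p(s) f(x)$ for every $x\in\mathbb{R}$ whenever $p(s) < 1$, which by the definition of $p(s)$ forces $b-s > \eta_\ast$, so the mixture form of $q_s$ is in force.

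First I will decompose the real line into the disjoint supports of the mixture components: $(-\infty, c_0]$, $(c_{j-1}, c_j]$ for $1 \le j \le k-1$, $(c_{k-1}, c_k]$, and $(c_k,\infty)$. On $(-\infty, c_0]$ the mixture reduces to $q_s(x) = p_\ast f_\ast(x|s) = \frac{p_\ast}{P(X\le c_0)} f(x) = p(s) f(x)$, giving equality and forcing $q_s^\ast$ to vanish there. For $x > c_k$ we have $q_s(x) = \frac{p_{\ast\ast}}{P(X>c_k)} f(x)$, so the required inequality becomes $\frac{p_{\ast\ast}}{P(X>c_k)} \ge \frac{p_\ast}{P(X\le c_0)}$. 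Since $P(X>c_k) = e^{-\Lambda(b-s)+a_{\ast\ast}}$ and $p_{\ast\ast}$ is of order $\bar F(b-s)/G(b-s) = \lambda(b-s)(1+o(1))$ by Lemma \ref{LemIntTail1}, the left side grows like $e^{a_{\ast\ast}}/G(b-s) \to \infty$, dominating $p(s)\le 1/P(X\le c_0) \to 1$, for all $b-s$ large enough.

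The interior pieces $(c_{j-1}, c_j]$ for $1 \le j \le k-1$ require the bound $\frac{p_j}{P(X\in(c_{j-1},c_j])} \ge p(s)$. Here $p_j = \widetilde\varepsilon_1 p_{\ast\ast}$ is only polynomially small, while under Assumption B the probability $P(X\in(c_{j-1},c_j]) \le \bar F(c_{j-1}) = e^{-\Lambda(a_{j-1}(b-s))}$ decays super-polynomially by Assumption B1 combined with Lemma \ref{LemCen} (which yields $\Lambda(a_{j-1}(b-s)) \ge a_{j-1}^{\beta_0}\Lambda(b-s)$); so the ratio grows to infinity. Under Assumption A there are no interior pieces (since $k=0$ by \eqref{k0}), so this case is vacuous. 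The last piece $(c_{k-1},c_k]$ is the delicate one and is where I expect the main technical work: the inequality becomes $\frac{p_k f(b-s-x)}{f(x) P(X\in(b-s-c_k,b-s-c_{k-1}])} \ge p(s)$, which involves the ``flip" density $f(b-s-x)$. I will bound $f(b-s-x)/f(x) \ge e^{\Lambda(x)-\Lambda(b-s-x)} \lambda(b-s-x)/\lambda(x)$ and use Lemma \ref{LemWeibull} (specifically that $a_{k-1}^{\beta_0} + (1-a_{k-1})^{\beta_0} - 1$ does not collapse to zero, together with $a_{k-1} \ge 1-\sigma_1$) to show $\Lambda(x) + \Lambda(b-s-x) - \Lambda(b-s) \ge 0$ on this range, ensuring the required exponential growth.

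Finally, having verified pointwise non-negativity on all five regions, I conclude $q_s(x) - p(s) f(x) \ge 0$ everywhere, hence $q_s^\ast \ge 0$; the normalization $\int q_s^\ast = 1$ was already noted, so $q_s^\ast$ is a valid density and the decomposition \eqref{decomp} holds. The main obstacle is the asymmetric last piece, because unlike the other components $f_k$ reflects the argument through $b-s$, and verifying the inequality requires simultaneously using Lemma \ref{LemOvershoot} (to control $b-s-c_k \ge (b-s)^{(1-\beta_0)/2}$), Lemma \ref{LemWeibull} (to control the cumulative hazard), and the slow decay of $\lambda$ from Lemma \ref{LemRateFun}; all of these are in place provided $\kappa$ (hence $\eta_\ast$) is chosen sufficiently large, which is precisely the hypothesis of the lemma.
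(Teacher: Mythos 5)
Your overall strategy is exactly the paper's: the decomposition \eqref{decomp} is algebra, $\int q_s^\ast = 1$ is automatic, and the real content is the pointwise inequality $q_s(x)\ge p(s)f(x)$, which you correctly reduce to a piece-by-piece check over the disjoint supports of the mixture components, with the last (flipped) interval $(c_{k-1},c_k]$ being the delicate one. Your treatment of $(-\infty,c_0]$, of $(c_k,\infty)$, and of the middle intervals (modulo a minor sign slip $e^{a_{\ast\ast}}$ vs.\ $e^{-a_{\ast\ast}}$ and the fact that $c_0$ is sublinear rather than of the form $a_0(b-s)$, so one must fall back on Lemma \ref{LemOvershoot} rather than Lemma \ref{LemCen} for $j=1$) lines up with the paper.

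The one place you go off-track is the flip piece $(c_{k-1},c_k]$. The inequality you propose to establish, $\Lambda(x)+\Lambda(b-s-x)-\Lambda(b-s)\ge 0$, is not what Lemma \ref{LemWeibull} delivers (that lemma gives a strict margin $a_{j}^{\beta_0}+(1-a_{j+1})^{\beta_0}\ge 1+\sigma_2$, which is the engine of the \emph{second-moment} bounds in Propositions \ref{PropMid}--\ref{PropLast}, not of this density check), and more to the point, it is not the exponent that actually arises when one writes out $p_k f(b-s-x)\big/\big[f(x)\,P(X\in(b-s-c_k,b-s-c_{k-1}])\big]$. The paper's argument for this piece is much lighter: for $x\in(c_{k-1},c_k]$ and $b-s$ large we have $b-s-x\le x$, so by the eventual monotonicity of $f$ (a consequence of B3) one gets $f(b-s-x)\ge f(x)$ directly; the remaining comparison is then simply polynomial decay of $p_k$ against the super-polynomial decay of $P(X\in(b-s-c_k,b-s-c_{k-1}])\le \bar F(b-s-c_k)$, with $b-s-c_k\ge (b-s)^{(1-\beta_0)/2}$ from Lemma \ref{LemOvershoot} and B1 giving the super-polynomial rate. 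Your decomposition $f(b-s-x)/f(x)=\big(\lambda(b-s-x)/\lambda(x)\big)e^{\Lambda(x)-\Lambda(b-s-x)}$ would lead to the same conclusion if you argued $\Lambda(x)\ge\Lambda(b-s-x)$ and $\lambda(b-s-x)\ge\lambda(x)$ (both from $b-s-x\le x$ plus monotonicity), which is just the observation $f(b-s-x)\ge f(x)$ unpacked; the detour through Lemma \ref{LemWeibull} and the stated superadditivity inequality is a dead end.
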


\bigskip

The processes $\hat{S}$ and $\tilde{S}$ evolve jointly as follows under
$Q^{\ast}$. First simply let $\tilde{S}$ evolve according to (\ref{DXtil}).
Now, at any given time $n+1$ the evolution of $\tilde{S}$ obeys the following
rule. Given that $\hat{S}_{n}=s$, $\hat{X}_{n+1}$ is constructed as follows.
First, we sample a Bernoulli random variable to choose among $f(\cdot)$ and
$q_{s}^{\ast}(\cdot)$ according to the probabilities $p(s)$ and $1-p(s)$
respectively. If $f(\cdot)$ has been chosen, we let $\hat{X}_{n+1}=\tilde
{X}_{n+1}$. Otherwise, we construct $\hat{X}_{n+1}$ from the $q_{s}^{\ast
}(\cdot)$ and $\tilde{X}_{n+1}$ from $f(x)$ independently. We further let
\[
N_{b}=\inf\{n\geq1:\tilde{X}_{n}\not =\hat{X}_{n}\},
\]
which is the first time that $f(x)$ is not chosen. We intend to show that
$P(N_{b}=\tau_{b})\rightarrow1$ as $b\rightarrow\infty$. The result is
summarized in the following lemmas and propositions whose proofs are given in
Appendix \ref{ApdTotal}.

\begin{lemma}
\label{LemN}%
\[
\lim_{b\rightarrow\infty}Q^{\ast}(N_{b}<\infty)=1.
\]

\end{lemma}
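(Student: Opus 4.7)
The plan is to prove the stronger statement that for every sufficiently large $b$, $Q^{\ast}(N_{b}<\infty)=1$, from which the limit follows trivially. The key observation is that by the construction of the joint law under $Q^{\ast}$, at each step $n+1$, conditional on $\hat S_n = s$, a Bernoulli trial with success probability $p(\hat S_n)$ is performed, and $N_b$ is the first failure time. On $\{N_b>n\}$ one has $\hat S_j=\tilde S_j$ for all $j\leq n$, so by conditioning on the entire trajectory of $\tilde S$ I obtain
\[
Q^{\ast}(N_b=\infty \mid \tilde S)=\prod_{n=0}^{\infty} p(\tilde S_n),
\]
which vanishes almost surely as soon as $\sum_{n=0}^{\infty}(1-p(\tilde S_n))=\infty$ almost surely.

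To establish the latter, I first need a quantitative lower bound on $1-p(s)$ for $b-s$ large. Using \eqref{DefpsD} together with $p_{\ast}=1-(1+k\tilde\varepsilon_1)p_{\ast\ast}$ and writing $c_0=b-s-\Lambda^{-1}(\Lambda(b-s)-a_{\ast})$, a direct computation gives
\[
1-p(s)=\frac{(1+k\tilde\varepsilon_1)p_{\ast\ast}-P(X>c_0)}{1-P(X>c_0)}.
\]
In the parameterization of Section \ref{SecTV}, $p_{\ast\ast}$ is asymptotic to a positive multiple of $\bar F(b-s)/G(b-s)$, which by Lemma \ref{LemIntTail1} is at least of order $1/(b-s)$. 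On the other hand, under Assumption A we have by \eqref{Reg} that $P(X>c_0)=e^{a_{\ast\ast}}\bar F(b-s)$, which is $o(1/(b-s))$ since $\bar F$ is regularly varying with index $\iota>1$; under Assumption B, Lemma \ref{LemOvershoot} yields $c_0\geq(b-s)^{(1-\beta_0)/2}$, and Assumption B1 ($x\lambda(x)\to\infty$) forces $\Lambda(c_0)\geq M\log(b-s)$ for arbitrarily large $M$, so $P(X>c_0)=e^{-\Lambda(c_0)}$ decays faster than any polynomial in $b-s$. In either case $p_{\ast\ast}$ dominates $P(X>c_0)$, and hence $1-p(s)\geq c/(b-s)$ for some $c>0$ and all $b-s$ sufficiently large.

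The final step is routine. Since $E\tilde X_1=\mu<0$, the SLLN yields $\tilde S_n/n\to\mu$ almost surely, so there exists a random $K_0<\infty$ with $b-\tilde S_n\leq 2|\mu|n$ for all $n\geq K_0$. Combining with the bound above, $1-p(\tilde S_n)\geq c''/n$ for all $n\geq K_0$, whence $\sum_n(1-p(\tilde S_n))=\infty$ almost surely, giving $Q^{\ast}(N_b<\infty)=1$. The only mildly delicate point in the argument is the algebraic verification that $p_{\ast\ast}$ dominates $P(X>c_0)$; this is precisely where the tail-integrability hypotheses (via $\iota>1$ under Assumption A and via $x\lambda(x)\to\infty$ under Assumption B) enter.
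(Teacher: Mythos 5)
Your proof is correct, and it takes a genuinely different (and arguably cleaner) route than the paper's. The paper bounds $Q^{\ast}(N_b>kb)=E^{Q^{\ast}}[\prod_{j=0}^{\lceil kb\rceil}p(\tilde S_j)]$ by splitting on the event $\{|\tilde S_j-\mu j|\le\varepsilon\max\{j,b\}\text{ for all }j\le kb\}$; on the good event the product is shown to be $O(k^{-\varepsilon_0})$ (using the asymptotic $1-p(s)=(1+o(1))p_{\ast\ast}$, just as you do), and the bad event has vanishing probability as $b\to\infty$; one then lets $k\to\infty$ to conclude the double limit. You instead condition on the full trajectory of $\tilde S$, write $Q^{\ast}(N_b=\infty\mid\tilde S)=\prod_{n\ge0}p(\tilde S_n)$, and show the sum $\sum_n(1-p(\tilde S_n))$ diverges $Q^{\ast}$-a.s. via the SLLN and the pointwise bound $1-p(s)\geq c/(b-s)$, whose verification (that $p_{\ast\ast}$ dominates $P(X>c_0)$, both under Assumption A via regular variation and under Assumption B via Lemmas \ref{LemOvershoot} and B1) is the genuinely new piece of work in your write-up and is carried out correctly. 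Your approach buys a strictly stronger conclusion --- $Q^{\ast}(N_b<\infty)=1$ exactly, for each sufficiently large $b$, rather than only in the $b\to\infty$ limit --- and avoids the paper's somewhat delicate interchange of the $k\to\infty$ and $b\to\infty$ limits. One small point worth making explicit: to apply the bound $1-p(\tilde S_n)\ge c/(b-\tilde S_n)$ for $n\ge K_0$ you need not only $b-\tilde S_n\le 2|\mu|n$ (which you state) but also $b-\tilde S_n$ large enough for the asymptotic dominance to kick in; both hold simultaneously for $n$ beyond a random threshold since $\tilde S_n\to-\infty$ a.s., so the gap is purely expository.
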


\begin{lemma}
\label{LemJump} Let $\varepsilon$ be chosen as in Theorem \ref{ThmTV}. There
exists $b_{0}>0$ (depending on $a_{**}$ and $\varepsilon$) and $\gamma(a_{\ast\ast},\varepsilon)>0$ such that
$\gamma(a_{\ast\ast},\varepsilon)\rightarrow0$ as $a_{\ast\ast}\rightarrow0$
and $\varepsilon\rightarrow0$, satisfying that%
\[
Q^{\ast}(\tau_{b}=N_{b})\geq1-\gamma(a_{\ast\ast},\varepsilon),
\]
for all $b>b_{0},$ where $\tau_{b}=\inf\{n\geq1:\hat{S}_{n}\geq b\}$.
\end{lemma}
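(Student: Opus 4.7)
The plan is to decompose the bad event $\{\tau_b \neq N_b\}$ into two pieces that can be controlled by different mechanisms: the auxiliary walk $\tilde S$ itself crossing $b$ before the coupling breaks (a regular random walk tail event, small for large $b$), and the coupling breaking at a state $s$ where the replacement jump $\hat X_{N_b} \sim q_s^*$ nonetheless falls short of $b-s$ (a tail event for the large-jump mixture component, small when $a_{**}$ is small).

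First I would establish the inclusion
\[
\{\tau_b \neq N_b\} \subseteq \{\tilde\tau_b < \infty\} \cup \{N_b < \infty,\ \hat X_{N_b} \leq b - \tilde S_{N_b-1}\},
\]
where $\tilde\tau_b = \inf\{n : \tilde S_n > b\}$. This follows because $\hat S_i = \tilde S_i$ for $i < N_b$, so any discrepancy between $\tau_b$ and $N_b$ forces either $\tilde S$ to cross $b$ before $N_b$, or the coupling never to break while $\tilde S$ still crosses, or the break to occur at a state from which $\hat X_{N_b}$ does not overshoot. Since $\tilde S$ is a random walk with increment density $f$ under $Q^*$, its first-passage probability equals $u(b)$, which tends to $0$ by Pakes--Veraberbeke. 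Thus the first term is at most $u(b)$ and can be made arbitrarily small by enlarging $b_0$ (depending on the desired tolerance, hence on $a_{**}$ and $\varepsilon$).

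For the second term, given $N_b = n$ and $\tilde S_{n-1} = s$ (necessarily with $b-s > \eta_*$, since $p(s) = 1$ otherwise), the replacement jump has density $q_s^*$ independently of $\tilde S_{n-1}$ and
\[
Q^*\bigl(\hat X_{N_b} > b - s \mid N_b = n,\ \tilde S_{n-1} = s\bigr) = \frac{\int_{b-s}^\infty q_s(x)\,dx - p(s)\bar F(b-s)}{1 - p(s)}.
\]
Among the components of $q_s$, only the large-jump piece $p_{**} f_{**}$ places mass above $b-s$, contributing exactly $p_{**} e^{-a_{**}}$. Using $p_{**}$ of order $\bar F(b-s)/G(b-s)$, $p_* = 1 - p_{**}(1 + O(\varepsilon^2))$, $P(X > c_0) = o(p_{**})$ (by direct computation under Assumption A and via Lemma \ref{LemOvershoot} under Assumption B), and $p(s) \to 1$, the numerator equals $p_{**}(e^{-a_{**}} + o(1))$ while the denominator equals $p_{**}(1 + O(\varepsilon^2) + o(1))$, giving
\[
\int_{b-s}^\infty q_s^*(x)\,dx = e^{-a_{**}}(1 + o(1))
\]
uniformly for $b-s > \eta_*$, with the $o(1)$ shrinking as $\eta_* \to \infty$.

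Averaging over $n$ and $s$ therefore bounds the second term by $1 - e^{-a_{**}} + \delta(\eta_*)$ with $\delta(\eta_*) \to 0$, so
\[
Q^*(\tau_b \neq N_b) \leq u(b) + (1 - e^{-a_{**}}) + \delta(\eta_*) =: \gamma(a_{**}, \varepsilon),
\]
which vanishes as $a_{**}, \varepsilon \to 0$ once $\eta_*$ (driven by $\kappa$) is chosen large enough. The main obstacle is the uniform asymptotic $\int_{b-s}^\infty q_s^*(x)\,dx = e^{-a_{**}}(1 + o(1))$: numerator and denominator are both of order $p_{**}$ and the cancellation must be tracked carefully, with the relevant vanishing rates for $\bar F$, $G$, and $P(X > c_0)$ differing between the regularly varying and Weibullian regimes and requiring separate book-keeping under Assumptions A and B.
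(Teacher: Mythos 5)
Your proof is correct and rests on the same two pillars as the paper's: the Pakes--Veraberbeke estimate $u(b)\to 0$ and the computation that, for $b-s>\eta_*$, the replacement jump overshoots with probability $e^{-a_{**}}(1+o(1))$ (the paper's $R(s)=e^{-a_{**}}+O(\varepsilon)$). The paper bounds the good event $Q^*(\tau_b=N_b)$ from below, restricting to $\tau'_{b-\eta_*}>N_b-1$ and then invoking Lemma \ref{LemN} to discard the residual event $\{N_b=\infty\}$; you instead union-bound the bad event $Q^*(\tau_b\neq N_b)$, observing that the coupling can only break from a state with $b-s>\eta_*$ (since $p(s)=1$ otherwise), which makes the $R(s)$ bound apply automatically and, as a side benefit, renders Lemma \ref{LemN} unnecessary. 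You also supply the book-keeping behind $\int_{b-s}^\infty q_s^*(x)\,dx=e^{-a_{**}}(1+o(1))$ — that the numerator $\int_{b-s}^\infty q_s(x)\,dx-p(s)\bar F(b-s)$ and denominator $1-p(s)$ are both $p_{**}(1+o(1))$ up to $O(\varepsilon)$ corrections, using $\bar F(c_0)=o(p_{**})$ — which the paper asserts without detail. The conclusions match, and the approach is essentially the paper's argument reformulated as its contrapositive.
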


Now, we are ready to present the result which uses $\tilde{S}$ to approximate
the process $\hat{S}$ up to time $\tau_{b}$.

\begin{proposition}
\label{PropCoup} There exists a family of sets $(B_{b}:b>0)$ such that
$P(B_{b})\rightarrow1$ as $b\rightarrow\infty$ and with the property that for
all $\tilde{S}\in B_{b}$%
\[
Q^{\ast}(N_{b}>ta(b)|\tilde{S})=P(Z_{\theta}>t|\mu|)(1+o(1)),
\]
as $b\rightarrow\infty$, where $a(x)=G(x)/\bar{F}(x)$ and $\theta$ is defined in \eqref{theta}.

\begin{itemize}
\item Under Assumption A,
\[
P(Z_{\theta}>t)=\left(  1+\frac{t}{\iota-1}\right)  ^{-\frac{2\theta(\iota
-1)}{|\mu|}},
\]
for all $t\geq0$.

\item Under Assumptions B1-4,
\[
P(Z_{\theta}>t)=e^{-\frac{2\theta t}{|\mu|}}.
\]

\end{itemize}
\end{proposition}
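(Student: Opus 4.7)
My plan is to exploit the Markov structure of the coupling. Conditional on the entire trajectory $\tilde{S}$, the indicators $(I(\tilde{X}_i = \hat{X}_i))_{i\geq 1}$ are independent Bernoulli with success probabilities $p(\tilde{S}_{i-1})$, so $N_b$ is the first failure time and
\[
Q^{\ast}(N_b > n \mid \tilde{S}) = \prod_{i=1}^{n} p(\tilde{S}_{i-1}).
\]
Taking logarithms and using $\log(1-x) = -x + O(x^2)$ together with the fact that $1 - p(s) \to 0$ as $b-s \to \infty$, the analysis reduces to controlling $\sum_{i=1}^{n}(1 - p(\tilde{S}_{i-1}))$. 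From the definition of $p(s)$ in \eqref{DefpsD} and the parameter choices used in the proof of Theorem~\ref{ThmTV}, one checks directly that
\[
1 - p(s) = \frac{2\theta\,\bar{F}(b-s)}{G(b-s)}\,(1+o(1)) \qquad \text{as } b-s \to \infty,
\]
because $p_{\ast\ast}$ dominates all other mixture probabilities by orders of $\varepsilon$ and $P(X > c_0)$ decays faster than $p_{\ast\ast}$ under either Assumption A (by regular variation) or Assumption B (by Lemma~\ref{LemOvershoot}).

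Next, I would define the good set $B_b$ so as to encode the fluid behaviour of $\tilde{S}$ up to the relevant horizon. Specifically,
\[
B_b = \Big\{\sup_{n \leq T(b)}\big|\tilde{S}_n - n\mu\big| \leq \delta_b\Big\} \cap \Big\{\tilde{S}_n \leq b/2 \text{ for all } n \leq T(b)\Big\},
\]
with $T(b) = 2t\,a(b)$ and $\delta_b = o(a(b))$ chosen appropriately. The strong law of large numbers for i.i.d.\ increments with finite mean, available under either Assumption A with $\iota > 1$ or Assumption B, gives $Q^\ast(B_b) \to 1$. On $B_b$, the substitution $\tilde{S}_{i-1} \mapsto (i-1)\mu$ introduces only a negligible multiplicative error in each summand, and a Riemann-sum approximation yields
\[
\sum_{i=1}^{\lfloor t\,a(b)\rfloor}\frac{\bar{F}(b-\tilde{S}_{i-1})}{G(b-\tilde{S}_{i-1})} = \frac{1}{|\mu|}\int_{b}^{b+t|\mu|\,a(b)}\frac{\bar{F}(v)}{G(v)}\,dv + o(1).
\]
Since $\bar{F}/G = -(\log G)'$, the integral equals $\log[G(b)/G(b+t|\mu|a(b))]$. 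Under Assumption A, regular variation of $G$ with index $-(\iota-1)$ together with $a(b) \sim b/(\iota-1)$ gives $G(b)/G(b+t|\mu|a(b)) \to (1 + t|\mu|/(\iota-1))^{\iota-1}$. Under Assumptions B1-B4, $a(b) \sim 1/\lambda(b)$, and Assumption B4 implies $\Lambda(b+t|\mu|/\lambda(b)) - \Lambda(b) \to t|\mu|$, so $\log[G(b)/G(b+t|\mu|a(b))] \to t|\mu|$ after using that $G(x)\lambda(x)/\bar{F}(x) \to 1$.

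Combining these steps gives $\log Q^{\ast}(N_b > t\,a(b)\mid\tilde{S}) \to -(2\theta/|\mu|)\log[G(b)/G(b+t|\mu|a(b))]$ on $B_b$, recovering the two stated limits. The main obstacles are (i) justifying the Riemann-sum-to-integral approximation uniformly on $B_b$, for which regular variation handles Assumption A cleanly, while Assumption B4 is essential under Assumption B in order to prevent $\bar{F}/G \approx \lambda$ from oscillating across the $O(\delta_b)$ perturbations of $s$ caused by the fluctuations of $\tilde{S}_n$ around $n\mu$; and (ii) handling the quadratic remainder $\sum_i (1 - p(\tilde{S}_{i-1}))^2$ from the Taylor expansion, which is $o(1)$ on $B_b$ because $1 - p(\tilde{S}_{i-1}) \leq 2 p_{\ast\ast}(\tilde{S}_{i-1}) \to 0$ uniformly in $i \leq T(b)$ while $\sum_i p_{\ast\ast}(\tilde{S}_{i-1}) = O(1)$.
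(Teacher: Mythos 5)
Your proposal is correct and follows essentially the same route as the paper's proof: writing $Q^{\ast}(N_b>n\mid\tilde S)$ as the product $\prod_{i\le n}p(\tilde S_{i-1})$, identifying $1-p(s)=(1+o(1))\,2\theta\,\bar F(b-s)/G(b-s)$, restricting to a fluid set $B_b$ on which $\tilde S_j\approx j\mu$, and then evaluating the resulting sum via Karamata's theorem (Assumption A) or via $a(x)\sim 1/\lambda(x)$ together with Assumption B4 (Assumption B). The only cosmetic difference is the choice of $B_b$: the paper uses the increment-dependent envelope $|\tilde S_j-j\mu|\le\max(\delta_b^{-1},\delta_b j)$ with $\delta_b=1/\log b$, which follows directly from the SLLN, whereas your uniform bound $\sup_{n\le T(b)}|\tilde S_n-n\mu|\le\delta_b$ with $\delta_b=o(a(b))$ requires a maximal inequality (or heavy-tailed FCLT) to justify $Q^{\ast}(B_b)\to1$, especially when $\iota\in(1,2)$; this is a minor point and both formulations work.
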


\begin{proof}
[Proof of Theorem \ref{ThmTVD}]Thanks to Theorem \ref{ThmTV}, the distribution of $\{\hat S_{n}: 1\leq n
\leq\tau_{b}\}$ under $Q^{*}$ converges in total variation to the
distribution of $\{S_{n}: 1\leq n \leq\tau_{b}\}$ given $\tau_{b}<\infty$
under $P$. It is sufficient to show the limit theorem of $\{\hat S_{n}:
1\leq n \leq\tau_{b}\}$ under $Q^{*}$.

Thanks to Proposition \ref{PropCoup}, we are able to construct a random
variable $Z_{\theta }$ following the distributions stated in Proposition \ref%
{PropCoup} such that $Z_{\theta }$ is independent of $\tilde{S}$ and
\begin{equation*}
\frac{N_{b}}{a(b)}-\frac{Z_{\theta }}{|\mu |}\rightarrow 0,
\end{equation*}%
almost surely as $b\rightarrow \infty $. Thanks to Lemma \ref{LemJump}, we
have that%
\begin{equation*}
\left( \frac{N_{b}}{a(b)},\left\{ \frac{\tilde{S}_{tN_{b}}-t\mu N_{b}}{\sqrt{%
N_{b}}}\right\} _{0\leq t<1},\frac{\hat{S}_{N_{b}}-b}{a(b)}\right) -\left(
\frac{\tau _{b}}{a(b)},\left\{ \frac{\hat{S}_{t\tau _{b}}-t\mu \tau _{b}}{%
\sqrt{\tau _{b}}}\right\} _{0\leq t<1},\frac{\hat{S}_{\tau _{b}}-b}{a(b)}%
\right) \rightarrow 0
\end{equation*}%
in probability as $b\rightarrow \infty $ (in fact, the convergence holds for
almost every $\tilde{S}$ in the sequence $B_{b}$). Further, as $b\rightarrow
\infty $,\ we can let $\theta \rightarrow -\mu /2$. So it is possible to
construct a random variable $Y_{0}$ independent of $\tilde{S}$ and following
distribution stated in the theorem such that
\begin{equation*}
Z_{\theta }\rightarrow Y_{0},
\end{equation*}%
almost surely as $b\rightarrow \infty $. Now, using a standard strong
approximation result (see for instance \cite{EKM97}) we can (possibly by
further enlarging the probability space) assume that
\begin{equation}
\tilde{S}_{\left\lfloor t\right\rfloor }=\mu t+\sigma B\left( t\right)
+e\left( t\right)   \label{S_q}
\end{equation}%
where $e\left( \cdot \right) $ is a (random) function such that%
\begin{equation*}
\frac{e\left( xt\right) }{t^{1/2}}\longrightarrow 0
\end{equation*}%
with probability one uniformly on compact sets on $x\geq 0$ as $t\nearrow
\infty $. Therefore, we have that%
\begin{equation*}
\frac{\tilde{S}_{tN_{b}}-t\mu N_{b}}{\sqrt{N_{b}}}=\frac{\sigma B\left(
ta(b)Y_{0}/|\mu |+ta(b)\xi _{b}\right) +e_{b}\left( ta(b)Y_{0}/|\mu
|+ta(b)\xi _{b}\right) }{\sqrt{a(b)Y_{0}/|\mu |+a(b)\xi _{b}}},
\end{equation*}%
where $\xi _{b}\rightarrow 0$ as $b\rightarrow \infty $. For $\delta $
arbitrarily small, we now verify that for each $z>\delta $,
\begin{equation*}
\sup_{0\leq u\leq 1}\left\vert \frac{B\left( ua(b)z+ua(b)\xi _{b}\right)
-B\left( ua(b)z\right) }{\sqrt{a(b)z}}\right\vert \longrightarrow 0,
\end{equation*}%
as $a(b)\rightarrow \infty $. Given $\xi _{b}\rightarrow 0$ in probability,
it suffices to bound the quantity%
\begin{equation*}
\sup_{u,s\in (0,1),\left\vert u-s\right\vert \leq \varepsilon /\delta
}\left\vert \frac{B\left( ua(b)z\right) -B\left( sa(b)z\right) }{\sqrt{a(b)z}%
}\right\vert .
\end{equation*}%
By the invariance principle the previous quantity equals in distribution to%
\begin{equation*}
\sup_{u,s\in (0,1),\left\vert u-s\right\vert \leq \varepsilon /\delta
}\left\vert B\left( u\right) -B\left( s\right) \right\vert ,
\end{equation*}%
which is precisely the modulus of continuity of Brownian motion evaluated $%
\varepsilon /\delta $. By continuity of Brownian motion, its modulus of
continuity goes to zero almost surely as $\varepsilon \longrightarrow 0$.
Consequently, we obtain%
\begin{equation*}
\left( \frac{N_{b}}{a(b)},\left\{ \frac{\tilde{S}_{tN_{b}}-t\mu N_{b}}{\sqrt{%
N_{b}}}\right\} _{0\leq t<1},\frac{\hat{S}_{N_{b}}-b}{a(b)}\right) -\left(
\frac{Y_{0}}{|\mu |},\left\{ \frac{\tilde{S}_{ta(b)Y_{0}/|\mu |}+ta(b)Y_{0}}{%
\sqrt{a(b)Y_{0}/|\mu |}}\right\} _{0\leq t<1},\frac{\hat{S}_{N_{b}}-b}{a(b)}%
\right) \Longrightarrow 0.
\end{equation*}%
Because $Y_{0}$ is independent of $\tilde{S}$, using the invariance
principle for Brownian motion, we have that
\begin{equation*}
\left( \frac{Y_{0}}{|\mu |},\left\{ \frac{\tilde{S}_{ta(b)Y_{0}/|\mu
|}+ta(b)Y_{0}}{\sqrt{a(b)Y_{0}/|\mu |}}\right\} _{0\leq t<1},\frac{\hat{S}%
_{N_{b}}-b}{a(b)}\right) \Rightarrow \left( \frac{Y_{0}}{|\mu |},\left\{
\sigma B(t)\right\} _{0\leq t<1},Y_{1}\right) .
\end{equation*}%
Now, we figure out the joint distribution between $Y_{0}$ and $Y_{1}$. Note
that $\hat{S}_{N_{b}}-b$ satisfies
\begin{equation*}
\frac{\hat{S}_{N_{b}}-b}{a(b)}=\frac{\hat{X}_{N(b)}+\tilde{S}_{N(b)-1}-b}{%
a(b)}.
\end{equation*}%
In turn, we have,
\begin{equation*}
\frac{\tilde{S}_{N\left( b\right) -1}}{a\left( b\right) }+Y_{0}\rightarrow 0
\end{equation*}%
in probability. In addition, the conditional distribution of $\hat{X}_{N(b)}$
given $\tilde{S}_{N\left( b\right) -1}$ is asymptotically (as $b\rightarrow
\infty $) that of $\tilde{X}$ given that $\tilde{X}>b-\tilde{S}_{N\left(
b\right) -1}$ and $\tilde{S}_{N\left( b\right) -1}$, where $\tilde{X}$ is a
random variable with density $f\left( \cdot \right) $ independent of $\tilde{%
S}_{N\left( b\right) -1}$. Therefore, the law of $(\hat{X}_{N\left( b\right)
}+\tilde{S}_{N\left( b\right) -1}-b)/a\left( b\right) $ given $\tilde{S}%
_{N\left( b\right) -1}$ can be approximated by that of $\tilde{X}/a\left(
b\right) -Y_{0}-b/a\left( b\right) $ given $Y_{0}$ and $\tilde{X}%
-Y_{0}a\left( b\right) >b$.

In the setting of Assumptions B1-B4, we establish in the proof of
Proposition \ref{PropCoup} that $a\left( b\right) =\left( 1+o\left( 1\right)
\right) /\lambda\left( b\right) $ as $b\nearrow\infty$. Because of
Assumption B1 we have that $a\left( b\right) =o\left( b\right) $. Because of
Assumption B4 we have that for each $y>0$
\begin{equation}
Q^*(\tilde{X}>ya\left( b\right) +Y_{0}a\left( b\right) +b|\tilde{X}%
>b+Y_{0}a\left( b\right) ,Y_{0})\rightarrow P\left( Y_{1}>y\right)
=\exp\left( -y\right)  \label{Ea}
\end{equation}
as $b\nearrow\infty$. Hence, $Y_1$ is an exponential random variable with
expectation one and is independent of $Y_0$.

Now, suppose that Assumption A holds. We have that $a\left( b\right)
=b/\left( \iota-1\right) +o\left( b\right) $ as $b\nearrow\infty$. Therefore,%
\begin{align*}
& Q^*(\tilde{X}-(Y_{0}a\left( b\right) +b)>ya\left( b\right) |\tilde{X}%
>Y_{0}a\left( b\right) +b,Y_{0}=y_0) \\
& =\left( 1+o\left( 1\right) \right) Q^*\left(\tilde{X}-(Y_{0}+\iota-1)a\left(
b\right) >ya\left( b\right) |\tilde{X}>(Y_{0} +\iota-1)a\left( b\right)
,Y_{0}=y_0\right) \\
& \longrightarrow P\left( W>y/(y_{0}+\iota-1)\right) ,
\end{align*}
where
\begin{equation*}
P\left( W>t\right) =(1+t)^{-\iota}
\end{equation*}
for $t\geq0$. Now we need to verify that the law of $\left(
Y_{0},Y_{1}\right) $ as stated in the theorem coincides with that of $%
(Y_{0},W[Y_{0}+(\iota-1)])$. First we note that the joint density of $\left(
Y_{0},Y_{1}\right) $ is given by%
\begin{equation*}
\frac{\partial^{2}}{\partial y_{0}\partial y_{1}}P\left(
Y_{0}>y_{0},Y_{1}>y_{1}\right) =\frac{\iota}{\iota-1}\left(
1+(y_{0}+y_{1})/\left( \iota-1\right) \right) ^{-\iota-1}.
\end{equation*}
Therefore,
\begin{equation*}
\frac{P\left( Y_{1}\in dy_{1}|Y_{0}=y_{0}\right) }{dy_{1}}\propto\left(
\iota-1+y_{0}+y_{1}\right) ^{-\iota-1}.
\end{equation*}
On the other hand,
\begin{equation*}
\frac{P(W[y_{0}+(\iota-1)]\in dy_{1})}{dy_{1}}=\iota\left(
1+y_{1}/[y_{0}+\left( \iota-1\right) ]\right) ^{-\iota-1}\propto\left(
\iota-1+y_{0}+y_{1}\right) ^{-\iota-1}.
\end{equation*}
The independence between $B(t)$ and $(Y_0,Y_1)$ is straightforward. This
concludes the proof of the theorem.
\end{proof}

\bigskip

\section{Implementation and examples}

\label{SecSim}

We implemented the algorithm and compare the performance with other existing
algorithms in literature. In particular, we investigated two cases: regularly
varying distribution and Weibull like distribution.

\paragraph{Regularly varying distribution.}

We consider the increment has the following representation.
\[
X_{i}=V_{i}-T_{i},
\]
where $V_{i}$ are i.i.d. with distribution that $P(V_{i}>v)=(1+v)^{-2.5}$ for
$v>0$ and $T_{i}$'s are i.i.d. exponential random variables with expectation
$4/3$. It is not hard to verify that $E(X_{i})=-2/3$. In fact, this
corresponds to the tail probability of the steady-state waiting time of an
$M/G/1$ queue. There are a few provably efficient algorithms in literature
including. Asmussen and Kroese (2006) (AK) \cite{AsmKro06}, and Dupuis, Leder
and Wang (2006) (DLW) \cite{DupLedWang07} proposed efficient rare-event
simulation estimators for geometric sums of regularly varying random
variables. Blanchet and Glynn (2008) (BG) \cite{BlaGly07}, and Blanchet,
Glynn, and Liu (2007) (BGL) \cite{BGL07} proposed estimators for the tail of
the steady state $G/G/1$ waiting time. Table 1 compares the performance of
these algorithms. We use BL to denote the algorithm proposed in the current
paper, with one cut-off point $c_{0}=0.9(b-s)$.

\begin{table}[ht]
\begin{center}%
\begin{tabular}
[c]{|c|c|c|c|}\hline
$%
\begin{array}
[c]{c}%
\text{\lbrack Estimation]}\\
\text{\lbrack Std. Error]}%
\end{array}
$ & $b=10^{2}$ & $b=10^{3}$ & $b=10^{4}$\\\hline
BL & $%
\begin{array}
[c]{c}%
1.047e-03\\
3.76e-05
\end{array}
$ & $%
\begin{array}
[c]{c}%
3.175e-05\\
2.602e-07
\end{array}
$ & $%
\begin{array}
[c]{c}%
9.877e-07\\
8.187e-09
\end{array}
$\\\hline
AK & $%
\begin{array}
[c]{c}%
1.199e-03\\
1.479e-05
\end{array}
$ & $%
\begin{array}
[c]{c}%
3.145e-05\\
2.186e-07
\end{array}
$ & $%
\begin{array}
[c]{c}%
9.980e-07\\
6.945e-09
\end{array}
$\\\hline
BG & $%
\begin{array}
[c]{c}%
1.079e-03\\
5.968e-06
\end{array}
$ & $%
\begin{array}
[c]{c}%
3.146e-05\\
9.725e-08
\end{array}
$ & $%
\begin{array}
[c]{c}%
9.980e-07\\
2.073e-09
\end{array}
$\\\hline
BGL & $%
\begin{array}
[c]{c}%
1.022e-03\\
3.835e-05
\end{array}
$ & $%
\begin{array}
[c]{c}%
3.167e-05\\
1.598e-06
\end{array}
$ & $%
\begin{array}
[c]{c}%
1.128e-06\\
7.280e-08
\end{array}
$\\\hline
DLW & $%
\begin{array}
[c]{c}%
1.046e-03\\
5.195e-06
\end{array}
$ & $%
\begin{array}
[c]{c}%
3.163e-05\\
1.694e-07
\end{array}
$ & $%
\begin{array}
[c]{c}%
9.905e-07\\
2.993e-09
\end{array}
$\\\hline
\end{tabular}
\end{center}
\caption{Estimated tail probabilities of regularly varying random walks}%
\label{TabReg}%
\end{table}

\paragraph{Weibull-type distribution}

For the Weibull-type case, we consider the increment to have the following
distribution,
\[
P(X>x) = e^{-2\sqrt{t+1}},
\]
for $t\geq-1$ and $EX_{i} = -\frac1 2$. Table \ref{TabWei} compares the
algorithm in this paper (BL) and that of Blanchet and Glynn (2008) (BG). For
the implementation, we choose that $c_{0} = \sqrt{b-s}$, $c_{1} = 0.1(b-s)$,
$c_{2} = 0.5(b-s)$, $c_{3} = 0.9(b-s)$, $c_{4} = b-s -\sqrt{b-s}$.

\begin{table}[ht]
\begin{center}%
\begin{tabular}
[c]{|c|c|c|c|}\hline
$%
\begin{array}
[c]{c}%
\text{\lbrack Estimation]}\\
\text{\lbrack Std. Error]}%
\end{array}
$ & $b=250$ & $b=500$ & $b=650$\\\hline
BL & $%
\begin{array}
[c]{c}%
6.985e-13\\
5.639e-14
\end{array}
$ & $%
\begin{array}
[c]{c}%
1.778e-18\\
1.936e-19
\end{array}
$ & $%
\begin{array}
[c]{c}%
3.900e-21\\
5.696e-22
\end{array}
$\\\hline
BG & $%
\begin{array}
[c]{c}%
7.076e-13\\
1.20e-14
\end{array}
$ & $%
\begin{array}
[c]{c}%
1.897e-18\\
5.083e-20
\end{array}
$ & $%
\begin{array}
[c]{c}%
3.971e-21\\
7.95e-23
\end{array}
$\\\hline
\end{tabular}
\end{center}
\caption{Estimated tail probabilities of the Weibull-type distribution}%
\label{TabWei}%
\end{table}

\appendix

\section{Technical proofs in Sections \ref{SecPre} and \ref{SectionAnalysis}}

\label{SecTech}

\begin{proof}
[Proof of Lemma \ref{LemRateFun}]Observe that B2 implies $\log(\Lambda\left(
x\right)  /\Lambda\left(  b_{0}\right)  )\leq\log((x/b_{0})^{\beta_{0}})$. In
other words, $\Lambda\left(  x\right)  \leq\Lambda\left(  b_{0}\right)
b_{0}^{-\beta_{0}}x^{\beta_{0}}$. Consequently, substituting into B2 we have
that for $x\geq b_{0}$%
\[
\lambda\left(  x\right)  \leq\beta_{0}\Lambda\left(  x\right)  /x\leq\beta
_{0}\Lambda\left(  b_{0}\right)  b_{0}^{-\beta_{0}}x^{\beta_{0}-1}=O\left(
x^{\beta_{0}-1}\right).
\]
\end{proof}

\bigskip

\begin{proof}
[Proof of Lemma \ref{LemIntTail}]First, since $G\left(  \cdot\right)  $ is
decreasing then for $x\leq b-\Lambda^{-1}(\Lambda(b)-a_{\ast})$
\[
\frac{G(b-x)}{G(b)}\leq\frac{G(\Lambda^{-1}\left(  \Lambda\left(  b\right)
-a_{\ast}\right)  )}{G(b)}.
\]
By continuity of $G\left(  \cdot\right)  $ it suffices to show that the right
hand side is bounded for all $b$ sufficiently large. Using L'Hopital's rule we
conclude that%
\[
\frac{G(\Lambda^{-1}\left(  \Lambda\left(  b\right)  -a_{\ast}\right)
)}{G(b)}\sim\frac{\exp\left(  -\Lambda\left(  b\right)  +a_{\ast}\right)
}{\exp\left(  -\Lambda\left(  b\right)  \right)  }\left.  \frac{d}{dx}%
\Lambda^{-1}\left(  \Lambda\left(  x\right)  -a_{\ast}\right)  \right\vert
_{x=b}.
\]
Now, note that for all $x\geq b_{0}$%
\[
\frac{d}{dx}\Lambda^{-1}\left(  \Lambda\left(  x\right)  -a_{\ast}\right)
=\frac{\lambda\left(  x\right)  }{\lambda\left(  \Lambda^{-1}\left(
\Lambda\left(  x\right)  -a_{\ast}\right)  \right)  }\leq\frac{\lambda\left(
x\right)  }{\lambda\left(  \Lambda^{-1}\left(  \Lambda\left(  x\right)
\right)  \right)  }=1.
\]
The inequality follows from the fact that $\lambda\left(  \cdot\right)  $ is
non increasing and $a_{\ast}>0$. This allows to conclude the statement of the lemma.
\end{proof}

\bigskip

\begin{proof}
[Proof of Lemma \ref{LemIntTail1}]The second part assuming that $\bar{F}%
(\cdot)$ is regularly varying follows from Karamata's theorem. Now, for
non-regularly varying part, we simply note using L'Hopital's rule and Lemma
\ref{LemRateFun},
\[
\lim_{x\rightarrow\infty} \frac{\bar F(x)}{G(x)} = \lim_{x\rightarrow\infty}
\lambda(x) =0.
\]
The lower bound follows immediately. Again, using L'Hopital's rule, the upper
bound then follows from the fact that
\[
\lim_{x\rightarrow\infty} \frac{x\bar F(x)}{G(x)} =\lim_{x\rightarrow\infty}
\frac{x\lambda(x)\bar F(x)- \bar F(x)}{\bar F(x)} = \infty.
\]
The last step is thanks to Assumption B1.

\end{proof}

\bigskip

\begin{proof}
[Proof of Lemma \ref{LemCen}]This is a direct application of condition B2.
Indeed, if $x\geq b_{0}>0$ and $y\geq0$%
\[
\log\Lambda(x+y)-\log\Lambda(x)=\int_{x}^{x+y}\partial\log\Lambda\left(
t\right)  dt\leq\int_{x}^{x+y}\beta_{0}t^{-1}dt=\beta_{0}\log\left(
\frac{x+y}{x}\right)  ,
\]
which is equivalent to the statement of the lemma.
\end{proof}

\bigskip

\begin{proof}
[Proof of Lemma \ref{LemOvershoot}]Equivalently, we must show that for $x$
sufficiently large%
\[
a_{\ast}\geq\Lambda(x)-\Lambda\left(  x-x^{\alpha}\right)  ,
\]
where $\alpha=(1-\beta_{0})/2$. Now, note using Lemma \ref{LemCen} that%
\[
\Lambda(x)-\Lambda\left(  x-x^{\alpha}\right)  \leq\Lambda\left(  x-x^{\alpha
}\right)  \left(  \frac{\Lambda(x)}{\Lambda\left(  x-x^{\alpha}\right)
}-1\right)  \leq\Lambda\left(  x-x^{\alpha}\right)  \left(  \left(  \frac
{x}{x-x^{\alpha}}\right)  ^{\beta_{0}}-1\right)  .
\]
For all $x$ sufficiently large, using a Taylor expansion, the right hand side
is bounded by $\Lambda\left(  x-x^{\alpha}\right)  \left(  2\beta_{0}%
x^{\alpha-1}\right)  $. Consequently, once again applying Lemma \ref{LemCen}
we conclude that%
\[
\Lambda(x)-\Lambda\left(  x-x^{\alpha}\right)  \leq\Lambda\left(  x-x^{\alpha
}\right)  \left(  2\beta_{0}x^{\alpha-1}\right)  \leq4\beta_{0}\Lambda\left(
b_{0}\right)  x^{\beta_{0}-1 + \alpha}
\]
The right hand side goes to zero as $x\nearrow\infty$ given our selection of
$\alpha$ and therefore is less than $a_{\ast}$ for all $x$ sufficiently large
as required.
\end{proof}

\bigskip

\begin{proof}
[Proof of Lemma \ref{LemSubExp}]If Assumption A is satisfied then it is well
known that both $F$ and $G$ are subexponential. Let us then assume that B2
holds, and then we obtain $x\lambda\left(  x\right)  \leq\beta_{0}%
\Lambda\left(  x\right)  $ for all $x\geq b_{0}$ and $\beta_{0}\in\left(
0,1\right)  $. Applying Pitman's criterion (Proposition \ref{PropPitmanC}) and
the fact that (by Lemma \ref{LemRateFun} in particular $\lambda\left(
x\right)  =O\left(  1\right)  $ for $x\geq b_{0}$) it suffices to verify that%
\[
\int_{b_{0}}^{\infty}\exp\left(  x\lambda\left(  x\right)  -\Lambda\left(
x\right)  \right)  dx<\infty.
\]
Nevertheless, combining B1 and B2 we have that there exists $c\in\left(
0,\infty\right)  $ such that%
\[
\int_{b_{0}}^{\infty}\exp\left(  x\lambda\left(  x\right)  -\Lambda\left(
x\right)  \right)  dx\leq\int_{b_{0}}^{\infty}e^{\left(  \beta_{0}-1\right)
\Lambda\left(  x\right)  }dx\leq c\int_{b_{0}}^{\infty}x^{-2}dx<\infty
\]
and we conclude the lemma.

For the subexpontentiality of the integrated tail, it is sufficient to show
that
\[
\limsup_{x\rightarrow\infty} \frac{x\bar F(x)}{-G(x) \log G(x)} < 1,
\]
and apply the same analysis for the subexponentiality of $\bar F$. By
L'Hopital's rule (possibly on a subsequence),
\[
\limsup_{x\rightarrow\infty} \frac{x\bar F(x)}{-G(x) \log G(x)} \leq
\limsup_{x\rightarrow\infty} -\frac{x\lambda(x) - 1}{1+\log G(x)} \leq
\limsup_{x\rightarrow\infty} -\frac{x\lambda(x) - 1}{\log\varepsilon+ \log x
-\Lambda(x)}\leq\beta_{0}%
\]
The second inequality is due to Lemma \ref{LemIntTail1}. The last inequality
is from the fact that $\log x = o(\Lambda(x))$ and Assumptions B1 and B2.
$\bar F(x)/G(x)$ and $-\log G(x)$ are the hazard function and cumulative hazard function of
the integrated tail. The proof is completely analogous and therefore is omitted.
\end{proof}

\bigskip

\begin{proof}
[Proof of Lemma \ref{LemWeibull}] Given $\beta_0\in(0,1)$, one can always select $\sigma_1$ as indicated in the statement of the lemma. Note that there exists a $\delta>0$ such that for all
$\sigma_{1}\leq x\leq1-\sigma_{1}$
\[
x^{\beta_0}+(1-x)^{\beta_0}\geq1+\delta.
\]
So, by continuity and with $\sigma_1$ small enough, we can find $\sigma_{2}>0$ small
enough so that
\[
x^{\beta_0}+(1-x-\sigma_{1}/2)^{\beta_0}\geq1+\sigma_{2}.
\]
Therefore, we know that we can select
\[
a_{j}= a_{j-1}+\sigma_{1}/2,
\]
as long as $\sigma_{1}/2\leq a_{j-1}\leq1-\sigma_{1}/2$. Now select
$k=\lceil2(1-\sigma_{1})/\sigma_{1}\rceil$ and we have $a_{k}\geq1-\sigma
_{1}/2$.
\end{proof}

\section{Technical proofs in Section \ref{SecTV}}

\label{ApdTotal}

\begin{proof}
[Proof of Lemma \ref{LemDensDec}]First it is straightforward to verify
(\ref{decomp}) out of definition (\ref{Mix}).
By integrating both sides of \eqref{decomp}, it is also immediate to see
$$\int_{-\infty}^{\infty} q^*_s(x)dx =1.$$
Now, we just need to verify that
if $p\left(  s\right)  <1$ then $(1-p(s))q_{s}^{\ast}(x)\geq0$.
We concentrate on the case in which Assumption B
prevails (if Assumption A is in force the arguments carry over in very similar
forms). When $b-s > \eta_*$, using the definition of $q_{s}\left(  x\right)  $ given in Section
\ref{SubMixFam} we obtain%
\begin{align*}
q_{s}\left(  x\right)   &  =p_{\ast}f(x)\frac{I(x\leq c_{0})}{P(X\leq c_{0}%
)} +p_{\ast\ast}f_{\ast\ast}(x|s)+\sum_{j=1}^{k}p_{j}f_{j}(x|s) \\
&  =\frac{p_{\ast}f(x)}{P(X\leq c_{0}%
)}\\
&  +\frac{p_{\ast\ast}f(x)I(x>c_{k})}{P(X>c_{k})}-\frac{p_{\ast}f\left(
x\right)  I(x>c_{k})}{P\left(  X\leq c_{0}\right)  }\\
&  +\frac{p_{k}f(b-s-x)I(x\in(c_{k-1},c_{k}])}{P(X\in(b-s-c_{k},b-s-c_{k-1}%
])}-\frac{p_{\ast}f\left(  x\right)  I(x\in(c_{k-1},c_{k}])}{P\left(  X\leq
c_{0}\right)  }\\
&  +\sum_{j=1}^{k-1}\left(  \frac{p_{j}f(x)I(x\in(c_{j-1},c_{j}])}%
{P(X\in(c_{j-1},c_{j}])}-\frac{p_{\ast}f\left(  x\right)  I(x\in(c_{j-1}%
,c_{j}])}{P\left(  X\leq c_{0}\right)  }\right)  .
\end{align*}
Therefore,%
\begin{align}
\left(  1-p\left(  s\right)  \right)  q_{s}^{\ast}\left(  x\right)   &
=\frac{p_{\ast\ast}f(x)I(x>c_{k})}{P(X>c_{k})}-\frac{p_{\ast}f\left(
x\right)  I(x>c_{k})}{P\left(  X\leq c_{0}\right)  }\nonumber\\
&  +\frac{p_{k}f(b-s-x)I(x\in(c_{k-1},c_{k}])}{P(X\in(b-s-c_{k},b-s-c_{k-1}%
])}-\frac{p_{\ast}f\left(  x\right)  I(x\in(c_{k-1},c_{k}])}{P\left(  X\leq
c_{0}\right)  }\nonumber\\
&  +\sum_{j=1}^{k-1}\left(  \frac{p_{j}f(x)I(x\in(c_{j-1},c_{j}])}%
{P(X\in(c_{j-1},c_{j}])}-\frac{p_{\ast}f\left(  x\right)  I(x\in(c_{j-1}%
,c_{j}])}{P\left(  X\leq c_{0}\right)  }\right)  .\label{q}%
\end{align}
To verify that $(1-p(s))  q_{s}^{\ast}(x)  \geq0$, the most interesting part involves the second line in the above display
corresponding to the interval $x\in(c_{k-1},c_{k}]$. The reasoning for the
rest of the pieces is similar and therefore is omitted. On the
interval $(c_{k-1},c_{k}]$ we have that $b-s-x\leq x$ assuming that
$b-s\geq\eta_{\ast}$ and $\eta_{\ast}$ is sufficiently large.
Since $f\left(  \cdot\right)  $ is eventually decreasing (a consequence of
Assumption B3), then
\[
f\left(  b-s-x\right)  \geq f\left(  x\right),
\]
when $x\in(c_{k-1},c_{k}]$. Consequently
\begin{align*}
&  \frac{p_{k}f(b-s-x)I(x\in(c_{k-1},c_{k}])}{P(X\in(b-s-c_{k},b-s-c_{k-1}%
])}-\frac{p_{\ast}f\left(  x\right)  I(x\in(c_{k-1},c_{k}])}{P\left(  X\leq
c_{0}\right)  }\\
&  \geq\frac{p_{k}f\left(  x\right)  I(x\in(c_{k-1},c_{k}])}{P(X\in
(b-s-c_{k},b-s-c_{k-1}])}-\frac{p_{\ast}f\left(  x\right)  I(x\in
(c_{k-1},c_{k}])}{P\left(  X\leq c_{0}\right)  }.
\end{align*}
Further, we have that $p_{k}=\varepsilon^{2}p_{\ast\ast}$ decreases to zero at
most linearly in $(b-s)^{-1}$, whereas $P(X\in(b-s-c_{k},b-s-c_{k-1}])$ goes to zero
faster than any linear function of  $(b-s)^{-1}$. Therefore, $\left(  1-p\left(  s\right)  \right)
q_{s}^{\ast}\left(  x\right)  I\left(  x\in(c_{k-1},c_{k}]\right)  \geq0$. The
remaining pieces in (\ref{q}) are handled similarly.
\end{proof}

\bigskip

\begin{proof}
[Proof of Lemma \ref{LemN}]Note that%
\begin{equation}
Q^{\ast}\left(  N_{b}>kb\right)  =E^{Q^{\ast}}\left(  \prod_{j=0}^{\left\lceil
kb\right\rceil }p(\tilde{S}_{j})\right)  , \label{Qs0}%
\end{equation}
where $p(s)$ is defined in \eqref{DefpsD}. In addition, for some $\varepsilon>0$,%
\begin{align}
E^{Q^{\ast}}\left(  \prod_{j=0}^{\left\lceil kb\right\rceil }p(\tilde
{S}_{j})\right)   &  \leq E^{Q^*}\left(  \prod_{j=0}^{\left\lceil kb\right\rceil
}p(\tilde{S}_{j})I(|\tilde{S}_{j}-\mu j|\leq\varepsilon\max\{j,b\})\right)
\nonumber\\
&  +Q^{\ast}\left(  \sup_{j=1}^{\left\lceil kb\right\rceil }|\tilde{S}_{j}-\mu
j|-\varepsilon\max\{j,b\}>0\right)  . \label{Qs1}%
\end{align}
Notice that for any $\varepsilon>0$,%
\[
\lim_{b\rightarrow\infty}Q^{\ast}\left(  \sup_{j=1}^{\left\lceil
kb\right\rceil }[|\tilde{S}_{j}-\mu j|-\varepsilon\max\{j,b\}]>0\right)  =0.
\]
Then, for some $K$ sufficiently large (using an argument similar to that given
in the proof of Proposition \ref{PropCoup}) we conclude
\[
E^{Q^*}\left(  \prod_{j=0}^{\left\lceil kb\right\rceil }p(\tilde{S}%
_{j})I(|\tilde{S}_{j}-\mu j|\leq\varepsilon\max\{j,b\})\right)  \leq
Kk^{-\varepsilon_0}.
\]
for some $\varepsilon_0$ small enough. This is because $1-p(s) = (1+o(1))p_{**}$ as $b-s\rightarrow \infty$ and $\varepsilon \rightarrow 0$.
Thereby, we conclude the proof applying the previous two estimates into
(\ref{Qs0}) and (\ref{Qs1}).
\end{proof}

\begin{proof}
[Proof of Lemma \ref{LemJump}]Let%
\[
\int_{b-s}^{\infty}q_{s}^{\ast}(x)dx=R\left(  s\right)  .
\]
Note that for $b-s>\eta_{\ast}$ we have that
\begin{equation}
R\left(  s\right)  =O\left(  \varepsilon\right)  +e^{-a_{\ast\ast}}.
\label{ObsAux1}%
\end{equation}

Let

\[
\tau_{b}^{\prime}=\inf\{n\geq1: \tilde S_{n}\geq b\}.
\]

Now observe that%
\begin{align*}
Q^{\ast}\left(  \tau_{b}=N_{b}\right)   &  =\sum_{k=1}^{\infty}Q^{\ast}%
(N_{b}=k,\hat{S}_{k}>b,\tau_{b}>k-1)\\
&  \geq\sum_{k=1}^{\infty}Q^{\ast}(N_{b}=k,\hat{S}_{k}>b,\tau_{b-\eta_{\ast}%
}^{\prime}>k-1).
\end{align*}
Because of (\ref{ObsAux1}) we obtain that%
\begin{align*}
\sum_{k=1}^{\infty}Q^{\ast}(N_{b}  &  =k,\hat{S}_{k}>b,\tau_{b-\eta_{\ast}%
}^{\prime}>k-1)\\
&  \geq(O(\varepsilon)+e^{-a_{\ast\ast}})\sum_{k=1}^{\infty}Q^{\ast}%
(N_{b}=k,\tau_{b-\eta_{\ast}}^{\prime}>k-1)\\
&  =(O(\varepsilon)+e^{-a_{\ast\ast}})Q^{\ast}(\tau_{b-\eta_{\ast}}^{\prime
}>N_{b}-1,N_{b}<\infty)\\
&  \geq(O(\varepsilon)+e^{-a_{\ast\ast}}+o(1))Q^{\ast}(\tau_{b-\eta_{\ast}%
}^{\prime}=\infty).
\end{align*}
The term $o(1)\rightarrow0$ as $b\rightarrow\infty$ comes from Lemma
\ref{LemN} which shows that $Q^{\ast}(N_{b}=\infty)=o\left(  1\right)  $ as
$b\rightarrow\infty$. Finally, we observe%
\[
Q^{\ast}(\tau_{b-\eta_{\ast}}^{\prime}=\infty)=1-u(b-\eta_{\ast}%
)\rightarrow1,
\]
as $b\rightarrow\infty$. The conclusion of this lemma follows.
\end{proof}

\bigskip

\begin{proof}
[Proof of Proposition \ref{PropCoup}]For $\delta_{b}=1/\log b$, define
\[
B_{b}=\{\tilde S:|\tilde S_{j}-j\mu|\leq\max(\delta_{b}^{-1},\delta
_{b}j),1\leq j\leq t a(b)\}.
\]
It is clear that $\lim _{b\rightarrow \infty}P(B_b)=1$.

\paragraph{If $F$ is regularly varying,}

note that $1-p(s) = (1+o(1))p_{**}$ as $b-s\rightarrow \infty $, $\varepsilon \rightarrow 0$. For all $\tilde{S}\in B_{b}$
\[
Q^{\ast}\left(  N_{b}>ta(b)|\tilde S\right)  =\prod_{j=0}%
^{\left\lfloor ta(b)\right\rfloor } p(\tilde S_j)
=(1+o(1))\exp\left\{  -\sum
_{j=0}^{\left\lfloor ta(b)\right\rfloor }2\theta\frac{\bar{F}(b+j\left\vert
\mu\right\vert )}{G(b+j\left\vert \mu\right\vert )}\right\}  .
\]
By Karamata's theorem we have that%
\[
\sum_{j=0}^{\left\lfloor ta(b)\right\rfloor }2\theta\frac{\bar{F}%
(b+j\left\vert \mu\right\vert )}{G(b+j\left\vert \mu\right\vert )}%
\rightarrow\frac{2\theta(\iota-1)}{| \mu|}\log\left(  1+\frac{|\mu|t}{\iota
-1}\right)  .
\]

\paragraph{If Assumptions B1-B4 hold,}
We clearly have that
\begin{align}
a(x) &  =\frac{G(x)}{\bar{F}(x)}\label{AA}\\
&  =\int_{0}^{\infty}P(X>x+t|X>x)dt\nonumber\\
&  =\frac{1}{\lambda(x)}\int_{0}^{\infty}P(X>x+t/\lambda(x)|X>x)dt.\nonumber
\end{align}
Now we can invoke Assumption B4 together with the dominated convergence
theorem to conclude that
\[
\int_{0}^{\infty}P(X>x+t/\lambda(x)|X>x)dt\longrightarrow\int_{0}^{\infty}%
\exp\left(  -t\right)  dt=1
\]
as $x\longrightarrow\infty$. In addition, by the fundamental theorem of
calculus we have that%
\[
\Lambda\left(  x+y/\lambda\left(  x\right)  \right)  -\Lambda\left(  x\right)
=\frac{y}{\lambda\left(  x\right)  }\int_{0}^{1}\lambda\left(  x+yu/\lambda
\left(  x\right)  \right)  du
\]
and, in view of this representation, Assumption B4 is equivalent to stating
that for each $K\in\left(  0,\infty\right)  $%
\begin{equation}
\lim_{x\rightarrow\infty}\sup_{0\leq y\leq K}\left\vert \int_{0}%
^{y/\lambda\left(  x\right)  }\lambda\left(  x+z\right)  dz-y\right\vert
=\lim_{x\rightarrow\infty}\sup_{0\leq y\leq K}\left\vert \int_{0}^{ya\left(
x\right)  }\frac{1}{a\left(  x+z\right)  }dz-y\right\vert =0.\label{BB}%
\end{equation}
Observe that, since $\lambda\left(  \cdot\right)  $ is eventually
non-increasing,%
\[
\sum_{j=0}^{\left\lfloor t/\lambda\left(  b\right)  \right\rfloor }%
\lambda\left(  b+\left(  j+1\right)  \left\vert \mu\right\vert \right)
\leq\int_{0}^{t/\lambda\left(  b\right)  }\lambda\left(  b+x\left\vert
\mu\right\vert \right)  dx\leq\sum_{j=0}^{\left\lfloor t/\lambda\left(
b\right)  \right\rfloor }\lambda\left(  b+j\left\vert \mu\right\vert \right)
.
\]
We then conclude that%
\[
0\leq \int_{0}^{t/\lambda\left(  b\right)  }\lambda\left(
b+x\left\vert \mu\right\vert \right)  dx-\sum_{j=0}^{\left\lfloor
t/\lambda\left(  b\right)  \right\rfloor }\lambda\left(  b+\left(  j+1\right)
\left\vert \mu\right\vert \right)  \leq\lambda\left(  b\right)
\longrightarrow0
\]
as $b\nearrow\infty$. Therefore, applying (\ref{AA}) and (\ref{BB}) we
conclude that%
\[
\lim_{b\rightarrow\infty}\sum_{j=0}^{\left\lfloor ta(b)\right\rfloor }%
\frac{\bar{F}(b+j|\mu|)}{G(b+j|\mu|)}=\lim_{b\rightarrow\infty}\int
_{0}^{t/\lambda\left(  b\right)  }\lambda\left(  b+x\left\vert \mu\right\vert
\right)  dx=t
\]
as $b\nearrow\infty$ and consequently we have that for all $\tilde{S}\in
B_{b}$
\[
Q^{\ast}\left(  N_{b}>ta(b)|\tilde{S}\right)  =(1+o(1))\exp\left\{
-\sum_{j=0}^{\left\lfloor ta(b)\right\rfloor }2\theta\frac{\bar{F}(b+j|\mu
|)}{G(b+j|\mu|)}\right\}  .
\]
We then conclude that%
\[
\lim_{b\rightarrow\infty}Q^{\ast}\left(  N_{b}>ta(b)|\tilde{S}\right)
=e^{-2\theta t}.
\]
\end{proof}

\bibliographystyle{plain}
\bibliography{bibprob,bibstat,mod_prob}

\end{document}